\newcommand\GreenL{\mathcal{L}}
\newcommand\GreenR{\mathcal{R}}
\newcommand\GreenH{\mathcal{H}}
\newcommand\GreenD{\mathcal{D}}
\newcommand\trop{\mathbb{T}}
\newcommand\type{\operatorname{type}}
\newcommand\ft{\mathbb{FT}}
\newcommand\pft{\mathbb{PFT}}
\newtheorem{theorem}{Theorem}[section]
\newtheorem*{theorem11}{Theorem 1.1}
\newtheorem*{theorem14}{Theorem 1.4}
\newtheorem*{theorem15}{Theorem 1.5}
\newtheorem{lemma}[theorem]{Lemma}
\newtheorem{corollary}[theorem]{Corollary}
\newtheorem{proposition}[theorem]{Proposition}
\theoremstyle{definition}
\theoremstyle{definition}
\begin{document}

\noindent{This is the author accepted manuscript of an article
published in \\ 
Advances in Mathematics, Vol.~303 (2016), pp.~1236--1263. \\
\texttt{http://dx.doi.org/10.1016/j.aim.2016.08.033}.}

\vspace{2ex}

\noindent{\copyright\ 2016. This manuscript version is made available under the CC-BY-NC-ND 4.0 licence \texttt{http://creativecommons.org/licences/by-nc-nd/4.0/}}

\vspace{3ex}

\title[Pure Dimension and Projectivity of Tropical Polytopes]{Pure Dimension and Projectivity of \\ Tropical Polytopes}

\maketitle

\begin{center}

ZUR IZHAKIAN\footnote{
Institute of Mathematics, University of Aberdeen, AB24 3UE,
UK. Email \texttt{zzur@abdn.ac.uk}. Research
supported by Israel Science Foundation grant number 448/09 and
by a Leibniz Fellowship at the \textit{Mathematisches Forschungsinstut Oberwolfach}.
 Zur
Izhakian also gratefully acknowledges the support of EPSRC grant EP/H000801/1
and the hospitality of the University of Manchester during
a visit to Manchester.},
MARIANNE JOHNSON\footnote{School of Mathematics, University of Manchester,
Manchester M13 9PL, UK. Email \texttt{Marianne.Johnson@maths.manchester.ac.uk}. Research supported by EPSRC Grant EP/H000801/1.}
and
MARK KAMBITES\footnote{School of Mathematics, University of Manchester,
Manchester M13 9PL, England. Email \texttt{Mark.Kambites@manchester.ac.uk}. Research supported by an RCUK Academic Fellowship and EPSRC Grant EP/H000801/1.
Mark Kambites gratefully acknowledges the hospitality of the \textit{Mathematisches Forschungsinstut Oberwolfach}
during a visit to Oberwolfach.}

\date{\today}
\keywords{}
\thanks{}

\end{center}

\date{\today}
\numberwithin{equation}{section}

\begin{abstract}
We study how geometric properties of tropical convex sets and polytopes,
which are of interest in many application areas, manifest themselves in
their algebraic structure as modules over the tropical semiring. Our main
results establish a close connection between \textit{pure dimension} of
tropical convex sets, and \textit{projectivity} (in the sense of ring
theory). These results lead to a geometric understanding of idempotency for
tropical matrices. 
As well as their direct
interest, our results suggest that there is substantial scope to apply
ideas and techniques from abstract algebra (in particular, ring theory)
in tropical geometry.
\end{abstract}

\section{Introduction}

Tropical mathematics can be loosely defined as the study of the real numbers
(sometimes augmented with $-\infty$) under the operations of
addition and maximum (or equivalently, minimum).
It has been an
active area of study in its own right since the 1970's
\cite{CuninghameGreen79} and also has well-documented applications in diverse
areas such as analysis of discrete event
systems, control theory, combinatorial optimisation and
scheduling problems \cite{MaxPlus95}, formal languages
and automata \cite{Pin98}, phylogenetics \cite{Eriksson05},
statistical inference \cite{Pachter04}, combinatorial/geometric
group theory \cite{Bieri84} and most recently in algebraic geometry
(see for example \cite{Itenberg09}). A key role in most of these areas is played by
\textit{tropically convex sets} and \textit{tropical polytopes}. These
subsets of tropical space are naturally endowed not only with a geometric
structure (as Euclidean polyhedral complexes), but also with a purely
algebraic structure (as modules over the tropical semiring).

In this paper we consider the way in which geometric properties of
convex sets and polytopes, of interest in many application areas, manifest
themselves in their algebraic structure. Our main results establish a close
connection between \textit{pure dimension} (a geometric property of interest for
applications of tropical methods) and \textit{projectivity} (in the sense
of ring theory and category theory). These results lead to a geometric
understanding of idempotency for tropical matrices. As a corollary, we obtain the
fact that all of the widely studied notions of
\textit{rank} for tropical matrices coincide where the matrices are idempotent or,
more generally, von Neumann regular. (This fact was mentioned without proof in
\cite[Fact 4, Section 35.7]{Akian06}, with reference given to a preprint of Cohen,
Gaubert and Quadrat which at the time of writing is still not available to the
public.)
As well as their direct interest,
these results suggest that there is substantial scope to apply ideas and techniques
from abstract algebra (in particular, ring theory) to understand problems
in tropical geometry.

We denote by $\ft$ the \textit{(finitary) tropical semiring}, which consists
of the real numbers under the operations of addition and maximum. We write
$a \oplus b$ to denote the maximum of $a$ and $b$, and $a \otimes b$ or just
$ab$ to denote the sum of $a$ and $b$. It is readily checked that both
operations are associative and commutative, $\otimes$ has a neutral element
($0$), admits inverses and distributes over $\oplus$, while $\oplus$ is
\textit{idempotent} ($a \oplus a = a$ for all $a$). These properties mean
that $\ft$ has the structure of an \textit{idempotent semifield (without
zero)}.

The space $\ft^n$ of tropical $n$-vectors admits natural operations of
componentwise maximum and the obvious scaling by $\ft$, which makes it
into an \textit{$\ft$-module}.
It also has a natural partial order. For
detailed definitions see Section~\ref{sec_prelim} below.
Submodules of $\ft^n$ play a vital role in tropical mathematics; as well
as their obvious algebraic importance, they have a geometric structure
in view of which they are usually called \textit{(tropical) convex sets}
or sometimes \textit{convex cones}. Particularly important are the finitely
generated convex sets, which are called \textit{(tropical) polytopes}.

There are several important notions of \textit{dimension} for convex sets.
The \textit{(affine) tropical dimension} is the topological dimension of the
set,
viewed as a subset of $\mathbb{R}^n$ with the usual topology. The
\textit{projective tropical dimension} (sometimes just called \textit{dimension}
in the algebraic geometry literature) is one less than the affine tropical
dimension. Note that,
in contrast to the classical (Euclidean) case, tropical convex sets may have regions
of different topological dimension. We say that a set $X$ has \textit{pure
(affine) dimension} $k$ if every open (within $X$ with the induced topology)
subset of $X$ has topological dimension $k$.
The \textit{generator dimension} (sometimes also called the \textit{weak dimension}) of a
convex set is the minimal cardinality of a generating subset, under the linear
operations of scaling and addition. The \textit{dual dimension} is the minimal
cardinality of a generating set under scaling and the induced operation of
\textit{greatest lower bound} within the convex set. We shall see later (Section~\ref{sec_dual}) that if a
convex set $X$ is the column space of a matrix, then its dual dimension is the generator
dimension of the row space, and also that the dual dimension of
$X$ is the minimum $k$ such that $X$ embeds linearly in $\ft^k$.

Since tropical convex sets also have the aspect of $\ft$-modules, it is
natural to ask about their algebraic structure. In particular, one might
ask whether important geometric and order-theoretic properties manifest
themselves in a natural way in their algebraic structure as modules,
and vice versa. If they do, this raises the twin possibilities of addressing
geometric problems involving polytopes by the use of (tropically linear)
algebraic methods and, conversely, using geometric intuition to provide
insight into problems in tropical linear algebra.

One of the most important properties in the study of modules is
\textit{projectivity}; recall that a module $P$ is called \textit{projective}
if every morphism from $P$ to another module $M$ factors through every
surjective module morphism onto $M$.
The main results of this paper characterise projectivity for tropical
polytopes, in terms of the geometric and order-theoretic
structure on these sets. Our most striking result gives a direct
connection between projective modules and the notions of dimension and
pure dimension discussed above:

\begin{theorem}\label{thm_geometric}
Let $X \subseteq \ft^n$ be a tropical polytope. Then $X$ is a projective $\ft$-module
if and only if it has pure dimension equal to its generator dimension and
its dual dimension.
\end{theorem}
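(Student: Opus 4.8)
The plan is to pass to a matrix description of $X$ and to recognise projectivity as the existence of an idempotent matrix whose column space is (isomorphic to) $X$. Fix an $n \times m$ matrix $A$ whose columns form a minimal generating set for $X$, so that $m$ is the generator dimension of $X$ and the map $\pi_A \colon \ft^m \to X$, $v \mapsto Av$, is a surjective module morphism. Since $X$ is finitely generated, it is projective precisely when $\pi_A$ splits, that is, when there is a module morphism $s \colon X \to \ft^m$ with $\pi_A \circ s = \mathrm{id}_X$; equivalently, $X$ is isomorphic to the column space $\mathrm{col}(E)$ of an idempotent tropical matrix $E \in \ft^{m \times m}$ (namely a matrix representing $s \circ \pi_A$), which in turn exhibits $X$ as a retract of the free module $\ft^m$. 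Granting this reformulation (together with the fact, recalled earlier, that retracts of free modules are projective), the argument divides into the two implications; the order-theoretic characterisation of projectivity in terms of sets that are simultaneously max-plus and min-plus closed is available as an auxiliary tool.

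For the forward implication, suppose $X$ is projective and write $X \cong \mathrm{col}(E)$ with $E$ idempotent. The morphism $s$ embeds $X$ linearly into $\ft^m$, so the dual dimension of $X$ is at most $m$; to see it equals $m$ one shows that for an idempotent tropical matrix the generator dimension of the column space equals the generator dimension of the row space. Since, as recalled in the excerpt, the dual dimension of $\mathrm{col}(E)$ is the generator dimension of the row space of $E$, this forces the dual dimension of $X$ to equal $m$, the generator dimension of $X$. This is in essence the collapse of the various notions of rank for idempotent matrices advertised in the abstract, and I would prove it by displaying the row and column spaces of $E$ as mutually dual via $E$ itself. It then remains to show $X$ has pure affine dimension $m$: near any point $p \in \mathrm{col}(E)$ the map $E \colon \ft^m \to \ft^m$ agrees with an ordinary affine-linear map on (the interior of) a cell, tropical linear maps being piecewise classically linear, and idempotency forces the image of this local map to be a classical affine piece of $\mathrm{col}(E)$ of dimension equal to the rank $m$; as $p$ was arbitrary, every (relatively) open subset of $X$ has dimension $m$.

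For the converse, assume $X$ has pure dimension, generator dimension and dual dimension all equal to $r$. Since $r$ is the least $k$ with $X$ linearly embeddable in $\ft^k$, and since projectivity and all three dimensions are invariant under module isomorphism, we may assume $X \subseteq \ft^r$ is full-dimensional (indeed of pure dimension $r$) and that $X = \mathrm{col}(A)$ for a \emph{square} matrix $A \in \ft^{r \times r}$ whose columns are a minimal generating set. The goal is now to split $\pi_A$, equivalently to show $A$ is von Neumann regular, $A B A = A$ for some $B$: then $E := AB$ is idempotent, $\mathrm{col}(E) = \mathrm{col}(ABA) = \mathrm{col}(A) = X$, and $X$ is a retract of $\ft^r$, hence projective. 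The natural candidate for $B$ is built from the negated transpose $A^{\sharp}$ of $A$ used with the min-plus (dual) matrix product, so that $x \mapsto A\,(A^{\sharp}x)$ is the order-theoretic projection of $\ft^r$ onto $X$; the required identity $ABA = A$ must be extracted from the hypothesis that $X$ is of pure dimension $r$, which is precisely what rules out the ``bend'' phenomena that obstruct regularity and that occur exactly for polytopes whose pure dimension is smaller than their generator dimension.

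The main obstacle, in both directions, is the interface between pure-dimensionality and the algebra: in the forward direction, that column spaces of idempotent tropical matrices have pure dimension equal to the rank; in the converse, that a full-dimensional minimal square representation is von Neumann regular. Both reduce to controlling the polyhedral cell decomposition of a tropical polytope near its non-generic points, and I expect this geometric analysis, rather than the soft module-theoretic reformulation of the first paragraph or the rank bookkeeping for idempotents, to be where the real work lies.
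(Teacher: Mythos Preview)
Your overall architecture matches the paper's: reduce projectivity to being (isomorphic to) the column space of an idempotent, show that for idempotents the generator and dual dimensions coincide, and then handle the pure-dimension statement in each direction separately after embedding into $\ft^k$ where $k$ is the common value. The paper does exactly this, via Theorems~\ref{thm_projchar}, \ref{thm_equalrank}, \ref{thm_dualrankembedding}, \ref{thm_projimppure} and \ref{thm_pureimpproj}. One small technical correction: $\ft^m$ is not the free $\ft$-module of rank $m$ (it does not contain the standard basis vectors), so your map $\pi_A$ should have domain $F_m = \trop^m \setminus \{(-\infty,\dots,-\infty)\}$; the paper handles this carefully in Proposition~\ref{prop_freemod} and Theorem~\ref{thm_projchar}.

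Where your proposal is genuinely incomplete is precisely where you yourself flag it. In the forward direction, the sentence ``idempotency forces the image of this local map to be a classical affine piece of $\mathrm{col}(E)$ of dimension equal to the rank $m$'' is not an argument: piecewise-linearity plus $E^2=E$ does not by itself prevent the image from having cells of lower dimension, and indeed that is the whole content of what must be proved. The paper's route is entirely different and combinatorial: it uses the Develin--Sturmfels \emph{type} decomposition. For an idempotent $E$ with extremal columns indexed by $J$ and normalised so that $v_{jj}=0$, one shows (Lemma~\ref{lemma_everygenerator}) that every $x \in C_\ft(E)$ has $j \in \type(x)_j$ for all $j \in J$, and then (Lemma~\ref{lemma_middlepoint}) that one can perturb $x$ within its face to a point whose type consists of singletons. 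By Lemma~\ref{lemma_graphdimension} this face has dimension $|J| = r$, so every point lies in an $r$-dimensional face.

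For the converse, your candidate $B = A^\sharp$ is natural, but you have not indicated how pure dimension $r$ yields $AA^\sharp A = A$, and I do not see a short way to do so. The paper again proceeds combinatorially and does not use residuation at all: it first shows (Lemma~\ref{lemma_uniqueface}) that a polytope in $\ft^n$ with at most $n$ generators has at most \emph{one} $n$-dimensional face, so under the pure-dimension hypothesis all the extremal generators $u_1,\dots,u_n$ lie in a common face $X_S$ with $S$ a tuple of singletons. After reordering and rescaling so that $S_k=\{k\}$ and $u_{kk}=0$, the matrix $E$ with columns $u_i$ is then shown directly to satisfy $E^2=E$: the inequality $(E^2)_{ij} \ge E_{ij}$ comes from the zero diagonal, and $(E^2)_{ij} \le E_{ij}$ comes from $k \in \type(u_j)_k$ for each $k$. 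This explicit construction of the idempotent is the missing idea in your converse direction.
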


Theorem~\ref{thm_geometric} will be proved at the end of Section~\ref{sec_geometry} below.
Recall that a square matrix $A$ is called \textit{von Neumann regular}
if there is a matrix $B$ such that $ABA = A$. In ring theory there is a well-established
three-way correspondence between von Neumann regularity, idempotency and projectivity:
for a finitely generated module, being projective is equivalent to being isomorphic to the
row space of an idempotent matrix, which in turn is equivalent to being isomorphic to the row space of
a von Neumann regular matrix. The corresponding result in the case of the tropical semiring with zero can be found in the work of Gaubert; indeed one can obtain an equivalent statement by combining Theorem 100 and Theorem 104 of \cite{Gaubert98}, the latter being based on joint work of Cohen, Gaubert and Quadrat \cite[Theorem 15]{Cohen97b} (which applies to a more general class of semifields). The same correspondence even applies when working over $\ft$, as a consequence of which Theorem~\ref{thm_geometric} immediately yields a new geometric characterisation of von Neumann regularity:


\begin{corollary}\label{cor_regular}
A square matrix over $\ft$ is von Neumann regular if and only if its row
space and column space have the same pure dimension equal to their
generator dimension.
\end{corollary}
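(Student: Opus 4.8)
The plan is to deduce Corollary~\ref{cor_regular} directly from Theorem~\ref{thm_geometric} and Proposition~\ref{prop_regchar}, together with the column/row duality recorded in the introduction (and established in Section~\ref{sec_dual}): if a convex set is the column space of a matrix then its dual dimension equals the generator dimension of the row space, and symmetrically the dual dimension of the row space equals the generator dimension of the column space. Throughout, let $A$ be a square matrix over $\ft$, let $C$ be its column space and $R$ its row space; both are tropical polytopes in the relevant $\ft^n$.

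First I would assemble the dictionary. By Proposition~\ref{prop_regchar}, $A$ is von Neumann regular if and only if $C$ is projective, equivalently if and only if $R$ is projective. By Theorem~\ref{thm_geometric}, $C$ is projective if and only if its pure dimension, its generator dimension and its dual dimension all coincide; and by the duality statement the dual dimension of $C$ is the generator dimension of $R$. Hence $C$ is projective if and only if the pure dimension of $C$, the generator dimension of $C$ and the generator dimension of $R$ are all equal. Interchanging the roles of $C$ and $R$, the polytope $R$ is projective if and only if the pure dimension of $R$, the generator dimension of $R$ and the generator dimension of $C$ are all equal.

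Then, for the forward implication, I would argue: if $A$ is von Neumann regular, then $C$ and $R$ are both projective, so both of the equivalences above yield equalities. The first says that the pure dimension of $C$, the generator dimension of $C$ and the generator dimension of $R$ all agree; the second says that the pure dimension of $R$, the generator dimension of $R$ and the generator dimension of $C$ all agree. Chaining these, the pure dimensions of $C$ and $R$ coincide and this common value equals the (now common) generator dimension of $C$ and of $R$ — which is exactly the statement of the corollary. For the converse, suppose $R$ and $C$ have the same pure dimension $k$, equal also to their generator dimension. Then the dual dimension of $C$, being the generator dimension of $R$, is likewise $k$, so the pure, generator and dual dimensions of $C$ all equal $k$; Theorem~\ref{thm_geometric} then gives that $C$ is projective, and Proposition~\ref{prop_regchar} gives that $A$ is von Neumann regular.

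Since the substantive work is entirely contained in Theorem~\ref{thm_geometric} and Proposition~\ref{prop_regchar}, I do not expect any real obstacle here. The only point requiring care is to invoke the column/row duality in \emph{both} directions, so that the two instances of Theorem~\ref{thm_geometric} — one applied to $C$, one to $R$ — genuinely combine into the single symmetric statement of the corollary, rather than leaving two separate chains of equalities with no common term forcing the pure and generator dimensions of $C$ and $R$ to coincide.
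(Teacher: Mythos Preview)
Your proposal is correct and takes essentially the same approach as the paper: the corollary is presented there as an immediate consequence of Theorem~\ref{thm_geometric} together with Proposition~\ref{prop_regchar}, with the row/column duality of Proposition~\ref{prop_matrixdualrank} (dual dimension of the column space equals generator dimension of the row space, and vice versa) supplying the link between the two spaces. You have simply made explicit the short chain of equalities that the paper leaves to the reader.
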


Recall that a matrix $A$ is called \textit{idempotent} if $A^2 = A$.
Idempotent tropical matrices are of particular significance for metric
geometry, because of a natural relationship between the tropical
idempotency condition on a matrix and the triangle inequality for an
associated distance function (see for example \cite{Develin04} for more
details). A matrix is von Neumann regular if and only if it shares its column space (or
equivalently, its row space) with an idempotent matrix (see, for example, \cite[Theorem 104]{Gaubert98}), so Corollary~\ref{cor_regular} also exactly characterises the row and
column spaces of idempotent matrices. In fact, many of our results below
are proved by working with idempotents, and we believe the technical
understanding of tropical idempotency developed may prove to be of
independent interest.


Numerous definitions of \textit{rank} have been introduced and
studied for tropical matrices, mostly corresponding to different notions
of dimension of the row or column space.
For example, the \textit{tropical rank} of a matrix is the tropical
dimension of its row space, which by \cite[Theorem~23]{Develin04} for example, coincides
with that of its column space (variations of this result can also be found in \cite{Izhakian09} and \cite{Akian12}). The \textit{row rank} or \textit{row
generator rank} is the generator dimension of its row space, which we
shall see below (Section~\ref{sec_dual}) coincides with the dual dimension
of its column space. The \textit{column rank} or \textit{column generator
rank} is defined dually.
Other important notions of rank include
\textit{factor rank} (also known as \textit{Barvinok rank} or \textit{Schein rank}), \textit{Gondran-Minoux row rank}, \textit{Gondran-Minoux column rank},
\textit{determinantal rank} and
\textit{Kapranov rank}; since these do not play a key role in the present
paper we refer the interested reader to \cite{Akian06,Akian09,Develin05} for definitions.
Of these different ranks, none are ever lower than the tropical rank, and
none are ever higher than the greater of row rank and column rank; the
non-obvious parts of this claim are given in \cite[Remark~7.8 and Theorem~8.4]{Akian09}
and \cite[Theorem~1.4]{Develin05}. Thus, as a corollary we obtain a proof of the following
result (which was mentioned without proof in \cite[Fact 4, Section 35.7]{Akian06}):

\begin{corollary}\label{cor_equalrank}
Let $M$ be a square von Neumann regular matrix (for example, an idempotent
matrix) over $\ft$. Then the row
generator rank, column generator rank, tropical rank, factor/Barvinok/Schein rank,
Gondran-Minoux row rank, Gondran-Minoux column rank, determinantal rank and Kapranov rank of $M$ are all equal.
\end{corollary}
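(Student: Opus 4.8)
The plan is to deduce this from Corollary~\ref{cor_regular} --- or, equivalently, from Theorem~\ref{thm_geometric} together with Proposition~\ref{prop_regchar} --- and the inequalities between the various ranks recalled above. For an arbitrary square matrix over $\ft$, every one of the ranks in the list lies between the tropical rank and the maximum of the row generator rank and the column generator rank. Consequently, in order to prove that all the listed ranks of a von Neumann regular matrix $M$ agree, it suffices to show that the tropical rank, the row generator rank and the column generator rank of $M$ all coincide; each of the other ranks is then squeezed between equal values.

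To prove this, write $R$ and $C$ for the row space and column space of $M$. Since $M$ is von Neumann regular, Proposition~\ref{prop_regchar} gives that $R$ and $C$ are both projective $\ft$-modules, so Theorem~\ref{thm_geometric} applies to each: $R$ is of pure dimension, with this pure dimension equal to both its generator dimension and its dual dimension, and likewise for $C$. By the discussion in Section~\ref{sec_dual}, the dual dimension of $C$ equals the generator dimension of $R$, which is the row generator rank of $M$, and dually the dual dimension of $R$ equals the generator dimension of $C$, the column generator rank of $M$. Hence the row generator rank of $M$ equals the dual dimension of $C$, which by projectivity of $C$ equals the generator dimension of $C$, that is, the column generator rank of $M$. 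Finally, since $R$ has pure dimension equal to its generator dimension, and a nonempty set of pure dimension $k$ has topological dimension $k$, the affine tropical dimension of $R$ --- which is, by definition, the tropical rank of $M$ --- also equals the generator dimension of $R$. This establishes the required coincidence. (The trivial case of an empty matrix, where every rank is $0$, may simply be excluded.)

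The argument presents no real obstacle beyond the results already established: the substance lies entirely in Theorem~\ref{thm_geometric} and Proposition~\ref{prop_regchar}. What needs care is essentially bookkeeping --- keeping track of the four relevant notions of dimension (generator, dual, pure/topological and affine tropical) as they are transferred between the row and column spaces, together with the elementary observation that a set of pure dimension $k$ has topological dimension $k$ --- and making sure that the imported inequalities between ranks are applied with the same convention (with ``row rank'' and ``column rank'' meaning generator dimensions) adopted throughout this paper.
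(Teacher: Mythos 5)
Your proposal is correct and follows essentially the same route as the paper, which deduces the corollary from Corollary~\ref{cor_regular} together with the stated inequalities sandwiching all the listed ranks between the tropical rank and the maximum of the row and column generator ranks. Your write-up simply makes explicit the bookkeeping (via Proposition~\ref{prop_regchar}, Theorem~\ref{thm_geometric} and Proposition~\ref{prop_matrixdualrank}) that the paper leaves implicit.
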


We also obtain an order-theoretic description of projectivity. For convex
sets whose generator dimension and dual dimension coincide with the dimension
of the ambient space (essentially, a non-singularity condition), this has
a particularly appealing form:
\begin{theorem}\label{thm_order1}
A tropical polytope in $\ft^n$ of generator dimension $n$ and dual dimension
$n$ is a projective $\ft$-module if and only if it is min-plus (as well as
max-plus) convex.
\end{theorem}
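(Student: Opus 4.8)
The plan is to establish, for a polytope $X \subseteq \ft^n$ of generator dimension $n$, the chain of equivalences: $X$ is projective $\iff$ $X$ is the column space of an idempotent $n \times n$ matrix with all diagonal entries $0$ $\iff$ $X = \{x \in \ft^n : x_i - x_j \le c_{ij} \text{ for all } i,j\}$ for some real constants $c_{ij}$ $\iff$ $X$ is min-plus convex. Since tropical polytopes are automatically max-plus convex, the equivalence of the first and last conditions is exactly Theorem~\ref{thm_order1}; the dual-dimension hypothesis is in fact forced here, either by Theorem~\ref{thm_geometric} or by the fact that the argument below produces an $n \times n$ idempotent representative.

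First I would record a lemma: if $E$ is an idempotent $n \times n$ matrix over $\ft$ whose column space has generator dimension $n$, then $E_{ii} = 0$ for all $i$. One inequality is immediate from $E_{ii} = (E^2)_{ii} = \bigoplus_k E_{ik} \otimes E_{ki} \ge E_{ii} \otimes E_{ii}$, whence $E_{ii} \le 0$. For the reverse, write $E^{(j)}$ for the $j$-th column; idempotency gives $E^{(j)} = \bigoplus_k E_{kj} \otimes E^{(k)}$, and if $E_{jj} < 0$ then, all entries being finite, the summand $E_{jj} \otimes E^{(j)}$ is strictly smaller than $E^{(j)}$ in every coordinate, forcing $E^{(j)} = \bigoplus_{k \ne j} E_{kj} \otimes E^{(k)}$ and so contradicting that the $n$ columns form a minimum-size generating set.

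Next I would dispatch the easy implications. Using the generator-dimension hypothesis, write $X$ as the column space of an $n \times n$ matrix $A$; projectivity of $X$ makes $A$ von Neumann regular (Proposition~\ref{prop_regchar}), hence $X$ is the column space of some idempotent matrix, necessarily $n \times n$, to which the lemma applies. Conversely, a von Neumann regular matrix (in particular any idempotent $E$, since $E E E = E$) has projective column space by the same proposition. For an idempotent $E$ with zero diagonal, its column space equals its fixed-point set $\{x : Ex = x\}$, and since $(Ex)_i \ge E_{ii} \otimes x_i = x_i$ always, one has $Ex = x$ precisely when $E_{ik} + x_k \le x_i$ for all $i,k$; that is, the column space is $\{x \in \ft^n : x_i - x_j \le -E_{ji} \text{ for all } i,j\}$, a ``difference-bound'' set. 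Finally, any difference-bound set is closed under componentwise maximum and minimum --- for the minimum, if $z = x \wedge y$ with $z_i = x_i \le y_i$ then $z_i - z_k = \max(x_i - x_k, \, x_i - y_k) \le c_{ik}$, since $x_i - x_k \le c_{ik}$ and $x_i - y_k \le y_i - y_k \le c_{ik}$ --- so it is in particular min-plus convex.

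The substantive step is to show that a min-plus convex polytope $X$ is projective. I would set $c_{ij} = \sup\{x_i - x_j : x \in X\}$; as $X$ is finitely generated its projectivisation is compact and $x \mapsto x_i - x_j$ is continuous and scaling-invariant, so each $c_{ij}$ is a finite real attained on $X$, with $c_{ii} = 0$. Put $Y = \{x : x_i - x_j \le c_{ij} \text{ for all } i,j\} \supseteq X$; the claim is $Y = X$. Given $y \in Y$, for each ordered pair $(i,k)$ choose $x^{ik} \in X$ attaining $x^{ik}_i - x^{ik}_k = c_{ik}$ and rescale so that its $k$-th coordinate is $y_k$; then its $i$-th coordinate becomes $y_k + c_{ik} \ge y_i$. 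For fixed $k$, set $w^{k} = \bigoplus_i x^{ik}$: this lies in $X$ by max-plus convexity, has $k$-th coordinate $y_k$, and is $\ge y$ coordinatewise; then $w = \bigwedge_k w^{k}$ lies in $X$ by min-plus convexity and equals $y$, so $Y = X$. Replacing the $c_{ij}$ by their ``shortest-path'' closure $\tilde c_{ij}$ (which does not change $Y$, and satisfies $\tilde c_{ii} = 0$ and $\tilde c_{ik} \le \tilde c_{ij} + \tilde c_{jk}$, there being no negative cycles since every cycle sum dominates $\sum (x_{i_l} - x_{i_{l+1}}) = 0$), the matrix $E$ with $E_{ij} = -\tilde c_{ji}$ is idempotent, lies in $\ftn$, and has column space exactly $X$; hence $X$ is projective. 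The main obstacle, as I see it, is precisely this last step: seeing that min-plus convexity, combined with the boundedness supplied by finite generation, is exactly strong enough to cut $X$ out by difference inequalities, the crux being the double-closure trick $y = \bigwedge_k \bigoplus_i x^{ik}$, with some care needed that the relevant suprema are attained (via compactness of the projectivised polytope) so that the $-\infty$-free setting causes no difficulty.
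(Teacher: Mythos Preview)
Your argument is correct, and the forward direction (projective $\Rightarrow$ min-plus convex) is essentially the paper's: both pass through an $n\times n$ idempotent with zero diagonal (your lemma is the paper's Corollary to Lemma~\ref{lemma_plentyzeros}) and then observe that its column space is closed under coordinatewise minimum.

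The backward direction, however, is genuinely different. The paper defines the $i$th column of its candidate idempotent $M$ as the infimum in $X$ of $\{y\in X: y_i\ge 0\}$, then spends most of the work showing that these $n$ columns are pairwise non-proportional extremal points of $X$---this is where the dual-dimension-$n$ hypothesis is actually used---so that $C_\ft(M)=X$ because $X$ has exactly $n$ extremals up to scaling. You instead realise $X$ directly as the difference-bound region $\{x: x_i-x_j\le c_{ij}\}$ via the double-closure identity $y=\bigwedge_k\bigoplus_i x^{ik}$, which sidesteps any appeal to extremal points and never invokes the dual-dimension hypothesis; as you note, that hypothesis is then forced a posteriori by Theorem~\ref{thm_equalrank}. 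In fact the two constructions produce the \emph{same} matrix: the paper's $(j,i)$-entry is $\inf\{y_j: y\in X,\ y_i=0\}=-\sup\{y_i-y_j:y\in X\}=-c_{ij}$, which is exactly your $E_{ji}$. What your route buys is a cleaner argument that also proves the slightly stronger statement that \emph{every} min-plus convex polytope in $\ft^n$ is projective, regardless of its generator or dual dimension; what the paper's route buys is an explicit identification of the idempotent's columns with the extremal points of $X$.

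One small redundancy: your shortest-path closure step is unnecessary. Since $c_{ij}=\sup_{x\in X}(x_i-x_j)$ and $(x_i-x_k)=(x_i-x_j)+(x_j-x_k)$, the constants $c_{ij}$ already satisfy $c_{ik}\le c_{ij}+c_{jk}$ and $c_{ii}=0$, so $\tilde c=c$ and the matrix $E_{ij}=-c_{ji}$ is idempotent as it stands.
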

Theorem~\ref{thm_order1} can also be deduced from results mentioned in the abstract of
the talk \cite{CohenTalk07} but for which a proof has not yet been published.
In greater
generality the formulation is slightly more technical, but still quite straightforward:

\begin{theorem}\label{thm_order2}
A tropical polytope is projective if and only if it has
generator dimension equal to its dual dimension (equal to $k$, say), and is
linearly isomorphic to a submodule of $\ft^k$ that is min-plus convex (as
well as max-plus convex).
\end{theorem}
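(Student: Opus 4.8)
The plan is to derive Theorem~\ref{thm_order2} from Theorem~\ref{thm_geometric}, Theorem~\ref{thm_order1} and the description of dual dimension recorded in Section~\ref{sec_dual}, namely that the dual dimension of a tropical polytope $X$ is the least $k$ for which $X$ admits a linear embedding into $\ft^k$. First I would note that each of the three relevant notions is a linear isomorphism invariant of polytopes. Projectivity is a purely module-theoretic property, so is clearly invariant. Generator dimension is invariant because a linear isomorphism carries generating sets bijectively to generating sets. Dual dimension is invariant because a linear isomorphism between submodules of tropical space is monotone with monotone (because linear) inverse, hence an order isomorphism, and so respects the induced greatest-lower-bound operation; thus it carries a generating set under scaling and induced meet to another such set.

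For the ``only if'' direction, suppose $X$ is a projective polytope. By Theorem~\ref{thm_geometric} its generator dimension equals its dual dimension; call this common value $k$. Since $X$ has dual dimension $k$, Section~\ref{sec_dual} provides a linear embedding $\phi \colon X \to \ft^k$; put $Y = \phi(X)$. By the invariance remarks, $Y$ is a projective polytope in $\ft^k$ of generator dimension $k$ and dual dimension $k$, and it is automatically max-plus convex as a submodule of $\ft^k$. Theorem~\ref{thm_order1} therefore applies to $Y$ and shows it is min-plus convex. Hence $X$ is linearly isomorphic to the min-plus (and max-plus) convex submodule $Y$ of $\ft^k$, with generator dimension and dual dimension both equal to $k$, as required.

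For the ``if'' direction, suppose $X$ has generator dimension and dual dimension both equal to $k$, and is linearly isomorphic, via some $\psi$, to a submodule $Y \subseteq \ft^k$ that is min-plus (as well as max-plus) convex. By invariance under $\psi$, $Y$ has generator dimension $k$ and dual dimension $k$, so Theorem~\ref{thm_order1} shows $Y$ is projective; since $X \cong Y$ as $\ft$-modules, $X$ is projective too.

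There is no real obstacle here beyond assembling the pieces in the right order; the one point to keep in mind is that min-plus convexity is a feature of a chosen realisation of $X$ inside $\ft^k$ rather than something intrinsic to the abstract module $X$, which is exactly why the statement speaks of the existence of a min-plus convex realisation. In the ``only if'' direction this causes no trouble because Theorem~\ref{thm_order1} is a biconditional valid for \emph{every} polytope in $\ft^k$ of full generator and dual dimension, so it applies to whichever image $\phi(X)$ happens to arise from the embedding furnished by the dual dimension.
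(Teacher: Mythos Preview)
Your argument is correct and follows essentially the same route as the paper: reduce both directions to Theorem~\ref{thm_order1} by passing to a linearly isomorphic realisation in $\ft^k$. The only cosmetic differences are that the paper invokes Theorem~\ref{thm_equalrank} directly (rather than the stronger Theorem~\ref{thm_geometric}, which in the paper's ordering is proved \emph{after} Theorem~\ref{thm_order2}, though without circularity) and obtains the embedding into $\ft^k$ via Theorem~\ref{thm_projchar} (as the column space of a $k\times k$ idempotent) rather than via Theorem~\ref{thm_dualrankembedding}; your invariance remarks make either choice work.
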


Theorems~\ref{thm_order1} and \ref{thm_order2} are established in
Section~\ref{sec_order} below. Polytopes that are min-plus (as well
as max-plus) convex have been studied in detail by Joswig and Kulas \cite{Joswig10},
who term them \textit{polytropes}. In the terminology of \cite{Joswig10},
a consequence of Theorem~\ref{thm_order1} is that a tropical $n$-polytope
in $\ft^n$ is a polytrope if and only if it is a projective $\ft$-module.
Theorem~\ref{thm_order2} says that a general tropical polytope is a
projective $\ft$-module if and only if it is linearly isomorphic to a
polytrope in some dimension.

As well as connecting algebraic and geometric aspects of polytopes, our
approach also yields further insight into the abstract algebraic structure
of the semigroup of all $n \times n$ tropical matrices, and in particular
the idempotent elements. For example, we
prove that any matrix of full column rank or row rank is $\GreenR$-related
(or $\GreenL$-related) to at most one idempotent (see Section~\ref{sec_prelim}
below for definitions and Theorem~\ref{thm_uniqueidpts} for the formal
statement and proof).

There are a number of different variants on the tropical semiring, which
arise naturally in different areas (for example algebraic geometry, traditional max-plus
algebra, and idempotent analysis). As well as the (theoretically trivial, but
potentially confusing) issue of whether to use maximum or minimum, one may choose
to augment $\ft$ with an additive zero element (see Section~\ref{sec_prelim}
below) or a ``top'' element (see for example \cite{Cohen04}).
In bridging different areas, and drawing on ideas and results from
all of them, we face the question of exactly which semiring to work in.
For simplicity, in this paper we have chosen to
establish most of our main results
only for $\ft$ as defined above; this choice seems to makes the ideas behind
the proofs clearest and also minimises the extent to which we must modify
and reprove existing geometric results to make them suitable for our needs. In places we are nevertheless forced to reprove some foundational results which are known for $\trop$-modules in the setting of $\ft$-modules; in other places it is more convenient to use known results over $\trop$ directly. We have tried to give detailed references for any results which are known in similar settings, but have included the proofs for the reader less familiar with the semiring literature. It is likely that our main geometric results can be extended to the augmented semirings themselves; the main modifications required would be the replacement of subtraction in the proofs by an appropriate notion of \textit{residuation} (see \cite{Blyth72}) and the extension of certain existing results which we rely upon (for example, those of \cite{Develin04}) to the new setting.

In addition to this introduction, this article comprises six sections.
Section~\ref{sec_prelim} briefly revises some necessary definitions,
while Section~\ref{sec_dual} introduces the key concept of the
\textit{dual dimension} of a polytope, and proves several equivalent
formulations. Section~\ref{sec_proj} proves some foundational results
connecting
projective modules, free modules and idempotent matrices over $\ft$ (some of which are already known in the case of modules over a semiring with 0 element, or specifically over $\trop$).
 Section~\ref{sec_order} and Section~\ref{sec_geometry} prove
our main results, giving order-theoretic and geometric characterisations
respectively of projective polytopes. Finally, Section~\ref{sec_examples}
presents some examples of how our concepts and results apply to tropical
polytopes in low dimension; while these are collected in one place for ease
of discussion, the reader may wish to refer to them at various times
throughout the paper.

\section{Preliminaries}\label{sec_prelim}

Recall that we denote by $\ft$ the set $\mathbb{R}$ equipped with the
operations of maximum (denoted by $\oplus$) and addition (denoted by
$\otimes$, or where more convenient by $+$ or simply by juxtaposition).
Thus, we write $a \oplus b = \max(a,b)$ and $a \otimes b = ab = a + b$.
Note that $0$ acts as a multiplicative identity.

We denote by $\trop$ the set $\ft \cup \lbrace -\infty \rbrace$ with
the operations $\oplus$ and $\otimes$ extended from the above so as
to make $-\infty$ an additive identity and a multiplicative zero, that
is
$$(-\infty) \oplus x = x \oplus (-\infty) = x \text{ and } (-\infty) x = x (-\infty) = -\infty$$
for all $x \in \trop$.
We also extend the usual order on $\mathbb{R}$ to $\trop$ in the obvious
way, namely by $-\infty \leq x$ for all $x \in \trop$.

By an \textit{$\ft$-module}\footnote{Some authors prefer the term
\textit{semimodule}, to emphasise the non-invertibility of addition, but
since no other kind of module is possible over $\ft$ we have preferred
the more concise term.}
 we mean a commutative semigroup $M$ (with operation $\oplus$)
equipped with a left action of $\ft$ such that $\lambda (\mu m) = (\lambda \mu) m$,
$(\lambda \oplus \mu) m = \lambda m \oplus \mu m$, $\lambda (m \oplus n) = \lambda m \oplus \lambda n$
and $0 m = m$
for all $\lambda, \mu \in \ft$ and $m, n \in M$.
A \textit{$\trop$-module} is a commutative monoid $M$ (with operation $\oplus$
and neutral element $0_M$) satisfying the above conditions with the additional
requirement that
$\lambda 0_M = 0_M = (-\infty) m$ for all $\lambda \in \trop$ and $m \in M$.
Note that the idempotency of addition in $\ft$ and $\trop$ forces the addition
in any $\ft$-module or $\trop$-module also to be idempotent.
There is an obvious notion of isomorphism between modules; we write $X \cong Y$
to indicate that two modules are isomorphic.

Let $R \in \lbrace \ft, \trop \rbrace$. We consider the space $R^n$ of $n$-tuples of $R$; if $x \in R^n$ then
we write $x_i$ for the $i$th component of $x$. Then $R^n$ admits an
addition and a scaling action of $R$ defined respectively by
$(x \oplus y)_i = x_i \oplus y_i$ and $(\lambda x)_i = \lambda (x_i) = \lambda + x_i$.
It is readily verified that these operations make $R^n$ into an $R$-module.
$R^n$ also admits a
partial order, given by $x \leq y$ if $x_i \leq y_i$ for all $i$, and a
corresponding componentwise \textit{minimum} operation, the minimum of two elements being
greatest lower bound with respect to the partial order.

In the case $R = \trop$
the vector $(-\infty, \dots, -\infty) \in \trop^n$ is an additive neutral
element
for $\trop^n$. The vector $(-\infty, \dots, -\infty, 0, -\infty, \dots, -\infty)$
with the $0$ in component $i$ is called the \textit{$i$th standard basis
vector} for $\trop^n$, and denoted $e_i$.

We write $M_n(R)$ for the set of all $n \times n$ square matrices over
$R$. Since $\oplus$ distributes over $\otimes$, these operations induce an
associative multiplication for matrices in the usual way, namely:
$$(AB)_{ij} = \bigoplus_{k = 1}^n A_{ik} \otimes B_{kj}$$
giving $M_n(R)$ the structure of a semigroup. (Of course one may equip it
with entrywise addition to form a (non-commutative) \textit{semiring},
but we shall not be concerned with this extra structure here.)
The semigroup $M_n(R)$ acts on the left and right of the space $R^n$ in
the obvious way, by viewing vectors as $n \times 1$ or $1 \times n$
matrices respectively.

A subset $X \subseteq R^n$
is called \textit{(max-plus) convex} if it is closed under $\oplus$ and the
action of $R$, that is, if it is an $R$-submodule of $R^n$. It is called
\textit{min-plus convex} if it is closed under
componentwise minimum and the action of $R$. A non-empty and
finitely generated
(under the linear operations of scaling and $\oplus$) submodule of $\ft^n$
is called a
\textit{(tropical) polytope}. Tropical polytopes in $\ft^n$ projectivize to compact
subsets of $\mathbb{R}^{n-1}$ with the usual topology \cite[Proposition~2.6]{Joswig05}.
Some examples of tropical polytopes are collected in
Section~\ref{sec_examples} at the end of this paper.

If $M$ is a matrix
over $R$ then its \textit{column space} $C_R(M)$ and \textit{row space}
$R_R(M)$ are the polytopes generated by its columns and its rows
respectively.

A non-zero element $x$ in a convex set $X \subseteq \trop^n$ or $X \subseteq \ft^n$ is called \textit{extremal} if for every
expression
$$x = \bigoplus_{i=1}^k y_i$$
with each $y_i \in X$ we have that $y_i = x$ for some $i$. Note that if
$x$ is extremal then $\lambda x$ is also extremal for all $\lambda \in \ft$.
It is immediate from the definition that if $X \subseteq \ft^n$ then its
extremal points do not depend upon whether it is considered as a subset of
$\ft^n$ or $\trop^n$.
It is known (see for example \cite{Butkovic07,Wagneur91}) that if $X$ is
finitely generated then it is
generated by its extremal points, and every generating set for $X$ contains
a scaling of every extremal point.

We shall also make use of some of \textit{Green's relations}, which are
a tool used in semigroup theory to describe the principal ideal structure
of a semigroup or monoid.
Let $S$ be any semigroup. If $S$ is a monoid,
we set $S^1 = S$, and otherwise
we denote by $S^1$ the monoid obtained by adjoining a new identity element
$1$ to $S$. We define binary relations $\GreenL$ and $\GreenR$ on $S$ by
$a \GreenL b$ if and only if $S^1 a = S^1 b$, and $a \GreenR b$ if and only if
$a S^1 = b S^1$. We define the binary relation $\GreenH$ on $S$ by
$a \GreenH b$ if and only if $a \GreenL b$ and $a \GreenR b$. Finally, the binary
relation $\GreenD$ is defined by $a \GreenD b$ if and only if
there exists an element $c \in S$ such that
$a \GreenR c$ and $c \GreenL a$. Each of $\GreenL$, $\GreenR$, $\GreenH$
and $\GreenD$ is an equivalence relation on $S$ \cite{Howie95}.

The following theorem, which first appeared in \cite{Gaubert98} over $\trop$ (see also \cite{K_tropd,K_tropicalgreen} for proofs over both $\trop$ and $\ft$) summarises some results
characterising Green's relations in the semigroups $M_n(\ft)$ and $M_n(\trop)$.
\begin{theorem}
\label{thm_green}
Let $A, B \in M_n(R)$ for $R \in \{\mathbb{FT},\mathbb{T} \}$.
\begin{itemize}
\item[(i)] $A \mathcal{L} B$ if and only if $R_R(A) = R_R(B)$;
\item[(ii)] $A \mathcal{R} B$ if and only if $C_R(A) = C_R(B)$;
\item[(iii)] $A \mathcal{D} B$ if and only if  $C_R(A)$ and $C_R(B)$ are linearly isomorphic;
\item[(iv)] $A \mathcal{D} B$ if and only if  $R_R(A)$ and $R_R(B)$ are linearly isomorphic.
\end{itemize}
\end{theorem}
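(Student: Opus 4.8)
The plan is to reduce everything to the dictionary between matrix products and linear combinations of rows and columns. Since $M_n(R)$ is a monoid we may take $S^1 = S = M_n(R)$, so $A \GreenL B$ means $A = UB$ and $B = VA$ for some $U, V \in M_n(R)$, and dually $A \GreenR B$ means $A = BV'$ and $B = AU'$. The basic observation is that $A = UB$ for some $U$ if and only if every row of $A$ lies in $R_R(B)$ --- the $i$th row of $UB$ is the combination $\bigoplus_k U_{ik}\,(\text{$k$th row of } B)$, and conversely any way of writing the rows of $A$ as combinations of those of $B$ assembles into a matrix $U$ --- and dually $A = BV'$ for some $V'$ if and only if $C_R(A) \subseteq C_R(B)$. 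Substituting these into the definitions of $\GreenL$ and $\GreenR$ yields (i) and (ii) at once.

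For the forward implications in (iii) and (iv) I would first isolate a ``change of generating set'' observation: if $A = UB$ and $B = VA$ then $C_R(A) = U(C_R(B))$ and $C_R(B) = V(C_R(A))$, while $VUB = VA = B$ forces $VU$ to fix a generating set of $C_R(B)$, hence to restrict to the identity on all of $C_R(B)$ (and similarly $UV$ on $C_R(A)$); so $U$ and $V$ restrict to mutually inverse linear isomorphisms between $C_R(B)$ and $C_R(A)$. Thus $A \GreenL B$ implies $C_R(A) \cong C_R(B)$, and dually $A \GreenR B$ implies $R_R(A) \cong R_R(B)$. If now $A \GreenD B$, choose $C$ with $A \GreenR C$ and $C \GreenL B$; then $C_R(A) = C_R(C) \cong C_R(B)$, giving the forward direction of (iii), and $R_R(A) \cong R_R(C) = R_R(B)$, giving that of (iv). (Alternatively (iv) follows from (iii) applied to $A^{T}$ and $B^{T}$, since $M \mapsto M^{T}$ is an anti-automorphism of $M_n(R)$ interchanging row and column spaces and preserving $\GreenD$.)

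The substance is the converse of (iii): from a linear isomorphism $\theta \colon C_R(A) \to C_R(B)$ I must produce a witness for $A \GreenD B$. By (i) and (ii) it suffices to build a single $n \times n$ matrix $C$ whose columns generate $C_R(A)$ and whose rows generate $R_R(B)$, for then $C \GreenR A$ and $C \GreenL B$, whence $A \GreenD B$. One is free to take for the columns of $C$ any generating multiset of $C_R(A)$; the natural attempt is to transport a generating set of $C_R(B)$ across $\theta^{-1}$, which makes the column condition automatic and reduces the problem to arranging that the resulting rows still generate $R_R(B)$ --- equivalently, that the transition matrices realising $C_R(A) \cong C_R(B)$ can be chosen so as to also exhibit the $\GreenL$-relation between $C$ and $B$. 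Writing candidate transition matrices down explicitly by residuation (the candidate for a matrix $U$ with $U b_i = \theta^{-1}(b_i)$ for every column $b_i$ of $B$ has $l$th column $\bigwedge_{j}\bigl(-(b_j)_l + \theta^{-1}(b_j)\bigr)$, and symmetrically), and then verifying that these candidates are \emph{exact} --- which calls for a careful choice of generating sets and a case analysis of which source coordinate ``witnesses'' each target entry --- is the technical heart of the theorem and the step I expect to be the main obstacle; the combinatorics involved is close to that of the type decomposition of tropical polytopes.

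A final point requiring care is the passage between $R = \trop$ and $R = \ft$. Over $\trop$ the argument is cleanest, since every linear endomorphism of $\trop^{n}$ is literally a matrix --- its columns being the images of the standard basis vectors --- so ``extending $\theta$'' is unambiguous and the residuation formulas make sense verbatim. Over $\ft$ there are no standard basis vectors and no $-\infty$, so these formulas must be reinterpreted and extendability re-established directly; I would do this by working in $\trop$ wherever possible and checking that the polytopes and isomorphisms in play do not genuinely require the adjoined zero.
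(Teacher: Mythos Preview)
The paper does not prove this theorem at all: it is quoted from \cite{K_tropd,K_tropicalgreen} and used as a black box, so there is no ``paper's own proof'' to compare against. Your treatment of (i) and (ii), and of the forward implications in (iii) and (iv), is correct and is exactly the standard argument; the only small point worth recording is why, over $\ft$, one may always assemble the coefficients expressing a row of $A$ in terms of the rows of $B$ into an honest matrix $U \in M_n(\ft)$ --- namely, because all entries are finite one can pad with sufficiently negative real scalars in place of $-\infty$.

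The substantive issue is the converse of (iii), which you yourself flag as incomplete. Your strategy of constructing a single matrix $C$ with $C_R(C) = C_R(A)$ and $R_R(C) = R_R(B)$ is a perfectly legitimate route, but the residuation formula you write down does not obviously do the job, and the ``case analysis of which source coordinate witnesses each target entry'' is precisely the content of the theorem rather than a routine verification. The argument actually used in \cite{K_tropd} is organised differently and avoids this combinatorics: one shows that for any matrix $M$ there is a canonical order-reversing, scaling-inverting bijection (an \emph{anti-isomorphism}) between $R_\ft(M)$ and $C_\ft(M)$, and moreover that any two anti-isomorphic polytopes are in fact isomorphic (this is the content behind Theorem~\ref{thm_rowcolumn} in the present paper). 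Given an isomorphism $C_\ft(A) \cong C_\ft(B)$, compose with these anti-isomorphisms to obtain $R_\ft(A) \cong R_\ft(B)$; one then shows directly that an isomorphism of column spaces can be realised by right multiplication by a matrix, which produces the required $\GreenR$-witness, and dually for $\GreenL$. This duality approach is cleaner than attempting to build $C$ by hand, and it is what makes the $\ft$ versus $\trop$ distinction tractable without ad hoc patching.
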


We also need the following version of \cite[Theorem 104, part 3]{Gaubert98} over $\ft$, which can be seen as a consequence of \emph{tropical duality} (see for example \cite{Cohen04,Develin04,K_tropd}). This result follows immediately from Theorem~\ref{thm_green}(iii) and (iv) in the case of square matrices of the same size, but we shall also make use of it in the non-square, non-uniform case. We give a brief proof for expository purposes.

\begin{theorem}\label{thm_rowcolumn}
Let $M$ and $N$ be matrices over $\ft$ (not necessarily square or of the
same size). Then $C_\ft(M) \cong C_\ft(N)$ if and only if
$R_\ft(M) \cong R_\ft(N)$.
\end{theorem}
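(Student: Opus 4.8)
The plan is to reduce the statement to the case of two square matrices of the \emph{same} size, where it is precisely the combination of parts (iii) and (iv) of Theorem~\ref{thm_green}: for $A, B \in M_n(\ft)$ one has $C_\ft(A) \cong C_\ft(B)$ iff $A \GreenD B$ iff $R_\ft(A) \cong R_\ft(B)$. (We may assume throughout that all matrices are nonempty, the degenerate cases being vacuous.)

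The technical device making this reduction possible is a simple padding observation, which I would isolate as a lemma. Suppose $M$ is $p \times q$ and $M'$ is obtained from $M$ by appending one new row equal to the first row of $M$. Then $R_\ft(M') = R_\ft(M)$ --- literally as subsets of $\ft^q$, since the two matrices have the same set of rows up to repetition --- while the columns of $M'$ are exactly the vectors $(v, v_1)$ as $v$ ranges over the columns of $M$; hence $C_\ft(M')$ is the image of $C_\ft(M)$ under the linear injection $v \mapsto (v, v_1)$, whose linear inverse is the coordinate projection forgetting the last entry, so $C_\ft(M') \cong C_\ft(M)$. Transposing this statement, appending a duplicate of the first column leaves the column space unchanged (as a set) and replaces the row space by an isomorphic copy. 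Iterating, from any $p \times q$ matrix $M$ and any $b \geq \max(p,q)$ one builds a $b \times b$ matrix $\widehat M$ --- first append $b - p$ copies of the first row, then $b - q$ copies of the (new) first column --- with $C_\ft(\widehat M) \cong C_\ft(M)$ and $R_\ft(\widehat M) \cong R_\ft(M)$.

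Granting this, the theorem follows at once. Given $M$ and $N$ of sizes $p \times q$ and $r \times s$, put $b = \max(p, q, r, s)$ and form the $b \times b$ matrices $\widehat M$ and $\widehat N$ as above. Then $C_\ft(M) \cong C_\ft(N)$ iff $C_\ft(\widehat M) \cong C_\ft(\widehat N)$ iff $\widehat M \GreenD \widehat N$ (Theorem~\ref{thm_green}(iii)) iff $R_\ft(\widehat M) \cong R_\ft(\widehat N)$ (Theorem~\ref{thm_green}(iv)) iff $R_\ft(M) \cong R_\ft(N)$, as required. (Alternatively, since transposing a matrix interchanges its row and column spaces, one could prove just one implication and deduce the other; but the chain of equivalences above already gives both.)

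I do not anticipate a real obstacle. The only point needing attention is that the passage to square matrices must respect the isomorphism types of the row space \emph{and} the column space simultaneously; the duplicate-row/duplicate-column trick is arranged precisely so that each elementary step perturbs only one of the two spaces, and then only up to linear isomorphism.
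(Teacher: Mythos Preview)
Your argument is correct, but it takes a genuinely different route from the paper's. The paper proves the implication directly using duality: by \cite[Theorem~2.4]{K_tropd} there are anti-isomorphisms $g : R_\ft(M) \to C_\ft(M)$ and $h : C_\ft(N) \to R_\ft(N)$, and composing an isomorphism $f : C_\ft(M) \to C_\ft(N)$ with these (together with \cite[Lemma~2.3]{K_tropd}, which turns the composite of two anti-isomorphisms into an isomorphism) yields the required isomorphism of row spaces. No padding or reduction to the square case is needed.

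Your approach instead treats the square case of Theorem~\ref{thm_green} as a black box and reduces the general statement to it via the duplicate-row/duplicate-column trick. This is perfectly sound: the map $v \mapsto (v, v_1)$ is indeed an $\ft$-linear isomorphism onto its image, and your care in doing the row and column padding in sequence ensures both spaces are preserved up to isomorphism. The advantage of your method is that it requires no knowledge of the anti-isomorphism machinery beyond what is already packaged in Theorem~\ref{thm_green}; the advantage of the paper's method is that it is shorter, works uniformly for matrices of any shape without an auxiliary construction, and makes explicit the underlying reason the result holds (namely the order-reversing duality between row and column spaces of a single matrix).
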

\begin{proof}
Suppose $f : C_\ft(M) \to C_\ft(N)$ is an isomorphism of $\ft$-modules.
By \cite[Theorem~2.4]{K_tropd} (see also \cite{Cohen04,Develin04}) there are \textit{anti-isomorphisms}
(bijections which invert scaling and reverse the partial order)
$g : R_\ft(M) \to C_\ft(M)$ and $h : C_\ft(N) \to R_\ft(N)$.
Then the composite $f \circ g : R_\ft(M) \to C_\ft(N)$ is clearly also
an anti-isomorphism, so by \cite[Lemma~2.3]{K_tropd}, the map
$h \circ f \circ g : R_\ft(M) \to R_\ft(N)$ is an isomorphism of $\ft$-modules.

The converse is dual.
\end{proof}

\section{Dual Dimension}\label{sec_dual}

Let $X \subseteq \ft^n$ be a convex set. We define the \textit{dual dimension}
of $X$ to be the minimum cardinality of a generating set for $X$ under the
operations of scaling and \textit{greatest lower bound} within $X$.
Beware that the greatest lower bound
operation within $X$ can differ from the componentwise minimum
operation, that is, the greatest lower bound operation in the
ambient space $\ft^n$. Indeed, they will coincide exactly if $X$ is min-plus
as well as max-plus convex.

The concept of dual dimension is in some sense
implicit in the theory of duality for tropical modules (see for example
\cite{Cohen04}), but to the authors' knowledge it was first explicitly
mentioned in \cite{K_tropj}, and has yet to be extensively studied.
Some examples of the dual dimension of polytopes are presented in
Section~\ref{sec_examples} below. We
next prove some alternative characterisations of dual dimension, which
we hope will convince the reader of its significance.

\begin{proposition}\label{prop_matrixdualrank}
Let $M$ be a (not necessarily square) matrix over $\ft$.
Then the dual dimension of $C_\ft(M)$ is
the generator dimension of $R_\ft(M)$ (that is, the row generator rank of the
matrix $M$).
\end{proposition}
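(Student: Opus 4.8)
The plan is to exploit the order-theoretic duality between the row space and the column space of a tropical matrix. By \cite[Theorem~2.4]{K_tropd} --- which is already invoked in the proof of Theorem~\ref{thm_rowcolumn}, and which holds for rectangular matrices --- there is an anti-isomorphism $g : R_\ft(M) \to C_\ft(M)$, that is, a bijection inverting the scaling action and reversing the partial order. The first step is to record what such a map does to lattice operations: an order-reversing bijection carries least upper bounds to greatest lower bounds. Since $R_\ft(M)$ is max-plus convex, any $a,b \in R_\ft(M)$ have a least upper bound within $R_\ft(M)$, namely $a \oplus b$; applying $g$ shows that $g(a)$ and $g(b)$ have a greatest lower bound within $C_\ft(M)$, equal to $g(a \oplus b)$. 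In particular the greatest-lower-bound operation within $C_\ft(M)$ is everywhere defined, and is precisely the image under $g$ of the operation $\oplus$ on $R_\ft(M)$; and since $g$ inverts scaling it sends $\lambda a$ to $(-\lambda) g(a)$, so it respects the two scaling actions up to the harmless reparametrisation $\lambda \mapsto -\lambda$ (harmless because $\ft$ is a group under $\otimes$, so closure under scaling by all $\lambda$ and by all $-\lambda$ are the same condition).

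Next I would transfer generating sets across $g$. By an easy induction on the length of an expression, using the translation of the two operations just described, $g$ maps the closure of a subset $S \subseteq R_\ft(M)$ under $\oplus$ and scaling bijectively onto the closure of $g(S) \subseteq C_\ft(M)$ under greatest lower bound within $C_\ft(M)$ and scaling. Since $g(R_\ft(M)) = C_\ft(M)$, this means that $S$ generates $R_\ft(M)$ as an $\ft$-module precisely when $g(S)$ generates $C_\ft(M)$ in the ``dual'' sense used to define dual dimension.

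Finally, because $g$ is a bijection we have $|S| = |g(S)|$, so taking $S$ to be a minimum-size generating set for $R_\ft(M)$ shows that the dual dimension of $C_\ft(M)$ is at most the generator dimension of $R_\ft(M)$; applying the same reasoning to the anti-isomorphism $g^{-1} : C_\ft(M) \to R_\ft(M)$ --- using that $R_\ft(M)$ is max-plus convex to see that $g^{-1}$ carries the greatest-lower-bound operation within $C_\ft(M)$ back to $\oplus$ on $R_\ft(M)$ --- gives the reverse inequality. Hence the two dimensions coincide, which is the assertion of the proposition.

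I expect the only genuinely non-formal point to be the interchange of suprema and infima under the anti-isomorphism, and in particular the observation that this forces the greatest-lower-bound operation \emph{within} $C_\ft(M)$ to be total (so that ``dual dimension'' is even meaningful for an arbitrary polytope, where this operation need not agree with componentwise minimum). Once that is in place, everything is a bookkeeping translation through $g$, and no squareness hypothesis on $M$ is required.
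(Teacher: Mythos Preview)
Your proposal is correct and follows essentially the same approach as the paper: both arguments invoke the anti-isomorphism $g : R_\ft(M) \to C_\ft(M)$ from \cite[Theorem~2.4]{K_tropd}, observe that it carries $\oplus$ to greatest lower bound within $C_\ft(M)$ and scalings to scalings, and conclude that generating sets correspond bijectively. Your version spells out the lattice-theoretic justification and the two-inequality bookkeeping more carefully than the paper does, but the underlying idea is identical.
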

\begin{proof}
It is known \cite[Theorem~2.4]{K_tropd} (see also \cite{Cohen04,Develin04}) that there is an
\textit{anti-isomorphism} (a bijection which inverts scaling and
reverses the order) from $R_\ft(M)$ to $C_\ft(M)$. This map takes
scalings to scalings, and maps the $\oplus$ operation in $R_\ft(M)$ to
greatest lower bound within $C_\ft(M)$. Thus, the generator dimension of $R_\ft(M)$
(the minimum number of generators for $R_\ft(M)$ under $\oplus$ and scaling)
is equal to the dual dimension of $C_\ft(M)$ (the minimum number of generators for $C_\ft(M)$
under greatest lower bound and scaling).
\end{proof}

\begin{theorem}\label{thm_dualrankembedding}
Let $X \subseteq \ft^n$ be a tropical polytope. Then the dual
dimension of $X$ is the smallest $k$ such that $X$ embeds linearly in
$\ft^k$. In particular, the dual dimension of $X$ cannot exceed $n$.
\end{theorem}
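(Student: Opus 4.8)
The plan is to realise $X$ as a column space and then translate everything into statements about row spaces, using Proposition~\ref{prop_matrixdualrank} and Theorem~\ref{thm_rowcolumn} as the main tools. Since $X$ is a polytope, write $X = C_\ft(M)$ where $M$ is the $n \times m$ matrix whose columns form a finite generating set for $X$. By Proposition~\ref{prop_matrixdualrank}, the dual dimension of $X$ equals the generator dimension of the row space $R_\ft(M)$; call this number $k$. We must show that $k$ is precisely the least $j$ for which $X$ embeds linearly into $\ft^j$.

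For the bound ``$X$ embeds into $\ft^k$'', choose a generating set of size $k$ for $R_\ft(M)$ and let $N$ be the $k \times n$ matrix having these vectors as its rows, so that $R_\ft(N) = R_\ft(M)$. In particular $R_\ft(N) \cong R_\ft(M)$, so Theorem~\ref{thm_rowcolumn} gives $C_\ft(N) \cong C_\ft(M) = X$. But $C_\ft(N)$ is by construction a submodule of $\ft^k$, so $X$ embeds linearly into $\ft^k$. For the reverse inequality, suppose $\phi : X \to \ft^j$ is a linear embedding. Since $X$ is finitely generated, so is its image $Y := \phi(X)$ (the images of generators of $X$ generate $Y$), so $Y$ is a tropical polytope in $\ft^j$, say $Y = C_\ft(P)$ for some $j \times \ell$ matrix $P$. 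Then $C_\ft(M) = X \cong Y = C_\ft(P)$, and Theorem~\ref{thm_rowcolumn} yields $R_\ft(M) \cong R_\ft(P)$. Now $R_\ft(P)$ is generated by the $j$ rows of $P$, hence has generator dimension at most $j$; since generator dimension is preserved by linear isomorphism, $k = $ (generator dimension of $R_\ft(M)$) $\le j$. Combining the two paragraphs, the least such $j$ is exactly $k$. Finally, applying this with $\phi$ the inclusion $X \subseteq \ft^n$ shows $k \le n$.

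The argument rests entirely on the row/column space duality packaged in Theorem~\ref{thm_rowcolumn} (ultimately the anti-isomorphism of \cite{K_tropd}); the only mildly delicate points are checking that a linear embedding into $\ft^j$ has finitely generated image, so that the polytope machinery applies, and observing that generator dimension is an isomorphism invariant. Both are routine, so I do not expect a substantial obstacle here beyond assembling these ingredients correctly.
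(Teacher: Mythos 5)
Your proof is correct and follows essentially the same route as the paper: realise $X$ as a column space $C_\ft(M)$, use Proposition~\ref{prop_matrixdualrank} to identify the dual dimension with the generator dimension of $R_\ft(M)$, and apply Theorem~\ref{thm_rowcolumn} in both directions (the paper merely selects the extremal rows of $M$ itself where you take an arbitrary minimal generating set of the row space, and it invokes Proposition~\ref{prop_matrixdualrank} on the image matrix where you instead note that generator dimension is an isomorphism invariant). One typographical slip: since $M$ is $n \times m$, its rows lie in $\ft^m$, so your matrix $N$ should be $k \times m$ rather than $k \times n$; this does not affect the argument.
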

\begin{proof}
Suppose $X$ has generator dimension $q$ and dual dimension $k$.
Now $X$ is the column space of an $n \times q$ matrix $M$, and it follows
by Proposition~\ref{prop_matrixdualrank} that $R_\ft(M)$ has generator
dimension $k$; in particular, $k$ is finite. Thus, $R_\ft(M)$ has $k$ distinct (up to scaling) extremal
points and these must all occur as rows of $M$. Choose $k$ rows to represent
the extremal points, and discard the others to obtain a $k \times q$ matrix
$N$. Then $R_\ft(N) = R_\ft(M)$, so by Theorem~\ref{thm_rowcolumn},
$X = C_\ft(M)$ is isomorphic to $C_\ft(N) \subseteq \ft^k$.

Now suppose $X$ embeds linearly into $\ft^p$; we need to show that $k \leq p$.
The image of this
embedding is a
convex set of generator dimension $q$ in $\ft^p$, and so can be
expressed as the column space of a $p \times q$ matrix $N$ say. By
Proposition~\ref{prop_matrixdualrank}, the row rank of $N$ is the
dual dimension $k$ of $X$. But the size of $N$ means that this cannot
exceed $p$, so we have $k \leq p$ as required.
\end{proof}

\section{Projectivity, Free Modules, Idempotents and Regularity}\label{sec_proj}

In this section, we briefly discuss some properties of finitely generated
projective $\ft$-modules, and their relationship to idempotency and
von Neumann regularity of matrices. The corresponding relationship over rings
is well known, and was extended to cover distributive idempotent semifields (such as $\trop$) in
\cite{Cohen97b, Cohen06, Gaubert98}. Some of the results given there carry over to the case of
$\ft$-modules by making suitable modifications; where this is the case we give references to the original result.

To begin with we shall need a simple description of
free $\ft$-modules of finite rank. For any positive integer $k$, define
$$F_k = \trop^k \setminus \lbrace (-\infty, \dots, -\infty ) \rbrace.$$
Then $F_k$ is closed under addition and scaling by reals, and hence has the structure
of an $\ft$-module.

\begin{proposition}\label{prop_freemod}
$F_k$ is a free $\ft$-module on the subset $\lbrace e_1, \dots, e_k \rbrace$ of standard
basis vectors.
\end{proposition}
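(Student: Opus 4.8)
The plan is to verify the universal property of free modules directly: I must show that for every $\ft$-module $M$ and every choice of elements $m_1, \dots, m_k \in M$, there is a unique $\ft$-module morphism $\varphi : F_k \to M$ with $\varphi(e_i) = m_i$ for each $i$. First I would establish \emph{uniqueness}. Every element $x = (x_1, \dots, x_k) \in F_k$ has at least one finite coordinate, say those indexed by a non-empty set $S \subseteq \{1, \dots, k\}$; then $x = \bigoplus_{i \in S} x_i e_i$, an expression purely in terms of the $e_i$, the scaling action, and $\oplus$. Hence any morphism $\varphi$ respecting the module operations is forced to satisfy $\varphi(x) = \bigoplus_{i \in S} x_i m_i$, which pins it down completely.

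Next I would define $\varphi$ by exactly that formula and check it is a well-defined $\ft$-module morphism. Well-definedness is immediate since the finite-coordinate set $S$ and the values $x_i$ are uniquely determined by $x$. For the homomorphism property I need $\varphi(x \oplus y) = \varphi(x) \oplus \varphi(y)$ and $\varphi(\lambda x) = \lambda \varphi(x)$ for $\lambda \in \ft$. The scaling identity is routine: scaling $x$ by $\lambda$ scales each finite coordinate by $\lambda$ and leaves the support $S$ unchanged, so $\varphi(\lambda x) = \bigoplus_{i \in S}(\lambda x_i) m_i = \lambda \bigoplus_{i \in S} x_i m_i = \lambda \varphi(x)$, using $(\lambda \mu)m = \lambda(\mu m)$ and distributivity of scaling over $\oplus$ in $M$. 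For additivity, write $x$ with support $S$ and $y$ with support $T$; then $x \oplus y$ has support $S \cup T$, with coordinate $\max(x_i, y_i)$ at each $i$ (interpreting a missing coordinate as $-\infty$, which is absorbed). One then checks $\bigoplus_{i \in S \cup T} \max(x_i, y_i)\, m_i = \bigl(\bigoplus_{i \in S} x_i m_i\bigr) \oplus \bigl(\bigoplus_{i \in T} y_i m_i\bigr)$ by comparing contributions index by index, using that $\oplus$ in $M$ is idempotent and commutative and that $\max(x_i,y_i)m_i = x_i m_i \oplus y_i m_i$ when both coordinates are finite (distributivity $(\lambda \oplus \mu)m = \lambda m \oplus \mu m$). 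Finally $\varphi(e_i) = m_i$ holds since $e_i$ has support $\{i\}$ with coordinate $0$, and $0 m_i = m_i$.

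I do not anticipate a serious obstacle here; the only point requiring care is the bookkeeping around the additive identity of $M$ and the fact that $F_k$ omits the all-$(-\infty)$ vector, so that every element genuinely has a non-empty support and the formula for $\varphi$ never produces an empty $\oplus$. One should note explicitly that $F_k$ is closed under $\oplus$ and real scaling (so it is indeed an $\ft$-module, not just a subset), which is what makes the $e_i$ lie in $F_k$ and makes the statement meaningful; this is already observed in the text immediately before the proposition. The argument is essentially a transcription of the classical proof that coordinate space is free, with the idempotency of $\oplus$ playing the role that plays no harm in the semimodule setting.
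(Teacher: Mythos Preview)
Your argument is correct. You verify the universal property of a free $\ft$-module directly: every $x\in F_k$ has non-empty support $S$, hence $x=\bigoplus_{i\in S}x_i e_i$, and this forces and then defines the morphism $\varphi$. The additivity check goes through by splitting $S\cup T$ into $S\cap T$, $S\setminus T$, $T\setminus S$ and using the distributive law $(\lambda\oplus\mu)m=\lambda m\oplus\mu m$ together with associativity and commutativity of $\oplus$ in $M$; idempotency is in fact not needed here, though it does no harm to invoke it.

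The paper proceeds differently. Rather than computing directly, it quotes the (standard) fact that $\trop^k$ is a free $\trop$-module on $\{e_1,\dots,e_k\}$, adjoins a zero to the target $\ft$-module $M$ to obtain a $\trop$-module $M^0$, extends $f$ to a $\trop$-morphism $g:\trop^k\to M^0$ by freeness, and then observes that $g$ restricts to an $\ft$-morphism $F_k\to M$. Uniqueness is inherited from uniqueness of $g$. Your route is more elementary and self-contained, avoiding the appeal to an external result and the zero-adjunction device; the paper's route is shorter once that result is granted, and highlights the relationship between $\ft$-modules and $\trop$-modules via the functor $M\mapsto M^0$. Both arguments ultimately rest on the same underlying computation, yours explicitly and the paper's by delegation.
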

\begin{proof}
It follows from general results about semirings with zero (see for example
\cite[Proposition~17.12]{Golan10}) that $\trop^k$ is a free $\trop$-module of rank $k$, with
free basis $\lbrace e_1, \dots, e_k \rbrace$. We claim that $F_k$ is a free $\ft$-module with the
same basis. Suppose $M$ is an $\ft$-module and $f : \lbrace e_1, \dots, e_k \rbrace \to M$ is a function.
We may obtain from $M$ a $\trop$-module $M^0$ by adjoining a new element $0_M$, and defining
$0_M \oplus m = m \oplus 0_M = m$, $(-\infty) m = 0_M$ and $\lambda 0_M = 0_M$ for all $m \in M^0$
and $\lambda \in \trop$. Now by freeness of $\trop^k$, there is
a unique $\trop$-module morphism $g : \trop^k \to M^0$ extending $f$. It follows from the definition of
$M^0$ that $g$ maps elements of $F_k$ to elements of $M$, so it restricts to an $\ft$-module
morphism $h: F_k \to M$ extending $f$. Moreover, if $h' : F_k \to M$ were another such map, then it
would extend to a distinct morphism from $\trop^k$ to $M^0$ extending $f$, contradicting the
uniqueness of $g$.
\end{proof}

An abstract characterisation of the finitely generated projective modules over a distributive semifield (such as $\trop$), given in terms of direct factors\footnote{Specifically, it is shown that a homomorphism $B$ between free modules $\mathcal{U}$ and $\mathcal{X}$ is (von Neumann) regular if and only of there exists a homomorphism between free modules $\mathcal{X}$ and $\mathcal{Y}$ whose kernel congruence induces a well-defined projection that maps each equivalence class onto a unique representative lying in the image of $B$.}, can be found in \cite[Theorem 15]{Cohen06}. Since our focus is on the connection between projectivity and dimension, we consider the following formulation, parts of which are well known for semirings with zero (see for example \cite[Proposition~17.16]{Golan10}) but which needs slightly more work for $\ft$. Recall that a \textit{retraction} of an algebraic structure is an idempotent endomorphism; the image of a retraction is called a \textit{retract}.

\begin{theorem}\label{thm_projchar}
Let $X$ be a polytope of generator dimension $k$.
Then the following are equivalent:
\begin{itemize}
\item[(i)] $X$ is projective;
\item[(ii)] $X$ is isomorphic to a retract of the free $\ft$-module $F_k$;
\item[(iii)] $X$ is isomorphic to the column space of a $k \times k$ idempotent matrix over $\ft$;
\item[(iv)] $X$ is isomorphic to the column space of an idempotent square matrix over $\ft$.
\end{itemize}
\end{theorem}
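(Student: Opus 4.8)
The plan is to establish the cycle of implications $(iii) \Rightarrow (ii) \Rightarrow (i) \Rightarrow (iv)$ together with the trivial $(iii) \Rightarrow (iv)$, and then close the loop by showing $(iv)$ (or equivalently $(i)$) forces generator dimension to behave well enough that we land back at $(iii)$. Concretely, I would first prove $(ii) \Leftrightarrow (iii)$ directly. If $X \cong C_\ft(A)$ for a $k \times k$ idempotent $A$, then the map $F_k \to F_k$ given by left multiplication by $A$ (which makes sense since $A$ has no $-\infty$ entries forced, but in general we must allow $A$ over $\trop$ — here one checks $A$ maps $F_k$ into $F_k$ provided $A$ has no zero row, which can be arranged since $C_\ft(A)$ has generator dimension $k$) is an idempotent endomorphism with image a copy of the column space; so $X$ is a retract of $F_k$. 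Conversely a retraction $\rho : F_k \to F_k$ is $\ft$-linear, hence (by freeness of $F_k$ on $e_1, \dots, e_k$, Proposition~\ref{prop_freemod}) is given by a $k \times k$ matrix, and idempotency of $\rho$ is exactly idempotency of that matrix; its image is the column space.

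Next I would show $(ii) \Rightarrow (i)$: a retract of a projective module is projective, and $F_k$ is free hence projective, so this is the standard diagram chase — given a surjection $M \twoheadrightarrow N$ and a map $X \to N$, precompose with the retraction $F_k \to X$, lift along freeness of $F_k$, then restrict the lift back to $X$ via the inclusion; one checks the restriction still lifts the original map because the retraction composed with the inclusion is the identity on $X$. For $(i) \Rightarrow (iv)$: since $X$ has generator dimension $k$ it is a quotient of $F_k$ (map the basis vectors onto a generating set), say $\pi : F_k \twoheadrightarrow X$; projectivity of $X$ lets us split $\pi$, giving $\sigma : X \to F_k$ with $\pi \sigma = \mathrm{id}_X$; then $\sigma \pi : F_k \to F_k$ is an idempotent endomorphism, hence by Proposition~\ref{prop_freemod} is a (possibly larger than $k \times k$, but here exactly $k \times k$) idempotent matrix over $\ft$ whose column space is $\sigma(X) \cong X$. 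Finally $(iv) \Rightarrow (iii)$ is where Theorem~\ref{thm_rowcolumn} and Proposition~\ref{prop_matrixdualrank} do the work: if $X \cong C_\ft(E)$ for some $m \times m$ idempotent $E$, the generator dimension of $X$ is $k$ and its dual dimension is the row generator rank of $E$; one argues that for an idempotent matrix these agree (or at worst uses that we may pass to a square matrix of size $\max(k, \text{dual dim})$ and then, via the symmetry between row and column spaces for idempotents, trim down to size $k$), producing a $k \times k$ idempotent with the same column space up to isomorphism.

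I expect the main obstacle to be the passage from an idempotent matrix of \emph{arbitrary} size (item $(iv)$) to one of size exactly $k$ = generator dimension (item $(iii)$), since trimming a matrix generally destroys idempotency. The clean way to handle this is to avoid trimming the idempotent directly: instead observe that $C_\ft(E) \cong X$ has generator dimension $k$, realize $X$ as a retract of $F_{\dim}$ for the relevant dimension, and then reconstruct a genuinely $k \times k$ idempotent from the retraction/section pair built in the $(i) \Rightarrow (iv)$ step, whose size is governed by the number of generators chosen, namely $k$. A secondary technical point is the careful handling of zero rows/columns and the $-\infty$ element: $F_k$ sits inside $\trop^k$, linear endomorphisms of $F_k$ correspond to $k \times k$ matrices over $\trop$ with no zero row, and one must check that the idempotents produced can be taken over $\ft$ itself (or explain the unavoidable excursion into $\trop$), which the paper flags as a recurring issue.
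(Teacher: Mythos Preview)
Your overall cycle matches the paper's: the paper proves $(i) \Rightarrow (ii) \Rightarrow (iii) \Rightarrow (iv) \Rightarrow (i)$, and what you call $(i) \Rightarrow (iv)$ (splitting $\pi : F_k \twoheadrightarrow X$ to get an idempotent endomorphism of $F_k$) already produces a $k \times k$ matrix, so it is really the paper's $(i) \Rightarrow (ii)$ followed by $(ii) \Rightarrow (iii)$. Your separate $(iv) \Rightarrow (iii)$ discussion is therefore unnecessary --- and in any case your sketch of it is circular, since the claim that row and column generator ranks agree for idempotents is essentially Theorem~\ref{thm_equalrank}, proved \emph{after} this theorem using it.

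The real gap is the step you call ``secondary'': showing that the idempotent matrix $E$ representing the retraction of $F_k$ lies in $M_k(\ft)$ rather than merely $M_k(\trop)$. This is not a side issue; it is the only nontrivial content in the entire cycle, and it genuinely uses both hypotheses on $X$ (that $X$ sits inside some $\ft^n$, and that its generator dimension is exactly $k$). The paper's argument runs as follows. Suppose $E_{ij} = -\infty$, so column $j$ has $-\infty$ in position $i$. If some other column $m$ had a finite entry in position $i$, then no $\ft$-scaling of column $m$ could lie below column $j$; but both columns belong to the image of the retraction, which is isomorphic to the polytope $X \subseteq \ft^n$, and in $\ft^n$ any vector can be scaled below any other. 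Hence the entire $i$th row of $E$ is $-\infty$, and in particular $E_{ii} = -\infty$. Now idempotency gives $x_i = \bigoplus_p E_{pi} x_p$ with the $p=i$ term vanishing, so column $i$ is a combination of the remaining $k-1$ columns, and $C_\ft(E) \cong X$ is $(k-1)$-generated --- contradicting the generator-dimension hypothesis. You should make this argument explicit rather than deferring it.
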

\begin{proof}
Suppose (i) holds. Since $X$ is $k$-generated, Proposition~\ref{prop_freemod}
means that
 there is a surjective morphism $\pi : F_k \to X$.
We also have the identity morphism $\iota_M : X \to X$. By projectivity, there is a map
$\psi : X \to F_k$ such that $\pi \circ \psi = \iota_X$. But now $\psi \circ \pi : F_k \to F_k$
is a retraction with image isomorphic to $X$, so (ii) holds.

Now suppose (ii) holds, and let $\pi : F_k \to F_k$ be a retraction with image
isomorphic to $X$. For each standard basis vector $e_i$, define
$$x_i = \pi(e_i) \in F_k$$
and let $E$ be the matrix whose $i$th column is $x_i$. Now viewing $E$ as a
matrix over $\trop$, we
see that
$$E e_i = x_i$$
for each $e_i$. So the action of $E$ agrees with the action of $\pi$
on the standard basis vectors, and hence by linearity on the whole of $F_k$.
Since $E$ clearly also fixes the zero vector in $\trop^k$, this means that
$E$ represents an idempotent map on $\trop^k$, and hence is an idempotent
matrix.

It remains to show that $E \in M_k(\ft)$, that is, that $E$ has no
$-\infty$ entries. Suppose for a contradiction that $E_{ij} = -\infty$.
We claim that $E_{im} = -\infty$ for all $m$. Indeed, if we had $E_{im} \neq -\infty$
then there
would be no $\lambda \in \ft$ such that $\lambda x_m \leq x_i$, which clearly cannot
happen since $x_m$ and $x_i$ lie in $C_\ft(E)$ which is isomorphic to $X$, a subset of
$\ft^n$. Since $E$ is idempotent we have
$$x_i = \bigoplus_{p=1}^k E_{pi} x_p.$$
But since $E_{ii} = -\infty$ this writes $x_i$ as a linear combination
of the other columns. This means that $C_\ft(E)$ is generated by
$k-1$ vectors, which contradicts the assumption that $X$, which is isomorphic
to $C_\ft(E)$, has generator dimension $k$.

That (iii) implies (iv) is obvious.

Finally, suppose (iv) holds, and let $E \in M_m(\ft)$ be an
idempotent matrix with column space
isomorphic to $X$. Then $E$ viewed as a matrix over $\trop$ acts on
$\trop^m$ by left multiplication. Since $E$ does
not contain $-\infty$ it clearly cannot map a non-zero vector to zero, so
it induces an idempotent
map $\pi : F_m \to F_m$ with image $C_\ft(E)$.
Now suppose $A$ and $B$ are $\ft$-modules, $g : C_\ft(E) \to B$ is a morphism
and $f : A \to B$ is a
surjective morphism. By the surjectivity of $f$, for each standard basis vector $e_i$ of $F_m$
we may choose an element $a_i \in A$ such that $f(a_i) = g(\pi(e_i))$. Now since $F_m$ is free
by Proposition~\ref{prop_freemod},
there is a (unique) morphism $q: F_m \to A$ taking each $e_i$ to $a_i$. Now for each $i$ we have
$$f(q(e_i)) = f(a_i) = g(\pi(e_i)),$$
so by linearity, $f(q(x)) = g(\pi(x))$ for all $x \in F_m$. But then by idempotency of $\pi$,
$$f(q(\pi(x))) =  g(\pi(\pi(x))) = g(\pi(x))$$
for all $x \in F_m$. Thus, if we let $p$ be the restriction of $q$ to $\pi(F_m)$ then
we have $f \circ p = g$, as required to show that $\pi(F_m) = C_\ft(E)$ is
projective and hence $X$ is projective.
\end{proof}

Recall that an element $x$ of a semigroup (or semiring) is called
\textit{von Neumann regular}\footnote{In the literature of semigroup theory
such elements are usually just called ``regular''; we use the longer term
``von Neumann regular'' for disambiguation from other concepts of regularity
for tropical matrices (see for example \cite{Butkovic10}).}
if there exists an element $y$
satisfying $xyx = x$; thus a matrix is von Neumann regular as defined
above exactly if it is von Neumann regular in the containing full matrix
semigroup. It is a standard fact from semigroup theory (see
for example \cite{Howie95}) that an element is von Neumann regular exactly
if it is $\GreenD$-related (or equivalently, $\GreenL$-related or $\GreenR$-related)
to an idempotent.

The following result extends to $\ft$ a fact which is known
for a class of semirings including $\trop$ (\cite[Theorem 15]{Cohen97b},
\cite[Theorem 104]{Gaubert98} and \cite[Proposition 5]{Cohen06}).

\begin{proposition}\label{prop_regchar}
Let $A \in M_n(\ft)$. Then the following are equivalent:
\begin{itemize}
\item[(i)] $A$ is a von Neumann regular element of $M_n(\trop)$;
\item[(ii)] $A$ is a von Neumann regular of $M_n(\ft)$;
\item[(iii)] $C_\ft(A)$ is a projective $\ft$-module;
\item[(iv)] $R_\ft(A)$ is a projective $\ft$-module.
\end{itemize}
\end{proposition}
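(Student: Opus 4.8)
The plan is to prove the cycle of implications (i) $\Rightarrow$ (ii) $\Rightarrow$ (iii) $\Rightarrow$ (iv) $\Rightarrow$ (i), using the semigroup-theoretic fact that an element is von Neumann regular exactly when it is $\GreenD$-related to an idempotent, together with Theorem~\ref{thm_green} and Theorem~\ref{thm_projchar}. Only one genuine obstacle arises: passing between the two semigroups $M_n(\trop)$ and $M_n(\ft)$, since a priori a ``witness'' $B$ with $ABA = A$ for $A \in M_n(\trop)$ might have $-\infty$ entries even when $A$ does not.

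First I would show (ii) $\Rightarrow$ (iii). If $A$ is von Neumann regular in $M_n(\ft)$, then it is $\GreenR$-related to an idempotent $E \in M_n(\ft)$, so by Theorem~\ref{thm_green}(ii) we have $C_\ft(A) = C_\ft(E)$; since $E$ is idempotent, Theorem~\ref{thm_projchar} ((iv) $\Rightarrow$ (i)) gives that $C_\ft(E) = C_\ft(A)$ is projective. For (iii) $\Rightarrow$ (iv) I would invoke Theorem~\ref{thm_rowcolumn}: if $C_\ft(A)$ is projective then, being isomorphic to the column space of an idempotent matrix (Theorem~\ref{thm_projchar}), and in turn the row space of an idempotent is isomorphic to its column space (again via Theorem~\ref{thm_projchar} applied to the transpose, noting transposition sends idempotents to idempotents), we get that $R_\ft(A)$ is isomorphic to the row space of an idempotent matrix; but by Theorem~\ref{thm_projchar} ((iv)$\Rightarrow$(i) applied to that transposed idempotent) this row space is projective, so $R_\ft(A)$ is projective. (Alternatively and more directly: $C_\ft(A) \cong C_\ft(E)$ for $E$ idempotent implies $R_\ft(A) \cong R_\ft(E)$ by Theorem~\ref{thm_rowcolumn}, and $R_\ft(E) = C_\ft(E^{T})$ with $E^{T}$ idempotent, which is projective.) By the symmetry of the statement, (iv) $\Rightarrow$ (iii) holds dually, so (iii) and (iv) are equivalent.

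Next, (iii) $\Rightarrow$ (ii): if $C_\ft(A)$ is projective then by Theorem~\ref{thm_projchar} it is isomorphic to $C_\ft(E)$ for some idempotent $E \in M_k(\ft)$ where $k$ is the generator dimension. Then $A$ and (a suitable $n \times n$ padding/embedding of) $E$ have isomorphic column spaces, hence are $\GreenD$-related in $M_n(\ft)$ by Theorem~\ref{thm_green}(iii) — here one must take a little care to produce an honest $n \times n$ idempotent with column space linearly isomorphic to $C_\ft(A)$, which can be done since $C_\ft(A) \subseteq \ft^n$ and linear isomorphism of polytopes is witnessed by matrices; alternatively use the fact noted after Theorem~\ref{thm_projchar} that $\GreenD$-related-to-an-idempotent and $\GreenR$-related-to-an-idempotent coincide. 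Being $\GreenD$-related to an idempotent in $M_n(\ft)$ makes $A$ von Neumann regular in $M_n(\ft)$, giving (ii). The implication (ii) $\Rightarrow$ (i) is immediate, since $M_n(\ft)$ embeds as a subsemigroup of $M_n(\trop)$ and the relation $ABA = A$ is preserved.

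The only remaining, and genuinely substantive, implication is (i) $\Rightarrow$ (ii) — or equivalently (i) $\Rightarrow$ (iii), which would close the cycle. Here I would argue: if $A \in M_n(\ft)$ is von Neumann regular in $M_n(\trop)$, then it is $\GreenD$-related in $M_n(\trop)$ to an idempotent $E \in M_n(\trop)$, so by Theorem~\ref{thm_green}(iii) $C_\trop(A)$ and $C_\trop(E)$ are linearly isomorphic as $\trop$-modules. Since $A$ has no $-\infty$ entries, $C_\trop(A) \setminus \{(-\infty,\dots,-\infty)\}$ is exactly $C_\ft(A)$, and I would argue the corresponding idempotent can be chosen with no $-\infty$ entries either: this is the crux. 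The key point is that von Neumann regularity of $A$ over $\trop$ forces the relevant structure to live in the ``finite'' part — one can show that in the idempotent $E$ obtained, the $\GreenR$-class of $A$ consists of matrices whose column space equals $C_\trop(A)$, which is generated by finite (no-$-\infty$) vectors, and a standard argument (as in the proof of Theorem~\ref{thm_projchar}, (ii)$\Rightarrow$(iii)) shows the canonical idempotent fixing this column space has finite entries precisely because every column of $C_\ft(A)$ dominates a scaling of every other column. Thus $C_\ft(A) = C_\trop(A) \setminus \{\mathbf{-\infty}\}$ is the column space of an idempotent in $M_n(\ft)$, which by Theorem~\ref{thm_projchar} is projective, giving (iii) and closing the cycle. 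I expect this finiteness argument — ruling out $-\infty$ entries in the idempotent witness — to be the main obstacle, and it is exactly the place where the hypothesis $A \in M_n(\ft)$ (rather than merely $A \in M_n(\trop)$) is used.
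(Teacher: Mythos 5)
Your handling of (ii) $\Rightarrow$ (iii) $\Rightarrow$ (iv) matches the paper in spirit, but the implication you yourself identify as the crux --- getting from regularity over $\trop$ back into the finitary world --- is where the proposal has a genuine gap, and the paper's route is completely different and much simpler. The paper proves (i) $\Rightarrow$ (ii) by a direct perturbation: if $ABA = A$ with $B \in M_n(\trop)$, replace each $-\infty$ entry of $B$ by a sufficiently negative real to get $B' \in M_n(\ft)$; since $B' \geq B$ one has $AB'A \geq ABA = A$, and the new finite entries, if negative enough, contribute nothing to any of the maxima, so $AB'A = A$. Your proposed substitute --- that the idempotent $\GreenR$-related to $A$ in $M_n(\trop)$ ``can be chosen with no $-\infty$ entries'' --- is essentially Theorem~\ref{thm_projchar2}, which the paper only proves \emph{later}, using this proposition together with Theorem~\ref{thm_equalrank} and Theorem~\ref{thm_dualrankembedding}; taking it as available here is circular. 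Moreover your justification does not go through: the argument in the proof of Theorem~\ref{thm_projchar} ((ii) $\Rightarrow$ (iii)) that rules out $-\infty$ entries relies on the matrix size equalling the generator dimension of the column space, and when $C_\ft(A)$ has generator dimension $k < n$ there is no ``canonical'' idempotent --- the $\GreenR$-class of $A$ in $M_n(\trop)$ can contain a continuum of idempotents, some of which genuinely have $-\infty$ entries (even entire $-\infty$ columns), e.g.\ for $A$ the $2\times 2$ all-zeros-and-ones matrix of column rank $1$. So the finiteness claim needs a real argument that the proposal does not supply.

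A secondary gap of the same flavour occurs in your (iii) $\Rightarrow$ (ii): Theorem~\ref{thm_green}(iii) applied inside $M_n(\ft)$ requires an $n \times n$ idempotent \emph{over $\ft$} whose column space is isomorphic to $C_\ft(A)$, and padding the $k \times k$ idempotent $E$ with $-\infty$ only produces one over $\trop$. This particular gap is fillable without circularity (for instance, inflate $E$ to $n \times n$ by duplicating its last row and column; the result is still idempotent over $\ft$ with isomorphic column space), but you do not fill it. The paper avoids both issues by arranging the implications as (iii) $\Rightarrow$ (i) (where $-\infty$-padding into $M_n(\trop)$ is harmless) followed by the perturbation argument for (i) $\Rightarrow$ (ii). I recommend you adopt that ordering, or else supply the duplication construction and, separately, the perturbation lemma.
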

\begin{proof}
We prove the equivalence of (i), (ii) and (iii), the equivalence of (i), (ii) and (iv) being dual.

If (i) holds, then $A = ABA$ for some $B \in M_n(\trop)$. An easy calculation
shows that replacing any $-\infty$ entries in $B$ with sufficiently small finite values
yields a matrix $B' \in M_n(\ft)$ satisfying $A = AB'A$, so (ii) holds.

If (ii) holds then $A$ is von Neumann regular, so it is $\GreenR$-related
to an idempotent matrix in $E \in M_n(\ft)$. Now by Theorem~\ref{thm_green}
we have $C_\ft(A) = C_\ft(E)$. But $C_\ft(E)$ is projective by
Theorem~\ref{thm_projchar}, so (iii) holds.

Finally, suppose (iii) holds, so $C_\ft(A)$ is projective, and let $k$ be
the generator dimension of $C_\ft(A)$. Note that $k \leq n$, since
$C_\ft(A)$ is generated by the $n$ columns of $A$. By Theorem~\ref{thm_projchar} there is an idempotent matrix $E \in M_k(\ft)$ such that $C_\ft(E)$ is
isomorphic to $C_\ft(A)$. By adding $n-k$ rows and columns of $-\infty$ entries,
we obtain from $E$ an
idempotent matrix $F \in M_n(\trop)$ satisfying $C_\trop(F) \cong C_\trop(E) = C_\trop(A)$. But now
Theorem~\ref{thm_green} gives $F \GreenD A$, which suffices to establish (i).
\end{proof}


\begin{theorem}\label{thm_equalrank}
Every projective tropical polytope has generator
dimension equal to its dual dimension.
\end{theorem}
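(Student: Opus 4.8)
The plan is to reduce the statement, via Theorem~\ref{thm_projchar}, to a claim about an idempotent square matrix, and then to squeeze the dual dimension between two bounds, the second of which uses the fact that passing to the ``dual'' polytope preserves projectivity.

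First I would fix a projective polytope $X$ and let $k$ be its generator dimension. By Theorem~\ref{thm_projchar} there is an idempotent matrix $E \in M_k(\ft)$ with $X \cong C_\ft(E)$; since generator dimension and dual dimension depend only on the isomorphism class of a module (the partial order, and hence greatest lower bounds, being determined by the module structure), it suffices to show that $C_\ft(E)$ has dual dimension $k$. By Proposition~\ref{prop_matrixdualrank}, the dual dimension of $C_\ft(E)$ equals the generator dimension of $R_\ft(E)$; write $m$ for this number. Since $R_\ft(E)$ is generated by the $k$ rows of $E$ we obtain $m \leq k$ immediately, so the whole content is the reverse inequality $k \leq m$.

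For that, the key observation is that $R_\ft(E)$ is itself projective: an idempotent matrix is von Neumann regular, so Proposition~\ref{prop_regchar} applies to $E$. Hence, by Theorem~\ref{thm_projchar} applied now to $R_\ft(E)$ (which has generator dimension $m$), there is an idempotent matrix $G \in M_m(\ft)$ with $R_\ft(E) \cong C_\ft(G)$. But $C_\ft(G)$ is precisely the row space of the transpose $G^T$, so $R_\ft(E) \cong R_\ft(G^T)$, and Theorem~\ref{thm_rowcolumn}, applied to the matrices $E$ and $G^T$, converts this into a genuine $\ft$-module isomorphism $C_\ft(E) \cong C_\ft(G^T)$. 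Since $C_\ft(G^T)$ is generated by the $m$ columns of the $m \times m$ matrix $G^T$, its generator dimension is at most $m$; as this generator dimension equals that of $C_\ft(E) \cong X$, namely $k$, we get $k \leq m$. Combining with $m \leq k$ yields $k = m$, which is exactly the assertion that the generator dimension of $X$ equals its dual dimension.

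I do not expect a genuine obstacle here: the argument is a short assembly of results already established (Theorems~\ref{thm_projchar} and \ref{thm_rowcolumn}, Propositions~\ref{prop_matrixdualrank} and \ref{prop_regchar}). The only point requiring attention is bookkeeping — one must invoke Theorem~\ref{thm_rowcolumn} at exactly the right moment in order to upgrade the a priori merely order-theoretic duality between a polytope and its dual into an honest module isomorphism, so that generator dimensions may be compared. (An alternative route avoiding Theorem~\ref{thm_rowcolumn} is to compute the dual dimension of $R_\ft(E)$ twice: once via the isomorphism $R_\ft(E) \cong C_\ft(G)$ and Proposition~\ref{prop_matrixdualrank}, and once by applying Proposition~\ref{prop_matrixdualrank} to $E^T$, which identifies it with the generator dimension of $C_\ft(E)$; this again gives $k \leq m$.)
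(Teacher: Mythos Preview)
Your argument is correct, and the overall strategy matches the paper's: use Theorem~\ref{thm_projchar} to realise $X$ as $C_\ft(E)$ for an idempotent $E\in M_k(\ft)$, obtain $m\le k$ from the size of $E$, and for the reverse inequality exploit that $R_\ft(E)$ is again projective (via Proposition~\ref{prop_regchar}). The execution differs slightly. For $m\le k$ the paper invokes Theorem~\ref{thm_dualrankembedding} (the embedding characterisation of dual dimension) rather than Proposition~\ref{prop_matrixdualrank} directly, though these amount to the same thing here. For $k\le m$ the paper simply says the inequality is ``dual by Propositions~\ref{prop_matrixdualrank} and~\ref{prop_regchar}'', meaning: apply the already-proved inequality $(\text{dual dim})\le(\text{gen dim})$ to the projective polytope $R_\ft(E)$, and then use Proposition~\ref{prop_matrixdualrank} (for $E^T$) to identify the dual dimension of $R_\ft(E)$ with the generator dimension of $C_\ft(E)$. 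This is exactly your parenthetical ``alternative route''; your main route via Theorem~\ref{thm_rowcolumn} is a legitimate variant that trades the second application of Proposition~\ref{prop_matrixdualrank} for an explicit isomorphism of column spaces.
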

\begin{proof}
We show that the dual dimension cannot exceed the generator dimension, the
reverse inequality being dual by Propositions~\ref{prop_matrixdualrank} and
\ref{prop_regchar}. Suppose then
for a contradiction that $X$ is projective with dual dimension
$k$ strictly greater than its generator dimension $m$. Then by
Theorem~\ref{thm_dualrankembedding}, $k$ is minimal such that $X$ embeds
in $\ft^k$. But by Theorem~\ref{thm_projchar}, $X$ is isomorphic to
the column space of an $m \times m$ idempotent matrix $E$, say, which
means $X$ embeds in $\ft^m$. Since $m < k$ this is a contradiction.
\end{proof}

Note that Theorem~\ref{thm_projchar} says that every projective polytope is \textit{abstractly isomorphic} to the column space of an idempotent matrix (of size its generator dimension). Since a polytope is itself a submodule
of some $\ft^n$, we might ask whether every projective polytope is \textit{itself} the column space of an idempotent (of size the dimension of the containing space). We conclude this section by noting that \cite[Theorem 104]{Gaubert98} and \cite[Proposition 5]{Cohen06} extend to $\ft$, or in other words, projective polytopes are exactly the column spaces of idempotents:

\begin{theorem}\label{thm_projchar2}
Let $X \subseteq \ft^n$ be a tropical polytope. Then $X$ is projective if
and only if $X$ is the column space of an idempotent matrix in $M_n(\ft)$.
\end{theorem}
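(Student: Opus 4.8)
The plan is to prove the two implications separately, with the forward direction (projective $\Rightarrow$ column space of an idempotent in $M_n(\ft)$) being the substantive one.

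The easy direction is immediate from earlier work: if $X = C_\ft(E)$ for some idempotent $E \in M_n(\ft)$, then $X$ is projective by the implication (iv)$\Rightarrow$(i) of Theorem~\ref{thm_projchar}.

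For the converse, suppose $X$ is projective. The first step is to control the generator dimension of $X$. By Theorem~\ref{thm_equalrank} the generator dimension of $X$ equals its dual dimension, and by Theorem~\ref{thm_dualrankembedding} the dual dimension of a polytope in $\ft^n$ is at most $n$; hence $X$ has generator dimension $m \leq n$. Pick generators $v_1, \dots, v_m$ of $X$ and form the $n \times n$ matrix $A$ over $\ft$ whose first $m$ columns are $v_1, \dots, v_m$ and whose remaining $n-m$ columns repeat $v_1$ (any choice of columns already present works). Then $A \in M_n(\ft)$ and $C_\ft(A) = X$. Now since $X = C_\ft(A)$ is projective, Proposition~\ref{prop_regchar} tells us that $A$ is a von Neumann regular element of $M_n(\ft)$. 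By the standard semigroup-theoretic fact that a von Neumann regular element is $\GreenR$-related to an idempotent, there is an idempotent $E \in M_n(\ft)$ with $A \GreenR E$, and then Theorem~\ref{thm_green}(ii) gives $C_\ft(E) = C_\ft(A) = X$. This exhibits $X$ as the column space of an idempotent matrix in $M_n(\ft)$, completing the proof.

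The only point requiring care is the bound on the generator dimension: a priori a polytope in $\ft^n$ may require more than $n$ generators, so it is essential to invoke projectivity (through Theorems~\ref{thm_equalrank} and \ref{thm_dualrankembedding}) to bring the generator dimension down to at most $n$ before one can assemble the square matrix $A$. Once that is in place, the remainder is simply a matter of combining the already-established facts about Green's relations (Theorem~\ref{thm_green}) and the characterisation of regularity (Proposition~\ref{prop_regchar}).
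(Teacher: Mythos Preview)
Your proof is correct and follows essentially the same route as the paper's own argument: use Theorems~\ref{thm_equalrank} and \ref{thm_dualrankembedding} to bound the generator dimension by $n$, write $X$ as the column space of a square matrix $A \in M_n(\ft)$, apply Proposition~\ref{prop_regchar} to see $A$ is von Neumann regular, and then use the $\GreenR$-relation to an idempotent together with Theorem~\ref{thm_green}(ii). The only difference is that you spell out explicitly how to pad the matrix to size $n \times n$ by repeating a column, which the paper leaves implicit.
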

\begin{proof}
If $X$ is the column space of an idempotent over $\ft$ then
Theorem~\ref{thm_projchar} tells us that $X$ is projective.

Conversely, suppose $X$ is projective. By Theorem~\ref{thm_equalrank}, the
generator dimension of $X$ is equal to dual dimension of $X$, which by
Theorem~\ref{thm_dualrankembedding} cannot exceed $n$. Thus, we may write
$X$ as the column space of an $n \times n$ matrix $A$. By
Proposition~\ref{prop_regchar} this matrix is von Neumann regular as an
element of $M_n(\ft)$, so it
is $\GreenR$-related to an idempotent in $M_n(\ft)$, which by
Theorem~\ref{thm_green} also has column space $X$.
\end{proof}

\section{Order-Theoretic Properties of Projective Polytopes}\label{sec_order}

In this section we establish our order-theoretic characterisation of
projective tropical polytopes. For this, we first need some
elementary order-theoretic properties of idempotent matrices over the
tropical semiring. These will be familiar to experts but to aid the non-specialist
reader we include some short direct proofs.

\begin{proposition}\label{prop_linearorderpreserving}
For any matrix $A \in M_n(\trop)$ and vectors $x, y \in \trop^n$, if
$x \geq y$ then $Ax \geq Ay$ and $xA \geq yA$.
\end{proposition}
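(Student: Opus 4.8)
The plan is to reduce the statement to the monotonicity of the two semiring operations $\oplus = \max$ and $\otimes = +$ on $\trop$, and then argue componentwise. First I would record the elementary fact that on $\trop$ both operations are order-preserving in each argument: if $a \leq a'$ and $b \leq b'$ in $\trop$, then $a \oplus b \leq a' \oplus b'$ and $a \otimes b \leq a' \otimes b'$. For $\oplus = \max$ this is immediate from the definition of the order, and for $\otimes$ it is the usual monotonicity of addition on $\real$ together with the conventions $-\infty \leq x$ and $(-\infty) \otimes x = -\infty$ for all $x$, which ensure that $\otimes$ remains monotone even when some arguments equal $-\infty$.

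Next I would expand the matrix--vector product entrywise. Fix $i \in \{1, \dots, n\}$. By the definition of matrix action, $(Ax)_i = \bigoplus_{k=1}^n A_{ik} \otimes x_k$ and $(Ay)_i = \bigoplus_{k=1}^n A_{ik} \otimes y_k$. The hypothesis $x \geq y$ means $x_k \geq y_k$ for every $k$, so monotonicity of $\otimes$ gives $A_{ik} \otimes x_k \geq A_{ik} \otimes y_k$ for each $k$; applying monotonicity of $\oplus$ iteratively over $k = 1, \dots, n$ then yields $(Ax)_i \geq (Ay)_i$. Since $i$ was arbitrary, $Ax \geq Ay$.

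The inequality $xA \geq yA$ follows by the identical computation with the roles of rows and columns interchanged: for each $j$ we have $(xA)_j = \bigoplus_{k=1}^n x_k \otimes A_{kj} \geq \bigoplus_{k=1}^n y_k \otimes A_{kj} = (yA)_j$, using $x_k \geq y_k$ and the same monotonicity of $\otimes$ and $\oplus$. The only point requiring any attention at all is the behaviour of the element $-\infty$, and this is dealt with once and for all by the monotonicity remark at the start; beyond that the argument is a direct unwinding of the definition of matrix multiplication, so there is no real obstacle here.
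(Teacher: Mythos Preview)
Your proof is correct. The paper takes a different, more algebraic route: it uses the fact that in an idempotent semimodule the order is characterised by $x \geq y \iff x \oplus y = x$, and then applies linearity of the action of $A$ to get $Ax \oplus Ay = A(x \oplus y) = Ax$, i.e.\ $Ax \geq Ay$. Your argument unwinds the matrix product componentwise and appeals directly to monotonicity of $\max$ and $+$; this is more elementary and makes the dependence on the scalar operations explicit, but it is tied to the concrete realisation in $\trop^n$. The paper's argument is shorter and works verbatim for any $\ft$- or $\trop$-linear map between modules, since it uses only idempotency of $\oplus$ and linearity rather than any coordinate expression.
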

\begin{proof}
If $x \geq y$ then $x \oplus y = x$, so by linearity $Ax \oplus Ay = A(x \oplus y) = Ax$ which
means that $Ax \geq Ay$. The other claim is dual.
\end{proof}

The following result (and the corollaries that follow) can be seen as a special case of the
well-developed spectral theory for tropical matrices (see for example \cite[Theorem 3.101]{Baccelli92}).

\begin{lemma}\label{lemma_plentyzeros}
Let $E \in M_n(\trop)$ be an idempotent matrix, and let $x$
be an extremal point of the column space $C_\trop(E)$. Then there exists a
$\lambda \in \ft$ such that $\lambda x$ occurs as a column of $E$ with
$0$ in the diagonal position.
\end{lemma}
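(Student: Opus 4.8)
The plan is to use idempotency of $E$ twice: once to pin down which columns of $E$ are scalings of $x$, and once to force one of those columns to carry a $0$ on the diagonal.

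First I would record that the columns $c_1,\dots,c_n$ of $E$ generate $C_\trop(E)$, so, since $x$ is extremal, the known fact that every generating set of a finitely generated convex set contains a scaling of each extremal point yields an index $j$ with $c_j$ a scaling of $x$. Let $J=\{\, j : c_j=\lambda_j x \text{ for some } \lambda_j\in\ft \,\}$, which is therefore a non-empty subset of $\{1,\dots,n\}$; the scalars $\lambda_j$ are automatically finite because $x$ is non-zero. Idempotency means each column of $E$ is fixed by $E$, that is $c_j = E c_j = \bigoplus_{k=1}^n E_{kj}\,c_k$ for every $j$. Fixing $j\in J$ and substituting $c_k=\lambda_k x$ for $k\in J$ expresses the extremal vector $\lambda_j x$ as a $\oplus$-sum whose $J$-summands are scalings of $x$ and whose remaining summands are the vectors $E_{kj}c_k$ with $k\notin J$. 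By extremality one summand equals $\lambda_j x$; it cannot be one of the latter, since $E_{kj}=-\infty$ would make that summand the zero vector (impossible, as $\lambda_j x\neq 0$) while $E_{kj}\in\ft$ would make $c_k$ a scaling of $x$, contradicting $k\notin J$. Hence there is an index $\sigma(j)\in J$ with $E_{\sigma(j),j}+\lambda_{\sigma(j)} = \lambda_j$.

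Next I would iterate. Since $\sigma$ is a self-map of the finite non-empty set $J$, it has a periodic point: there are $a_0\in J$ and $m\ge 1$ such that $a_s:=\sigma^s(a_0)$ satisfies $a_m=a_0$. Adding the $m$ equalities $E_{a_s,a_{s-1}}+\lambda_{a_s}=\lambda_{a_{s-1}}$ for $s=1,\dots,m$ and cancelling (using $a_m=a_0$) gives $\sum_{s=1}^m E_{a_s,a_{s-1}} = 0$. In graph-theoretic terms this is a closed walk of length $m$ and weight $0$ through the vertex $a_0$, so $(E^m)_{a_0 a_0}\ge 0$; since $E$ is idempotent, $E^m=E$, whence $E_{a_0 a_0}\ge 0$. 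On the other hand, comparing the $(a_0,a_0)$ entries of $E^2=E$ gives $E_{a_0 a_0}+E_{a_0 a_0}\le E_{a_0 a_0}$, hence $E_{a_0 a_0}\le 0$. Therefore $E_{a_0 a_0}=0$, and column $a_0$ of $E$ equals $\lambda_{a_0}x$; taking $\lambda=\lambda_{a_0}$ proves the lemma.

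I expect the main obstacle to be the bookkeeping in the second step: correctly handling the possible $-\infty$ entries of $E\in M_n(\trop)$ in the extremality argument, and organising the resulting family of equalities into the cyclic form that makes the weight-zero walk visible. Once that cycle has been isolated, the conclusion that some diagonal entry vanishes is immediate from $E^m=E$ together with the elementary fact that idempotent tropical matrices have non-positive diagonal entries.
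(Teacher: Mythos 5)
Your proof is correct, and its first half coincides with the paper's: both arguments feed a column $c_j$ that represents $x$ into the relation $c_j = E c_j = \bigoplus_k E_{kj}c_k$ coming from $E^2=E$, and use extremality of $c_j$ (together with non-vanishing) to conclude that a single term $E_{kj}c_k$ with $c_k$ again a scaling of $x$ already equals $c_j$. Where you diverge is in extracting the zero diagonal entry. The paper exploits the identification $E_{kj} = (c_j)_k$: evaluating the one-term relation $c_j = E_{kj}c_k$ at the $k$th coordinate gives $(c_j)_k = E_{kj} + (c_k)_k = (c_j)_k + E_{kk}$, whence $E_{kk}=0$ immediately, with no iteration. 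You instead treat $E_{kj}$ as an unknown weight, obtain the family of equalities $E_{\sigma(j),j}+\lambda_{\sigma(j)}=\lambda_j$, and close the argument by finding a periodic point of $\sigma$, telescoping to a weight-zero closed walk, and combining $(E^m)_{a_0a_0}\ge 0$ with $E^m=E$ and the non-positivity of idempotent diagonal entries. All of these steps check out (in particular your handling of possible $-\infty$ entries and of the case distinction $k\in J$ versus $k\notin J$ is sound), so the result is a valid but longer route; the cycle argument is a standard and robust tropical technique, whereas the paper's shortcut is specific to the fact that the coefficients in the column relation are themselves coordinates of the columns. One cosmetic remark: the scalars $\lambda_j$ are finite by the very definition of your set $J$, not ``because $x$ is non-zero''; what non-vanishing of $x$ actually buys you is that at least one such finite scaling exists and that the dominating summand cannot be the zero vector.
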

\begin{proof}
Clearly every extremal point of $C_\trop(E)$ occurs (up to scaling) as a
column of $E$, since they are by definition needed to generate the column
space. Let $c_1, \dots, c_n$ be the columns of $E$, and suppose $c_i$
is an extremal point.

Considering the equation $E^2 = E$, we have
$$c_i = \bigoplus_{j=1}^n E_{ji} c_j = \bigoplus_{j=1}^n (c_i)_j c_j.$$
Since $c_i$ is extremal, it must in fact be equal to one of the terms
in this sum, say
$$c_i = E_{ji} c_j = (c_{i})_j c_j,$$
giving that $c_j$ is a multiple of $c_i$.
Moreover, since $c_i$ is extremal, and hence not the zero vector, it follows that
$(c_i)_j \neq -\infty$. But now
$$(c_i)_j = ((c_i)_j c_j)_j = (c_i)_j (c_j)_j$$
which since $(c_i)_j \neq -\infty$ means that $(c_j)_j = E_{jj} = 0$.
Since $c_j$ is a multiple of $c_i$, this completes the proof.
\end{proof}

\begin{corollary}
If $E \in M_n(\trop)$ is an idempotent matrix of column generator
rank $n$ (or row generator rank $n$) then every diagonal entry of $E$ is $0$.
\end{corollary}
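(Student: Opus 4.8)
The plan is to derive this immediately from Lemma~\ref{lemma_plentyzeros}, once we know that in the situation at hand every column of $E$ is an extremal point of $C_\trop(E)$ and that no two columns are proportional. So first I would treat the case of column generator rank $n$. Since $E$ is $n \times n$, its $n$ columns generate $C_\trop(E)$, and if this column space has generator dimension $n$ then the columns form a minimal generating set. By the facts recalled in Section~\ref{sec_prelim} --- a finitely generated convex set is generated by its extremal points, every generating set contains a scaling of each extremal point, and any scaling of an extremal point is again extremal --- it follows that $C_\trop(E)$ has exactly $n$ extremal points up to scaling and that, up to scaling, these are precisely the $n$ columns $c_1, \dots, c_n$ of $E$, one each; in particular each $c_i$ is extremal. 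Moreover no column can be a scaling of another (nor the zero vector), since otherwise $n-1$ of the columns would already generate $C_\trop(E)$, contradicting the generator dimension being $n$.

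Next I would apply Lemma~\ref{lemma_plentyzeros} to the extremal point $c_i$, for each $i$ in turn: it provides a scalar $\lambda \in \ft$ and a column index $j$ such that the $j$th column of $E$ equals $\lambda c_i$ and has $0$ in its diagonal position, i.e.\ $E_{jj} = 0$. Since the $j$th column is then proportional to $c_i$, the previous paragraph forces $j = i$ (and $\lambda = 0$). Hence $E_{ii} = (c_i)_i = 0$, and as $i$ was arbitrary every diagonal entry of $E$ vanishes.

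Finally, the case of row generator rank $n$ is dual: one may simply apply the column case to the transpose $E^{\mathrm{T}}$, which is again idempotent since $(E^{\mathrm{T}})^2 = (E^2)^{\mathrm{T}} = E^{\mathrm{T}}$, whose column generator rank equals the row generator rank of $E$, and whose diagonal coincides with that of $E$. I do not expect any real obstacle here; the only point needing a little care is the observation that distinct columns of $E$ are pairwise non-proportional, which is precisely what ensures that the column $\lambda c_i$ delivered by Lemma~\ref{lemma_plentyzeros} must be $c_i$ itself, sitting in its own diagonal position.
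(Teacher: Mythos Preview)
Your argument is correct and is exactly the intended derivation: the paper states this as an unproven corollary of Lemma~\ref{lemma_plentyzeros}, and your proof fills in precisely the details one would expect---that under the rank-$n$ hypothesis the columns are pairwise non-proportional extremal points, so the column produced by the lemma for $c_i$ must be $c_i$ itself. The transpose argument for the row case is also fine.
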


\begin{corollary}\label{cor_nonsingidptincreasing}
If $E \in M_n(\trop)$ is an idempotent matrix of column generator
rank $n$ (or row generator rank $n$) then $Ex \geq x$ and $xE \geq x$
for all $x \in \trop^n$.
\end{corollary}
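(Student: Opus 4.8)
The plan is to read this off directly from the preceding corollary, which guarantees that in either of the two hypothesised cases (column generator rank $n$ or row generator rank $n$) we have $E_{ii} = 0$ for every index $i$. So I would begin by invoking that corollary to fix all diagonal entries of $E$ at the multiplicative identity $0$ of $\trop$, and then carry out a one-line componentwise computation.

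Concretely, for any $x \in \trop^n$ and any index $i$, I would write
$$(Ex)_i \;=\; \bigoplus_{j=1}^n E_{ij}\, x_j \;\geq\; E_{ii}\, x_i \;=\; 0 \otimes x_i \;=\; x_i,$$
where the inequality is just the fact that a tropical sum dominates each of its summands in the order on $\trop$ (extended so that $-\infty$ lies below everything). This holds for every $i$, so $Ex \geq x$. The inequality $xE \geq x$ follows from the symmetric computation $(xE)_j = \bigoplus_i x_i E_{ij} \geq x_j E_{jj} = x_j$, again using that the diagonal entries of $E$ are $0$. Note that both inequalities are obtained regardless of which of the two hypotheses one starts from, since the preceding corollary already shows that either hypothesis forces every diagonal entry to be $0$.

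There is no genuine obstacle here; the result is a formal consequence of the diagonal computation in the corollary above. The only point that warrants a moment's attention is the treatment of $-\infty$ entries: if $x_i = -\infty$ then $E_{ii}\, x_i = 0 \otimes (-\infty) = -\infty = x_i$ and the inequality $(Ex)_i \geq x_i$ still holds (indeed trivially, since $-\infty$ is least in the order), so the argument goes through verbatim over $\trop$ rather than merely over $\ft$.
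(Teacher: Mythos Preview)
Your proposal is correct and is exactly the argument the paper has in mind: the corollary is stated without proof precisely because it follows at once from the preceding corollary (all diagonal entries equal $0$) via the one-line componentwise computation you give. Your remark about the $-\infty$ case is a nice bit of care, but nothing more is needed.
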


\begin{proposition}\label{prop_nonsingidpt_minconvex}
If $E \in M_n(\trop)$ [respectively, $E \in M_n(\ft)$] is an idempotent
matrix of column generator rank
$n$ (or row generator rank $n$) then $C_\trop(E)$ and $R_\trop(E)$
[respectively, $C_\ft(E)$ and $R_\ft(E)$] are min-plus convex.
\end{proposition}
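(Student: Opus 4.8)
The plan is to prove the statement for $C_\trop(E)$ when $E \in M_n(\trop)$ is idempotent of column generator rank $n$; the claim for $R_\trop(E)$ follows by applying the argument to $E^T$ (which is idempotent of row generator rank $n$), and the finitary case $E \in M_n(\ft)$ follows because, as noted in the preliminaries, a polytope in $\ft^n$ has the same extremal structure whether viewed inside $\ft^n$ or $\trop^n$, and in any case $C_\ft(E) = C_\trop(E) \cap \ft^n$ is then closed under componentwise minimum as a subset. Since $C_\trop(E)$ is already closed under scaling, it suffices to show it is closed under componentwise minimum; I will write $\wedge$ for the componentwise minimum in $\trop^n$.

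The key observation is that, by Corollary~\ref{cor_nonsingidptincreasing}, the hypothesis on $E$ gives $Ez \geq z$ for every $z \in \trop^n$. I would use this together with the order-preservation of Proposition~\ref{prop_linearorderpreserving} as follows. Let $u, v \in C_\trop(E)$, so $Eu = u$ and $Ev = v$ (every element of the column space is fixed by $E$, since $E$ is idempotent and acts as the identity on its own column space — this is immediate from $E^2 = E$ applied to a linear combination of columns). Set $w = u \wedge v$. Since $w \leq u$, Proposition~\ref{prop_linearorderpreserving} gives $Ew \leq Eu = u$; similarly $Ew \leq Ev = v$; hence $Ew \leq u \wedge v = w$. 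On the other hand $Ew \geq w$ by Corollary~\ref{cor_nonsingidptincreasing}. Therefore $Ew = w$, so $w = Ew \in C_\trop(E)$. This shows $C_\trop(E)$ is min-plus convex.

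The only point requiring a little care — and the place I expect to have to be slightly careful — is the bookkeeping in passing between $\trop$ and $\ft$: when $E \in M_n(\ft)$ has column generator rank $n$, I want to conclude min-plus convexity of $C_\ft(E)$ itself. Viewing $E$ as a matrix over $\trop$, it still has column generator rank $n$ (its columns are real vectors, none redundant), so the argument above shows $C_\trop(E)$ is min-plus convex; and since $E$ has no $-\infty$ entries it cannot send a nonzero vector to the zero vector, so $C_\trop(E) \setminus \{(-\infty,\dots,-\infty)\} = C_\ft(E)$, and the componentwise minimum of two real vectors is again real, giving closure of $C_\ft(E)$ under $\wedge$. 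The dual claims for row spaces are obtained by transposing throughout, using that $xE \geq x$ from Corollary~\ref{cor_nonsingidptincreasing} and the right-hand versions of Propositions~\ref{prop_linearorderpreserving}.
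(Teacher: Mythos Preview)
Your argument is correct and essentially identical to the paper's own proof: take the componentwise minimum $w$ of two column-space elements, use Proposition~\ref{prop_linearorderpreserving} to get $Ew \leq w$, use Corollary~\ref{cor_nonsingidptincreasing} to get $Ew \geq w$, and conclude $w = Ew \in C_\trop(E)$. The paper dispatches the row-space and $\ft$ variants in a single phrase (``dual'' and ``very similar''), whereas you spell out the bookkeeping more carefully, but there is no substantive difference.
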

\begin{proof}
We prove the claim for $C_\trop(E)$, that for the row space
being dual and the $\ft$ cases very similar. Suppose $a, b \in C_\trop(E)$,
and let $c$ be the componentwise minimum of $a$ and $b$. It
will suffice to show that $c \in C_\trop(E)$. Since $a,b \in C_\trop(E)$ and
$E$ is idempotent, we have $a = Ea$ and $b = Eb$. Since $c \leq
a$ and $c \leq b$, by Proposition~\ref{prop_linearorderpreserving}
we have $Ec \leq E a = a$ and $Ec \leq Eb = b$. This means
that $Ec \leq \min\{a,b\} = c$. But by
Corollary~\ref{cor_nonsingidptincreasing} we have $Ec \geq c$, so
it must be that $Ec = c$ and $c \in C_\trop(E)$, as required.
\end{proof}

\begin{proposition}\label{prop_nonsingidpt_desc}
Let $E \in M_n(\trop)$ be an idempotent of column generator rank $n$.
Then
for any vector $x \in \trop^n$, the vector $Ex$ is the minimum (with respect
to the partial order $\leq$)
of all elements $y \in C_\trop(E)$ such that $y \geq x$.
\end{proposition}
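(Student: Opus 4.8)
The plan is to verify the three defining properties of the claimed minimum in turn: that $Ex$ lies in $C_\trop(E)$, that $Ex \geq x$, and that $Ex$ is below every other element of $C_\trop(E)$ lying above $x$.

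First I would observe that $Ex$ is by definition a tropical linear combination of the columns of $E$, hence $Ex \in C_\trop(E)$. Second, the inequality $Ex \geq x$ is exactly the content of Corollary~\ref{cor_nonsingidptincreasing}, which applies since $E$ has column generator rank $n$. So $Ex$ is indeed an element of $C_\trop(E)$ lying above $x$, and it remains only to establish minimality.

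For minimality, let $y \in C_\trop(E)$ be any vector with $y \geq x$. The key point is that any element of the column space is fixed by $E$: writing $y = Ez$ for some $z \in \trop^n$ and using idempotency, $Ey = E(Ez) = E^2 z = Ez = y$. Now apply Proposition~\ref{prop_linearorderpreserving} to the inequality $y \geq x$ to obtain $Ey \geq Ex$, that is, $y \geq Ex$. Since $y$ was an arbitrary element of $C_\trop(E)$ above $x$, this shows $Ex$ is the least such element, completing the proof.

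I do not anticipate any real obstacle here: the argument is a short combination of Corollary~\ref{cor_nonsingidptincreasing} (for the lower bound $Ex \geq x$) and Proposition~\ref{prop_linearorderpreserving} together with idempotency (for minimality). The only point requiring a moment's care is the observation that $E$ fixes $C_\trop(E)$ pointwise, which is where idempotency of $E$ is used; the column-generator-rank hypothesis enters only through its use in Corollary~\ref{cor_nonsingidptincreasing}.
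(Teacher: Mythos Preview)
Your proposal is correct and follows essentially the same approach as the paper's own proof: both establish $Ex \in C_\trop(E)$ and $Ex \geq x$ via Corollary~\ref{cor_nonsingidptincreasing}, then prove minimality by observing that $E$ fixes $C_\trop(E)$ pointwise (by idempotency) and applying Proposition~\ref{prop_linearorderpreserving}. The only cosmetic difference is that you spell out $Ey = E(Ez) = E^2z = Ez = y$ explicitly, whereas the paper simply asserts $Ez = z$ for $z \in C_\trop(E)$.
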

\begin{proof}
By definition we have $Ex \in C_\trop(E)$ and by
Corollary~\ref{cor_nonsingidptincreasing} we have $Ex \geq x$, so
$Ex$ is itself an element of $C_\trop(E)$ which lies above $x$. Thus, it
will suffice to show that every other such element lies above $Ex$.
Suppose, then that $z \in C_\trop(E)$ and $z \geq x$. Since $z \in C_\trop(E)$
and $E$ is idempotent we have $Ez = z$. But since $z \geq x$,
Proposition~\ref{prop_linearorderpreserving} gives $z = Ez \geq Ex$,
as required.
\end{proof}

We note that Proposition~\ref{prop_nonsingidpt_minconvex} can also be
deduced as a consequence of Proposition~\ref{prop_nonsingidpt_desc}.
Proposition~\ref{prop_nonsingidpt_desc} has the following interesting
semigroup-theoretic corollary:
\begin{theorem}\label{thm_uniqueidpts}
Any $\GreenR$-class [$\GreenL$-class] in $M_n(\trop)$ consisting of matrices
of column generator rank $n$ [or row generator rank $n$] contains at most one
idempotent.
\end{theorem}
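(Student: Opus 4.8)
The plan is to prove the $\GreenR$-class statement, the $\GreenL$-class statement being entirely dual. So suppose $E$ and $F$ are idempotent matrices lying in a single $\GreenR$-class of $M_n(\trop)$, where by hypothesis every matrix in this class has column generator rank $n$, or alternatively every matrix in this class has row generator rank $n$. The goal is to show $E = F$, and the strategy is to show that each of $E$ and $F$ is uniquely determined by the shared column space $X := C_\trop(E) = C_\trop(F)$, the equality here coming from Theorem~\ref{thm_green}(ii).

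The main tool is Proposition~\ref{prop_nonsingidpt_desc}: for an idempotent $E$ of column generator rank $n$ and any $x \in \trop^n$, the vector $Ex$ is the least element of the set $S_x := \{ y \in C_\trop(E) : y \geq x \}$. Since this description of $Ex$ refers only to $x$ and to the column space $C_\trop(E) = X$, and not otherwise to $E$, applying it to both $E$ and $F$ gives $Ex = \min S_x = Fx$ for every $x \in \trop^n$. Taking $x$ to range over the standard basis vectors $e_1, \dots, e_n$ then shows that $E$ and $F$ have identical columns, whence $E = F$.

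The one gap to fill is that Proposition~\ref{prop_nonsingidpt_desc} is stated only for idempotents of column generator rank $n$, whereas the theorem also covers $\GreenR$-classes of row generator rank $n$; this is the only step I expect to require care. I would resolve it by noting that the proof of Proposition~\ref{prop_nonsingidpt_desc} uses its hypothesis solely through Corollary~\ref{cor_nonsingidptincreasing} (to conclude $Ex \geq x$), and that corollary applies equally to idempotents of row generator rank $n$. Concretely, for such an $E$: the vector $Ex$ lies in $X$ and satisfies $Ex \geq x$ by Corollary~\ref{cor_nonsingidptincreasing}; and if $z \in X$ with $z \geq x$, then idempotency gives $z = Ez$ and Proposition~\ref{prop_linearorderpreserving} gives $z = Ez \geq Ex$ — so once again $Ex = \min S_x$ (the minimum being legitimate since $S_x$ contains $Ex$), and the argument of the previous paragraph goes through unchanged. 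The $\GreenL$-class case is handled symmetrically, working with right multiplication and row spaces and using the fact that $E \GreenL F$ forces $R_\trop(E) = R_\trop(F)$ by Theorem~\ref{thm_green}(i).
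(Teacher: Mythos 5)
Your proof is correct and follows essentially the same route as the paper: both deduce $C_\trop(E)=C_\trop(F)$ from Theorem~\ref{thm_green}(ii) and then use Proposition~\ref{prop_nonsingidpt_desc} on the standard basis vectors to see that an idempotent is determined by its column space. Your extra observation that Proposition~\ref{prop_nonsingidpt_desc} extends to idempotents of row generator rank $n$ via Corollary~\ref{cor_nonsingidptincreasing} is a point the paper's proof passes over silently, and it is handled correctly.
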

\begin{proof}
We prove the claim for $\GreenR$-classes, that for $\GreenL$-classes being
dual. Let $E$ be an idempotent such that $C_\trop(E)$ has generator rank $n$.
For $i \in \lbrace 1, \dots, n \rbrace$, applying Proposition~\ref{prop_nonsingidpt_desc} with $x = e_i$ the $i$th
standard basis vector shows that the $i$th column $E e_i$ of $E$ is
the minimum element of $C_\trop(E)$ greater than or equal to $e_i$. Thus, $E$ is
completely determined by its column space and the fact that it is idempotent.
Now if $F$ were another idempotent $\GreenR$-related to $E$ then by
Theorem~\ref{thm_green}(ii) we would have $C_\trop(E) = C_\trop(F)$, which by the
preceding argument would mean that $E = F$.
\end{proof}

Note the row or column generator rank hypothesis in Theorem~\ref{thm_uniqueidpts}
cannot be removed. Indeed, in \cite{K_tropicalgreen} it was shown that
every $\GreenH$-class corresponding to a $1$-generated column space in
$M_2(\trop)$ contains an idempotent, so the corresponding $\GreenR-$
and $\GreenL$-classes each contain a continuum of idempotents.

We are now ready to prove our first main result, namely Theorem~\ref{thm_order1}
from the introduction.

\begin{theorem14}
A tropical polytope in $\ft^n$ of generator dimension $n$ and dual dimension $n$
is a projective $\ft$-module if and only if it is min-plus (as well as max-plus) convex.
\end{theorem14}
\begin{proof}
The direct implication is immediate from Theorems~\ref{thm_projchar2} and
\ref{prop_nonsingidpt_minconvex}, so we need only prove the converse.

Suppose, then, that $X \subseteq \ft^n$ is min-plus and max-plus convex, and let $M$ be the matrix whose
$i$th column is the infimum (in $\ft^n$) of all elements $y \in X$
such that $y \geq e_i$, where $e_i$ is the $i$th standard basis vector (that
is, such that $y$ has non-negative $i$th coordinate). Such an infimum
exists. Indeed, if $n=1$ take $y=0$. Otherwise,
for each coordinate $j \neq i$,
consider the set
$$\lbrace u_j \mid u \in X \textrm{ with } u_i \geq 0 \rbrace.$$
It is easy to see that this set is non-empty and, since $X$ is finitely
generated, it has a
lower bound and
hence an infimum. It follows from the fact $X$ is closed that
this infimum will be attained; choose a vector $w_j \in X$ such that
$w_{jj}$ attains it at $w_{ji} \geq 0$. In fact, by the minimality of
$w_{jj}$ and the fact that $X$ is closed under scaling, we will have
$w_{ji} = 0$. Now let $v$ be the minimum of all the $w_j$'s. Then $v_i = 0$
and $v$ is clearly less than or equal to all vectors $u \in X$ with
$u_i \geq 0$. Moreover, by min-plus convexity, it lies in $X$, which
means it must be the desired minimum.

Notice that since $X$ is closed under scaling, it will have elements
in which the $i$th coordinate is $0$. It follows that the $i$th column of $M$
will in fact have $i$th coordinate $0$, that is, that every diagonal entry
of $M$ is $0$. We have shown that every column of $M$ lies in $X$, so
$C_\ft(M) \subseteq X$. We aim to show that $M$ is idempotent with column
space $X$.

First, we claim that each column of $M$ is an extremal point of $X$.
Indeed, suppose for a contradiction that the $p$th column, call it $y$,
is not an extremal point of $X$. Then by definition we may write $y$ as
a finite sum of elements in $X$ which are not multiples of $y$, say
$y = z_1 \oplus \dots \oplus z_k$. Let $j$ be such that $z_j$ agrees with
$y$ in the $p$th coordinate. Then $z_j < y$ (since $z_j$ forms part
of a linear combination for $y$, and was chosen not to be a multiple of $y$)
but $z_j \geq e_p$ and $z_j \in X$, which contradicts the choice of $y$
as the minimum element in $X$ above $e_p$.

Next, we claim that no two columns of $M$ are scalings of one another.
Indeed, suppose the $i$th column $v_i$ and $j$th column $v_j$ are scalings
of one another. For any $x \in X$ we have
$(-x_i) x \geq e_i$ and $(-x_i) x \in X$, so by the definition of $v_i$ we
have $v_i \leq (-x_i)x$. Thus, using the fact that $v_{ii} = 0$,
$$v_{ij} - v_{ii} \leq (-x_i) x_j = x_j - x_i.$$
By applying the same argument with $i$ and $j$ exchanged we also obtain
$$v_{ji} - v_{jj} \leq (-x_j) x_i = x_i - x_j.$$
But since $v_i$ is a multiple of $v_j$, we have $v_{ji} - v_{jj} = v_{ii} - v_{ij}$
so negating we get
$$v_{ij} - v_{ii} \geq x_j - x_i.$$
Thus we have shown that $x_j - x_i = v_{ij} - v_{ii}$ for every $x \in X$. In
other words, the $j$th entry of every vector in $X$ is determined by the $i$th
entry. This implies that $X$ embeds linearly into $\ft^{n-1}$, which
by Theorem~\ref{thm_dualrankembedding} contradicts the fact that $X$ has dual
dimension $n$.

We have shown that the $n$ columns of $M$ are extremal points of $X$, and that
no two are scalings of each other. Since $X$ has generator dimension $n$ it has
precisely $n$ extremal points up to scaling, so we conclude that every extremal
point of $X$ must occur (up to scaling) as a column of $M$. Thus,
$X \subseteq C_\ft(M)$, and so $C_\ft(M) = X$.

Finally, we need to show that $M$ is idempotent. We have already observed that
every diagonal entry of $M$ is $0$. It follows from the definition of matrix
multiplication that for all $i$ and $j$,
$$(M^2)_{ij} \geq M_{ij} M_{jj} = M_{ij} 0 = M_{ij}.$$
Now let $i, j, k \in \lbrace 1, \dots, n \rbrace$. To complete the proof, it
will suffice to show that $M_{ij} \geq M_{ik} M_{kj}$.
Let $v_j$ and $v_k$ be the $j$th and $k$th columns of $M$, and
consider the vector $w = (-M_{kj}) v_j = (-v_{jk}) v_j$. Then $w$ lies in $X$ and has $k$th component $0$, so
by the definition of $M$, $w$ is greater than $v_k$. In particular, comparing
the $i$th entries of these vectors, we have
$$(-M_{kj}) M_{ij} = w_i \geq v_{ki} = M_{ik}$$
and so
$$M_{ij} \geq M_{ik} M_{kj}$$
as required.
\end{proof}

Combining Theorem~\ref{thm_order1} with Proposition~\ref{prop_regchar}
yields an
order-theoretic characterisation of von Neumann regularity for
matrices of full column and row generator rank over $\ft$.

\begin{theorem}\label{thm_minmax}
Let $M \in M_n(\ft)$ be a matrix of column generator rank $n$
and row generator rank $n$. Then the following are equivalent:
\begin{itemize}
\item[(i)] $M$ is von Neumann regular;
\item[(ii)] $C_\ft(M)$ is min-plus convex;
\item[(iii)] $R_\ft(M)$ is min-plus convex.
\end{itemize}
\end{theorem}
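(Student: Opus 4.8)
The plan is to deduce this directly from Theorem~\ref{thm_order1} together with Propositions~\ref{prop_regchar} and \ref{prop_matrixdualrank}; the only real work is translating the two rank hypotheses into statements about generator dimension and dual dimension of the column and row spaces.

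First I would record that the hypothesis that $M$ has column generator rank $n$ says precisely that the polytope $C_\ft(M) \subseteq \ft^n$ has generator dimension $n$, and similarly that $R_\ft(M) \subseteq \ft^n$ has generator dimension $n$. By Proposition~\ref{prop_matrixdualrank}, the dual dimension of $C_\ft(M)$ equals the generator dimension of $R_\ft(M)$, which is $n$; by the dual of Proposition~\ref{prop_matrixdualrank} (applied to the transpose of $M$, using that transposition is a module isomorphism between $R_\ft(M)$ and a column space), the dual dimension of $R_\ft(M)$ equals the generator dimension of $C_\ft(M)$, which is also $n$. Hence both $C_\ft(M)$ and $R_\ft(M)$ are tropical polytopes in $\ft^n$ of generator dimension $n$ and dual dimension $n$, so Theorem~\ref{thm_order1} applies to each: $C_\ft(M)$ is projective if and only if it is min-plus convex, and likewise $R_\ft(M)$ is projective if and only if it is min-plus convex.

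Next I would invoke Proposition~\ref{prop_regchar}, which gives that $M$ is a von Neumann regular element of $M_n(\ft)$ if and only if $C_\ft(M)$ is projective, if and only if $R_\ft(M)$ is projective. Combining this chain of equivalences with the two equivalences from the previous paragraph yields (i)~$\Leftrightarrow$~(ii) and (i)~$\Leftrightarrow$~(iii), which is the assertion.

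There is essentially no serious obstacle here: the statement is a repackaging of results already established. The one point requiring a moment's care is keeping straight which of the two ``rank $n$'' hypotheses supplies the generator dimension and which the dual dimension for each of the row and column spaces; this bookkeeping is exactly what Proposition~\ref{prop_matrixdualrank} and its dual are for, and it is what guarantees that Theorem~\ref{thm_order1} is applicable to $C_\ft(M)$ and to $R_\ft(M)$ simultaneously.
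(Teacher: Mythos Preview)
Your proposal is correct and follows essentially the same route as the paper: invoke Proposition~\ref{prop_regchar} to translate von Neumann regularity into projectivity of the column (respectively row) space, and then apply Theorem~\ref{thm_order1}. The paper's proof is terser and leaves the verification of the dual dimension hypothesis of Theorem~\ref{thm_order1} implicit, whereas you spell out, via Proposition~\ref{prop_matrixdualrank}, why each of $C_\ft(M)$ and $R_\ft(M)$ has both generator dimension and dual dimension equal to $n$; this extra bookkeeping is helpful and does not change the argument.
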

\begin{proof}
By Proposition~\ref{prop_regchar}, $M$ is von Neumann regular if and
only if $C_\ft(M)$ is projective, which by Theorem~\ref{thm_order1} is
true exactly if $C_\ft(M)$ is min-plus convex. A similar argument applies
to $R_\ft(M)$.
\end{proof}

Next we prove Theorem~\ref{thm_order2} from the introduction.

\begin{theorem15}
A tropical polytope is projective if and only if it has
generator dimension equal to its dual dimension (equal to $k$, say), and is
linearly isomorphic to a submodule of $\ft^k$ that is min-plus convex (as
well as max-plus convex).
\end{theorem15}

\begin{proof}
Suppose $X \subseteq \ft^n$ is finitely generated and projective. By
Theorem~\ref{thm_equalrank}
it has generator dimension equal to its dual dimension; let $k$ be this value.
By Theorem~\ref{thm_projchar}, $X$ is isomorphic to the column space
of an idempotent matrix in $M_k(\ft)$. This column space is projective
and has generator dimension $k$ and dual dimension $k$, and so by
Theorem~\ref{thm_order1} is min-plus convex as well as max-plus
convex.

Conversely, if $X$ has dual dimension and generator dimension $k$ and is isomorphic
to a convex set in $\ft^k$ which is min-plus as well as max-plus convex,
then $X$ is projective by Theorem~\ref{thm_order1}.
\end{proof}

\section{Geometric characterisation of projective polytopes}\label{sec_geometry}

Our aim in this section is to prove Theorem~\ref{thm_geometric}, which gives a
geometric characterisation of projective tropical polytopes in terms of pure
dimension, generator dimension and dual dimension.

We require some preliminary terminology, notation
and results from \cite{Develin04}.
Let $v_1, \dots, v_k \in \ft^n$ and let $X \subseteq \ft^n$ be the polytope
they generate. Let $x \in \ft^n$. The \textit{type} of $x$ (with respect to
the vectors $v_1, \dots, v_k$) is an $n$-tuple of sets, the $p$th component
of which consists
of the indices of those generators which can contribute in the $p$th position
to a linear combination for $x$.
Formally,
\begin{align*}
\type(x)_p &= \lbrace i \in \lbrace 1, \dots, k \rbrace \mid \exists \lambda \in \ft \text{ such that } \lambda v_i \leq x \text{ and } \lambda v_{ip} = x_p \rbrace \\
&= \lbrace i \in \lbrace 1, \dots, k \rbrace \mid x_p - v_{ip} \leq x_q - v_{iq} \text{ for all } q \in \lbrace 1, \dots, n \rbrace \rbrace.
\end{align*}
It is easily seen that $X$ itself consists of those vectors whose types have every
component non-empty.

For a given type $S$ we write $S_p$ for the $p$th component of $S$. We denote
by $G_S$ the
undirected graph having vertices $\lbrace 1, \dots, n \rbrace$, and an edge
between $p$ and $q$ if
and only if $S_p \cap S_q \neq \emptyset$. We define union and inclusion for
types in the obvious way: if $S$ and $T$ are types then $S \cup T$ is the
type given by $(S \cup T)_i = S_i \cup T_i$, and $S \subseteq T$ if
$S_i \subseteq T_i$ for all $i$, that is, if $S \cup T = T$.
 We write $X_S$ for the set of all
points having type \textbf{containing} $S$; it is readily verified that $X_S$
is a closed set of pure dimension. A \textit{face} of $X$ is a set $X_S$ such
that $S$ is the type of a point of $X$.

We require the following result of Develin and
Sturmfels \cite{Develin04}, which we rephrase slightly
for compatibility with the
terminology and conventions of the present paper (\cite{Develin04} instead
using the min-plus convention and using the term ``dimension'' to mean projective
dimension).

\begin{lemma}[{\cite[Proposition 17]{Develin04}}]\label{lemma_graphdimension}
With notation as above, the tropical dimension of $X_S$ is the number
of connected components in $G_S$.
\end{lemma}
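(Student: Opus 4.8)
The plan is to realise $X_S$ as an explicit convex polyhedron in $\real^n$ and to read its dimension off the combinatorics of $G_S$. First I would rewrite the defining condition using the second displayed formula for $\type(x)_p$: asking that $S_p \subseteq \type(x)_p$ for every $p$ amounts exactly to the system of linear inequalities
\[
x_p - x_q \;\le\; v_{ip} - v_{iq} \qquad \text{for all $p$, all $i \in S_p$, and all $q \in \{1,\dots,n\}$.}
\]
So $X_S$ is a closed convex polyhedron, each of whose facets has the form $\{x : x_p - x_q \le c\}$, and it is invariant under translation along $(1,\dots,1)$. Since the topological (``tropical'') dimension of a convex polyhedron equals the dimension of its affine hull, it suffices to compute the latter.

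For the upper bound I would observe that each edge of $G_S$ forces an equality. If $i \in S_p \cap S_q$, then the inequality above for this $i$, $p$, $q$ reads $x_p - x_q \le v_{ip} - v_{iq}$, while the same inequality with $p$ and $q$ interchanged reads $x_q - x_p \le v_{iq} - v_{ip}$; together these force $x_p - x_q = v_{ip} - v_{iq}$ on all of $X_S$, and (as $X_S$ is nonempty) every choice of $i \in S_p \cap S_q$ gives the same value, which I denote $c_{pq}$. Hence $X_S$ lies in the affine subspace $L$ defined by $x_p - x_q = c_{pq}$ for every edge $\{p,q\}$ of $G_S$. Inside each connected component every coordinate is determined by that of a fixed representative vertex (these equations are consistent because $X_S \ne \emptyset$), while distinct components impose no relations on one another; so $\dim L = m$, the number of connected components, and therefore $\dim X_S \le m$.

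For the lower bound I would use that $X_S$ is a \emph{face}, that is, $S = \type(x_0)$ for some $x_0 \in X$; in particular $X_S \ne \emptyset$. The crucial point is that the only defining inequalities tight at $x_0$ are those forcing the equalities $x_p - x_q = c_{pq}$ --- i.e. the ones defining $L$. Indeed, suppose the inequality indexed by $(i,p,q)$ with $i \in S_p$ is tight at $x_0$, so $x_{0p} - v_{ip} = x_{0q} - v_{iq}$. Since $i \in S_p$ we have $x_{0p} - v_{ip} = \min_r (x_{0r} - v_{ir})$, so $x_{0q} - v_{iq}$ also attains this minimum, whence $i \in \type(x_0)_q = S_q$; thus $i \in S_p \cap S_q$ and $\{p,q\}$ is an edge of $G_S$, so this inequality is one of the equations defining $L$. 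Consequently, on a sufficiently small neighbourhood of $x_0$ every other defining inequality holds strictly, so there $X_S$ coincides with a relatively open subset of $L$. Hence $\dim X_S \ge \dim L = m$, and combining with the upper bound gives the claimed formula.

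The step I expect to be the main obstacle is the lower bound, and within it the assertion that no defining inequality joining two \emph{distinct} components of $G_S$ can be tight at $x_0$. This is precisely where one must use that $S$ genuinely occurs as the type of a point (equivalently, that $X_S$ is a face of $X$, rather than an arbitrary nonempty $X_S$): already for $n = k = 2$, $S = (\{1\},\{2\})$ with $v_{11} - v_{12} = v_{21} - v_{22}$, the set $X_S$ is a single translate of the line $\real(1,1)$, of dimension $1$, while $G_S$ has $2$ connected components --- and, consistently with the lemma, in that degenerate situation no point actually has type $S$.
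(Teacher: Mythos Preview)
The paper does not prove this lemma at all: it is quoted verbatim as \cite[Proposition~17]{Develin04} and used as a black box. So there is no ``paper's own proof'' to compare against here; your proposal is effectively a reconstruction of the argument from Develin--Sturmfels.

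That said, your argument is correct and is essentially the one in \cite{Develin04}: describe $X_S$ as the ordinary polyhedron cut out by the inequalities $x_p - x_q \le v_{ip} - v_{iq}$ (for $i \in S_p$), observe that an edge $\{p,q\}$ in $G_S$ forces the equality $x_p - x_q = v_{ip} - v_{iq}$, and then show that at a point $x_0$ with $\type(x_0) = S$ no other defining inequality is tight. Your verification of this last point --- that a tight inequality with $i \in S_p$ forces $i \in \type(x_0)_q = S_q$ and hence an edge in $G_S$ --- is the crux, and it is right.

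You are also right to flag that the lower bound genuinely needs $S$ to be a \emph{realised} type (i.e.\ $X_S$ to be a face), not merely $X_S \neq \emptyset$; your $n=k=2$ example with the two generators tropically parallel is a valid witness. This is consistent with how the present paper actually invokes the lemma: every application (in Theorem~\ref{thm_projimppure}, Lemma~\ref{lemma_uniqueface}, and Theorem~\ref{thm_pureimpproj}) is to a set $X_{\type(y)}$ with $y \in X$, so the face hypothesis is always in force.
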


It is easily seen that a polytope has pure
dimension $k$ if and only if every point lies inside a closed face of
dimension $k$.

Let $E \in M_n(\ft)$ be an idempotent matrix with columns
$v_1, \dots, v_n$ (so that $v_{ij} = E_{ji}$ for all $i, j$). We
shall show that the column space $C_\ft(E)$ has pure dimension. To do
this we need some lemmas.

\begin{lemma}\label{lemma_relation}
Let $E, v_1, \dots, v_n$ be as above, and let $i, j \in \lbrace 1, \dots, n \rbrace$ be such that
\begin{itemize}
\item $v_i$ and $v_j$ are extremal points in $C_\ft(E)$;
\item $v_j$ is not a multiple of $v_i$; and
\item $v_{ii} = v_{jj} = 0$.
\end{itemize}
Then for every $k \in \lbrace 1, \dots n \rbrace$ we have
$$v_{ji} - v_{ii} = v_{ji} \leq v_{jk} - v_{ik},$$
and in the case $k = j$ this inequality is strict.
\end{lemma}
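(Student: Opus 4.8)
The plan is to extract the inequalities directly from idempotency of $E$, exactly as in the proof of Lemma~\ref{lemma_plentyzeros}, but keeping track of all coordinates rather than just the diagonal one. Write out the equation $E^2 = E$ in the form that expresses each column of $E$ as a tropical linear combination of the columns: for the column $v_j$ we have
$$v_j = \bigoplus_{p=1}^n E_{pj}\, v_p = \bigoplus_{p=1}^n (v_j)_p\, v_p = \bigoplus_{p=1}^n v_{jp}\, v_p,$$
using $v_{pj} = E_{jp}$. Since $v_j$ is extremal in $C_\ft(E)$, it must equal one of the summands, say $v_j = v_{jm} v_m$ for some $m$; in particular $v_j$ is a scaling of $v_m$. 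If $m = i$ this would say $v_j$ is a multiple of $v_i$, contradicting the hypothesis; and if $v_m$ is a multiple of some third extremal column, one reduces to that representative. The upshot is that we may take $m = j$, i.e. $v_{jj} = 0$ (which is assumed) and the term with $p = j$ is the one realizing the maximum in every coordinate: $v_{jk} = v_{jj} v_{jk} = \bigoplus_p v_{jp} v_{pk} \geq v_{ji} v_{ik}$ for each $k$. Rearranging this single inequality $v_{ji} v_{ik} \leq v_{jk}$, written additively and using $v_{ii} = 0$, gives precisely
$$v_{ji} - v_{ii} = v_{ji} \leq v_{jk} - v_{ik}$$
for all $k$, which is the displayed inequality.

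For the strictness when $k = j$: here the claimed inequality reads $v_{ji} < v_{jj} - v_{ij} = -v_{ij}$, i.e. $v_{ji} + v_{ij} < 0$. Suppose for contradiction that equality holds, so $v_{ji} + v_{ij} = 0$ (it cannot be positive, by the $k=j$ case of the non-strict inequality just proved). Combined with the non-strict inequalities $v_{ji} \leq v_{jk} - v_{ik}$ for all $k$ — and the symmetric ones obtained by swapping $i$ and $j$, namely $v_{ij} \leq v_{ik} - v_{jk}$ — one gets $v_{jk} - v_{ik} \geq v_{ji} = -v_{ij} \geq v_{jk} - v_{ik}$, forcing $v_{jk} - v_{ik} = v_{ji}$ for every $k$. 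This says $v_j = v_{ji} v_i$, i.e. $v_j$ is a scaling of $v_i$, contradicting the second hypothesis. Hence the inequality is strict at $k = j$.

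The main obstacle is the bookkeeping around extremality: one must be careful that when $v_j$ equals one of the summands $v_{jp} v_p$, the index $p$ can be taken to be $j$ itself (after replacing by a scaled representative if the realizing column happens to be a multiple of $v_j$), and that this is compatible with the normalization $v_{jj} = 0$. Once that is pinned down, everything else is a one-line rearrangement of a single coordinate inequality coming from matrix multiplication, together with the short symmetric argument for strictness. No appeal to the geometric results of Section~\ref{sec_geometry} is needed; this lemma is purely a computation with the idempotent matrix.
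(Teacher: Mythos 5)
Your proof is correct and follows essentially the same route as the paper: the displayed inequality is just the $(k,j)$ entry of $E^2 = E$, namely $v_{jk} = (E^2)_{kj} \geq E_{ki}E_{ij} = v_{ik} + v_{ji}$, and your strictness argument (equality would force $v_j = v_{ji}v_i$ via the symmetric family of inequalities with $i$ and $j$ swapped) is exactly the paper's. The detour through extremality and the realizing index $m$ in your first paragraph is superfluous --- the inequality only requires retaining the single term $p=i$ of the tropical sum, not identifying which term attains the maximum --- but it does no harm.
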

\begin{proof}
For any $k$, computing the $(k,j)$ entry of $E^2$, we see that
$$v_{jk} = E_{kj} = (E^2)_{kj} \geq E_{ki} E_{ij} = v_{ik} + v_{ji}$$
which, since $v_{ii} = 0$, yields
$$v_{ji} - v_{ii} = v_{ji} \leq v_{jk} - v_{ik}$$
as required.

Now let $k = j$, and suppose for a contradiction that the
inequality is not strict, that is, that
$$v_{ji} = v_{jk} - v_{ik} = v_{jj} - v_{ij}.$$
Since $v_{jj} = 0$, the above equation
yields $v_{ji} = - v_{ij}$. Now for any index $p \in \lbrace 1,
\dots, n \rbrace$ by the above we have
$$v_{jp} - v_{ip} \geq v_{ji}.$$
By symmetry of assumption, we may also apply a corresponding
inequality with $i$ and $j$ exchanged, which yields
$$v_{ip} - v_{jp} \geq v_{ij} = -v_{ji}$$
and hence by negating both sides
$$v_{jp} - v_{ip} \leq v_{ji}.$$
So $v_{ji} = v_{jp} - v_{ip}$ for all $p$, which means that
$v_j = v_{ji} v_i$. But this contradicts the
hypothesis that $v_j$ is not a scalar multiple of $v_i$.
\end{proof}

\begin{lemma}\label{lemma_everygenerator}
Let $E, v_1, \dots, v_n$ be as above, and let $J \subseteq \lbrace 1, \dots, n \rbrace$ be such that the
corresponding columns form a set of unique representatives for the
extremal points of $C_\ft(E)$, and $v_{jj} = 0$ for every $j \in J$.
Let $x \in C_\ft(E)$, and $j \in J$. Then $j \in \type(x)_j$.
\end{lemma}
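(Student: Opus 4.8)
The plan is to lean entirely on the idempotency of $E$, using essentially nothing about $J$ beyond the normalisation $v_{jj}=0$. Fix $x \in C_\ft(E)$ and $j \in J$. The first step is to observe that $E$ fixes every vector in its own column space: since $E^2 = E$ one has $E v_i = v_i$ for each column (computing the $q$th entry, $(E v_i)_q = \bigoplus_p E_{qp} v_{ip} = \bigoplus_p E_{qp} E_{pi} = (E^2)_{qi} = E_{qi} = (v_i)_q$), and hence $Ex = x$ by linearity, writing $x$ as a linear combination of the $v_i$.

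Next I would write out $Ex = x$ coordinatewise. Recalling that $E_{qp} = v_{pq}$, for each $q \in \{1,\dots,n\}$ this gives
$$x_q = (Ex)_q = \bigoplus_{p=1}^n E_{qp} \otimes x_p = \bigoplus_{p=1}^n (v_{pq} + x_p) \;\geq\; v_{jq} + x_j,$$
where the last inequality simply retains the single term $p = j$ from the supremum. Rearranging, $x_j + v_{jq} \leq x_q$ for every $q$; equivalently, with $\lambda := x_j \in \ft$, we have $\lambda v_j \leq x$.

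Finally I would compare this with the definition of type. Since $v_{jj}=0$, the same scalar $\lambda = x_j$ also satisfies $\lambda v_{jj} = x_j$. So there is a $\lambda \in \ft$ (namely $x_j$) with $\lambda v_j \leq x$ and $\lambda v_{jj} = x_j$, which is precisely the statement $j \in \type(x)_j$, the type being computed with respect to the generating set $\{v_i : i \in J\}$ of $C_\ft(E)$ (and $j \in J$, so $j$ is a legitimate index; the argument is in any case insensitive to whether one uses the $v_i$ with $i \in J$ or all $n$ columns).

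I do not expect a genuine obstacle here: the computation is a two-line consequence of $E^2 = E$, and the only thing to keep straight is the bookkeeping of indices and of which generating set the type refers to. The extremality hypotheses on the columns indexed by $J$ are not needed for this particular assertion; together with Lemma~\ref{lemma_plentyzeros} they serve only to guarantee that such a set $J$ exists and is the generating set that will be used in the purity argument to follow.
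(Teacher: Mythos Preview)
Your argument is correct and is genuinely more direct than the paper's. The paper proceeds by contradiction: it writes $x$ as a maximal combination $\bigoplus_{i\in J}\lambda_i v_i$, assumes $\lambda_j v_{jj}<x_j$, picks $k\in J$ with $\lambda_k v_{kj}=x_j$, and then derives from the maximality of $\lambda_j$ an inequality that contradicts Lemma~\ref{lemma_relation} (applied to the extremal columns $v_k$ and $v_j$). In particular, the paper's route consumes the full hypotheses on $J$, since Lemma~\ref{lemma_relation} requires $v_k$ and $v_j$ to be extremal, distinct up to scaling, and to have $v_{kk}=v_{jj}=0$.

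Your approach bypasses all of that: from $E^2=E$ you get $Ex=x$, read off $x_j+v_{jq}\le x_q$ for all $q$ from the $p=j$ term of $(Ex)_q$, and use $v_{jj}=0$ to identify the witnessing scalar $\lambda=x_j$. This uses only idempotency and the normalisation $v_{jj}=0$; the extremality and uniqueness conditions on $J$ are, as you say, irrelevant here and only matter for how this lemma is deployed in Theorem~\ref{thm_projimppure}. The paper's detour through Lemma~\ref{lemma_relation} is not wrong, just unnecessarily heavy for this particular statement; that lemma earns its keep elsewhere (in Lemma~\ref{lemma_middlepoint}), but for Lemma~\ref{lemma_everygenerator} your two-line verification is preferable.
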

\begin{proof}
Suppose for a contradiction that
$j \notin \type(x)_j$. Write
$$x = \bigoplus_{i \in J} \lambda_i v_i$$
with the $\lambda_i$ maximal. The fact that $j \notin \type(x)_j$
means precisely that
$\lambda_j v_{jj} < x_j$.

By definition of the sum, there is a $k \in J$ such that
$$\lambda_k v_{kj} = x_j > \lambda_j v_{jj}$$
and by the above $k \neq j$. Rearranging, we obtain
$$v_{kj} - v_{jj} > \lambda_j - \lambda_k.$$
On the other hand, by maximality of the $\lambda_i$'s, there is a
$p$ such that $\lambda_j v_{jp} = x_p$. Then certainly we have
$$\lambda_k v_{kp} \leq x_p = \lambda_j v_{jp}$$
which combining with the above yields
$$v_{kp} - v_{jp} \leq \lambda_j - \lambda_k < v_{kj} - v_{jj},$$
contradicting Lemma~\ref{lemma_relation} applied to columns $v_k$ and $v_j$.
\end{proof}

\begin{lemma}\label{lemma_middlepoint}
Let $E, v_1, \dots, v_n$ be as above and let $J \subseteq \lbrace 1, \dots, n \rbrace$ be such that the
corresponding columns form a set of unique representatives for the
extremal points of $C_\ft(E)$, and $v_{jj} = 0$ for all $j \in J$. Let
$x \in C_\ft(E)$. Then there is an element $y \in C_\ft(E)$ such that
$\type(y) \subseteq \type(x)$ and $\type(y)$ is a vector of
singletons.
\end{lemma}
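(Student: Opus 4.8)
The plan is to argue by induction on $N(x) := \sum_{p=1}^{n}|\type(x)_p|$, all types being taken relative to the generating set $\{v_j : j\in J\}$ of $C_\ft(E)$. Since $x\in C_\ft(E)$ every component $\type(x)_p$ is non-empty, so $N(x)\geq n$, with equality precisely when $\type(x)$ is a vector of singletons; in that case we simply take $y=x$. Otherwise $|\type(x)_p|\geq 2$ for some $p$, and it suffices to exhibit $y'\in C_\ft(E)$ with $\type(y')\subseteq\type(x)$ and $N(y')<N(x)$: applying the inductive hypothesis to $y'$ then yields the required $y$, with $\type(y)\subseteq\type(y')\subseteq\type(x)$. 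Throughout, write $x=\bigoplus_{j\in J}\lambda_j v_j$ with the coefficients $\lambda_j$ maximal; by Lemma~\ref{lemma_everygenerator}, applied with the index $j$ for each $j\in J$, we have $\lambda_j = x_j$.

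Fix an index $i\in\type(x)_p$. I would take $y_\varepsilon$ to be the vector obtained from $x$ by raising the coefficient of $v_i$ from $\lambda_i$ to $\lambda_i+\varepsilon$, that is, $y_\varepsilon = x\oplus(\lambda_i+\varepsilon)v_i$; this lies in $C_\ft(E)$ since that set is closed under $\oplus$ and scaling. Put $Q_l := \{q : l\in\type(x)_q\}$ for $l\in J$. For all sufficiently small $\varepsilon>0$ (there being only finitely many threshold conditions to satisfy) a direct computation gives $(y_\varepsilon)_q = x_q+\varepsilon$ when $q\in Q_i$ and $(y_\varepsilon)_q = x_q$ otherwise, so $y_\varepsilon\to x$ as $\varepsilon\to 0$; since each index lying outside $\type(x)_q$ is excluded there by a strict inequality that persists under small perturbations, this forces $\type(y_\varepsilon)\subseteq\type(x)$. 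Working out the maximal coefficient of each $v_l$ in $y_\varepsilon$ --- it equals $\lambda_l+\varepsilon$ if $Q_l\subseteq Q_i$ and $\lambda_l$ otherwise --- one then finds, at the coordinate $p$ (where $i\in\type(x)_p$, so $p\in Q_i$), that $$\type(y_\varepsilon)_p = \{\,l\in\type(x)_p : Q_l\subseteq Q_i\,\}.$$

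To make this a proper subset of $\type(x)_p$, I would choose $i\in\type(x)_p$ with $Q_i$ minimal under inclusion among $\{Q_l : l\in\type(x)_p\}$; then $Q_l\subseteq Q_i$ forces $Q_l=Q_i$. I claim $Q_l=Q_i$ is impossible when $l\neq i$. Indeed, Lemma~\ref{lemma_everygenerator} gives $l\in\type(x)_l$ and $i\in\type(x)_i$, so $l\in Q_l=Q_i$ and $i\in Q_i=Q_l$; unwinding the definitions of these memberships and using $v_{ii}=v_{ll}=0$ together with $\lambda_i=x_i$ and $\lambda_l=x_l$ yields $\lambda_i+v_{il}=\lambda_l$ and $\lambda_l+v_{li}=\lambda_i$, hence $v_{il}+v_{li}=0$. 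But $v_i$ and $v_l$ are distinct representatives of extremal points of $C_\ft(E)$, so $v_l$ is not a multiple of $v_i$, and Lemma~\ref{lemma_relation} applied to $i,l$ gives $v_{li}<v_{ll}-v_{il}=-v_{il}$, that is, $v_{il}+v_{li}<0$, a contradiction. Hence $\type(y_\varepsilon)_p=\{i\}$, a proper subset of $\type(x)_p$, while $\type(y_\varepsilon)_q\subseteq\type(x)_q$ for every $q$; so $y':=y_\varepsilon$ satisfies $N(y')<N(x)$ and the induction closes.

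The main obstacle I anticipate is the explicit description of $\type(y_\varepsilon)$: one must track precisely how the maximal coefficients of $y_\varepsilon$ shift under the perturbation --- this is where the combinatorial data $Q_l$ enter --- and then recognise that Lemma~\ref{lemma_relation} is exactly what excludes the degenerate configuration ($Q_l=Q_i$ for some $l\neq i$ in $\type(x)_p$) in which the type would fail to shrink. The remaining ingredients --- that $C_\ft(E)$ is closed under the operations used, and that the finitely many strict inequalities cutting out the types are stable near $x$ --- are routine.
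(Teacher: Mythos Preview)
Your proof is correct and follows essentially the same approach as the paper's: both decrease a combinatorial invariant measuring how far the type is from being a vector of singletons by perturbing $x$ to $x\oplus(\lambda_i+\varepsilon)v_i$ and invoking Lemmas~\ref{lemma_everygenerator} and~\ref{lemma_relation} to control the resulting type. The paper tracks the number of triples $(i,j,p)$ with $i\neq j$ both in $\type(z)_p$ and selects which index to perturb via the dichotomy in Lemma~\ref{lemma_relation}, whereas you track $N(x)=\sum_p|\type(x)_p|$, compute $\type(y_\varepsilon)$ more explicitly via the sets $Q_l$, and choose $i$ by minimality of $Q_i$; these are organisational differences rather than a genuinely different argument.
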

\begin{proof}
For every vector $y \in \ft^n$, define
$$T_y = \lbrace (i,j,p) \in J \times J \times \lbrace 1, \dots, n \rbrace \mid i \neq j \text{ and } i,j \in \type(y)_p \rbrace.$$
If $y \in C_\ft(E)$ then, as discussed above, the components of $\type(y)$
are non-empty, so $\type(y)$ is a vector of singletons exactly
if $T_y$ is empty. Thus, the claim to be proven is that there is a
vector $y \in C_\ft(E)$ with $\type(y) \subseteq \type(x)$ and $T_y$ empty.
Suppose false for a contradiction, and choose $z \in C_\ft(E)$ such
that $\type(z) \subseteq \type(x)$, and the cardinality of $T_z$ is
minimal amongst vectors having this property.

Write
$$z = \bigoplus_{i \in J} \lambda_i v_i$$
with the $\lambda_i$'s maximal. By the supposition, $T_z$ is
non-empty, so we may choose some $(i,j,p) \in T_z$. Then by the
definition of types we have
$$\lambda_i v_{ip} = z_p = \lambda_j v_{jp}.$$

Notice that we cannot have both
$$\lambda_i v_{ij} = z_j \text{ and } \lambda_j v_{ji} = z_i.$$
Indeed, by Lemma~\ref{lemma_everygenerator} we have $\lambda_i v_{ii}
= z_i$ and $\lambda_j v_{jj} = z_j$, so we would have
$$v_{jj} - v_{ij} = \lambda_i - \lambda_j = v_{ji} - v_{ii}$$
which contradicts the strict inequality guaranteed by
Lemma~\ref{lemma_relation}. Thus, by exchanging $i$ and $j$ if
necessary, we may assume without loss of generality that
$\lambda_i v_{ij} < z_j$.

Now choose an $\varepsilon > 0$ such that
$$\varepsilon < z_q - \lambda_k v_{kq}$$
for all $k \in J$ and $q \in \lbrace 1, \dots, n \rbrace$ such
that $\lambda_k v_{kq} \neq z_q$. (Notice that by the definition
of the $\lambda_k$ we can never have $z_q < \lambda_k v_{kq}$, so
the condition $\lambda_k v_{kq} \neq z_q$ is sufficient to make
$z_q - \lambda_k v_{kq}$ positive.)

Define
$$y = z \oplus (\lambda_i + \varepsilon) v_i.$$
Since $z \in C_\ft(E)$ and $v_i$ is a column of $E$, we have $y \in C_\ft(E)$. Write
$$y = \bigoplus_{k \in J} \mu_k v_k$$
with the $\mu_k$'s maximal. Notice that since $\lambda_i v_i \leq z$, we have
$$y = z \oplus (\lambda_i + \varepsilon) v_i = z \oplus \varepsilon (\lambda_i v_i) \leq \varepsilon z.$$
In other words, no coordinate of $y$ can exceed the corresponding
coordinate in $z$ by more than $\varepsilon$. It follows immediately
that
$$\mu_k \leq \lambda_k + \varepsilon$$
for all $k \in J$. It is also clear that $\mu_i = \lambda_i + \varepsilon$.

We claim that $\type(y) \subseteq \type(z)$. Indeed, suppose
$k \notin \type(z)_p$, that is, $\lambda_k v_{kp} < z_p$. Then by
the choice of $\varepsilon$, we have $\varepsilon < z_p - \lambda_k
v_{kp}$, that is, $\varepsilon \lambda_k v_{kp} < z_p$. So using
the previous paragraph we have
$$\mu_k v_{kp} \leq \varepsilon \lambda_k v_{kp} < z_p \leq y_p,$$
which means that $k \notin \type(y)_p$.

It follows immediately that $T_y \subseteq T_z$; we claim that the
containment is strict. Indeed, we know that $(i,j,p) \in T_z$;
suppose for a contradiction that it lies also in $T_y$, that is,
that $i, j \in \type(y)_p$. Then by definition we have
$$\mu_j v_{jp} = y_p = \mu_i v_{ip} = \varepsilon \lambda_i v_{ip} = \varepsilon \lambda_j v_{jp},$$
from which we deduce that $\mu_j = \lambda_j + \varepsilon$. Thus, using
Lemma~\ref{lemma_everygenerator}, we have
$$y_j = \mu_j v_{jj} = \varepsilon \lambda_j v_{jj} = \varepsilon z_j > z_j.$$
Since $y = z \oplus \varepsilon \lambda_i v_i$, the only way this can
happen is if
$$y_j = (\varepsilon \lambda_i v_i)_j = \varepsilon \lambda_i v_{ij}.$$
But then $\varepsilon \lambda_i v_{ij} = y_j = \varepsilon \lambda_j
v_{jj}$, so
$$\lambda_i v_{ij} = \lambda_j v_{jj} = z_j.$$
This contradicts our assumption that $\lambda_i v_{ij} < z_j$, and
so proves the claim that $(i,j,p) \notin T_y$. Thus, $T_y$ is a
strict subset of $T_z$, which contradicts the minimality
assumption on $T_z$, and completes the proof of the lemma.
\end{proof}

\begin{theorem}\label{thm_projimppure}
Let $E \in M_n(\ft)$ be an idempotent matrix of column generator
rank $r$. Then $C_\ft(E)$ has pure dimension $r$.
\end{theorem}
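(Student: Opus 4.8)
The plan is to reduce to the well-prepared situation of Lemmas~\ref{lemma_relation}--\ref{lemma_middlepoint}, and then use Lemma~\ref{lemma_graphdimension} to compute dimensions of faces. First I would fix a set $J \subseteq \lbrace 1, \dots, n \rbrace$ of unique representatives for the extremal points of $C_\ft(E)$; by Lemma~\ref{lemma_plentyzeros} (applied over $\ft$, or simply by scaling) we may arrange that $v_{jj} = 0$ for every $j \in J$, so that the hypotheses of Lemmas~\ref{lemma_everygenerator} and \ref{lemma_middlepoint} are met. Note $|J| = r$, the column generator rank. Throughout I would work with types taken with respect to the generating set $\lbrace v_j : j \in J\rbrace$, so that $C_\ft(E)$ consists exactly of the vectors all of whose type-components are non-empty.

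The key point is to show $C_\ft(E)$ has pure dimension $r$, which (as remarked after Lemma~\ref{lemma_graphdimension}) amounts to showing every $x \in C_\ft(E)$ lies in a closed face $X_S$ with $G_S$ having $r$ connected components, equivalently with $G_S$ totally disconnected on the $r$ ``active'' coordinates — or rather, I should phrase it so that $X_S$ has tropical dimension $r$. Here is the mechanism. Given $x \in C_\ft(E)$, Lemma~\ref{lemma_middlepoint} produces $y \in C_\ft(E)$ with $\type(y) \subseteq \type(x)$ and $\type(y)$ a vector of singletons; say $\type(y)_p = \lbrace \sigma(p) \rbrace$ for a function $\sigma : \lbrace 1, \dots, n\rbrace \to J$. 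Let $S = \type(y)$. Since $\type(y) \subseteq \type(x)$, the point $x$ lies in the face $X_S$, so it suffices to show $X_S$ has tropical dimension $r$, i.e. that $G_S$ has exactly $r$ connected components. Now $G_S$ has an edge between $p$ and $q$ iff $S_p \cap S_q \neq \emptyset$ iff $\sigma(p) = \sigma(q)$; so the connected components of $G_S$ are precisely the fibres of $\sigma$, and the number of components is $|\sigma(\lbrace 1,\dots,n\rbrace)|$, the size of the image of $\sigma$. By Lemma~\ref{lemma_everygenerator}, for each $j \in J$ we have $j \in \type(y)_j = \lbrace \sigma(j)\rbrace$, so $\sigma(j) = j$; hence $\sigma$ is surjective onto $J$ and its image has size exactly $|J| = r$. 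Therefore $G_S$ has $r$ components, so by Lemma~\ref{lemma_graphdimension} the face $X_S$ has tropical dimension $r$, and $x \in X_S$.

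To finish I would record two things. First, $X_S$ is a closed set of pure dimension $r$ (as noted when $X_S$ was defined, it is closed of pure dimension, and we have just computed that dimension to be $r$), and it is contained in $C_\ft(E)$ since every point of $X_S$ has all type-components non-empty (they contain those of $S$, which are non-empty). So every point of $C_\ft(E)$ lies in a closed $r$-dimensional face, which gives that $C_\ft(E)$ has pure dimension $r$. Second, I should note there genuinely \emph{is} a point of type exactly $S$ (so that $X_S$ is a face in the strict sense): $y$ itself has type $S$, so $X_S$ is a face of $C_\ft(E)$, though for the purity conclusion only the containment $x \in X_S \subseteq C_\ft(E)$ with $\dim X_S = r$ is needed.

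The main obstacle is essentially bookkeeping rather than a deep idea: one must be careful that the edge-relation of $G_S$ really is ``$\sigma(p) = \sigma(q)$'' — this uses that $S_p, S_q$ are singletons, which is exactly what Lemma~\ref{lemma_middlepoint} buys — and that the component count equals $|\mathrm{im}\,\sigma|$, which then equals $r$ via the fixed-point identity $\sigma(j) = j$ from Lemma~\ref{lemma_everygenerator}. The only other subtlety is the preliminary normalisation $v_{jj} = 0$ for $j \in J$, needed to invoke the two lemmas; this costs nothing since rescaling columns of an idempotent by the convention above (or directly by Lemma~\ref{lemma_plentyzeros}, noting extremal points may be rescaled freely) does not change the column space or the idempotency-derived inequalities we use.
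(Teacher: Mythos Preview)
Your proposal is correct and follows essentially the same route as the paper's proof: choose $J$ with $|J|=r$ and $v_{jj}=0$ via Lemma~\ref{lemma_plentyzeros}, apply Lemma~\ref{lemma_middlepoint} to get a singleton-type point $y$ below $x$, then use Lemma~\ref{lemma_everygenerator} to force each $j\in J$ into $\type(y)_j$ and count $r$ components in $G_{\type(y)}$ via Lemma~\ref{lemma_graphdimension}. Your phrasing in terms of the function $\sigma$ and its fibres is a slightly more explicit unpacking of what the paper compresses into the parenthetical ``one corresponding to each generator $v_i$'', but the argument is the same.
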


\begin{proof}
Let $x \in C_\ft(E)$. It will suffice to show that $x$ lies in a face of
tropical dimension $r$.

Let $J \subseteq \lbrace 1, \dots, n \rbrace$ be such that the
corresponding columns form a set of unique representatives for the
extremal points of $C_\ft(E)$. Thus, $J$ has cardinality $r$. By Lemma~\ref{lemma_plentyzeros}, we
may choose $J$ so that $v_{jj} = 0$ for all $j \in J$. Now
consider types with respect to the generating set of $C_\ft(E)$ formed
by the columns corresponding to indices in $J$.

By Lemma~\ref{lemma_middlepoint} there is a point $y \in C_\ft(E)$
such that $\type(y) \subseteq \type(x)$ and $\type(y)$ is a vector of
singletons. By Lemma~\ref{lemma_everygenerator}, we have
$j \in \type(y)_j$ for every $j \in J$. It follows that the graph $G_{\type(y)}$
has exactly $r$ connected components (one corresponding to each generator
$v_i$).

Hence, by Lemma~\ref{lemma_graphdimension}, the face
$X_{\type(y)}$ has tropical dimension $r$. Moreover, since
$\type(y) \subseteq \type(x)$, it
follows from the definition of $X_{\type(y)}$ that $x$ lies in a face of
tropical dimension $r$, as required.
\end{proof}

\begin{lemma}\label{lemma_uniqueface}
Let $X$ be a tropical polytope in $\ft^n$ of generator dimension $n$ or
less. Then $X$ contains at most one face of dimension $n$.
\end{lemma}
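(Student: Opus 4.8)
The plan is to argue that a face of top dimension $n$ forces the type defining it to consist entirely of singletons, and then to show that any two such maximal-dimensional faces must actually coincide. Suppose $X$ is generated by vectors $v_1,\dots,v_k$ with $k \le n$, and let $X_S$ be a face of dimension $n$, where $S = \type(z)$ for some point $z \in X$. By Lemma~\ref{lemma_graphdimension}, $X_S$ has dimension equal to the number of connected components of $G_S$, so $G_S$ must have $n$ connected components; since $G_S$ has exactly $n$ vertices $\{1,\dots,n\}$, this means $G_S$ has no edges at all, i.e. $S_p \cap S_q = \emptyset$ for all $p \neq q$. Each $S_p$ is a non-empty subset of $\{1,\dots,k\}$ (non-empty because $z \in X$), so the sets $S_1,\dots,S_n$ are pairwise disjoint non-empty subsets of a set of size $k \le n$; hence $k = n$ and each $S_p$ is a singleton, say $S_p = \{\sigma(p)\}$, with $\sigma$ a permutation of $\{1,\dots,n\}$.

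First I would record what it means for a point $x$ to have type containing such an $S$: for each $p$, we need $\sigma(p) \in \type(x)_p$, i.e. $x_p - v_{\sigma(p),p} \le x_q - v_{\sigma(p),q}$ for all $q$. Writing $i = \sigma(p)$ and using that $\sigma$ is onto, this says precisely that for every generator $v_i$, the index $p = \sigma^{-1}(i)$ achieves the minimum of $x_q - v_{iq}$ over $q$; equivalently, in the expression $x = \bigoplus_i \lambda_i v_i$ with the $\lambda_i$ maximal (so $\lambda_i = \min_q (x_q - v_{iq})$), the coordinate where $\lambda_i v_i$ and $x$ agree is forced to be $p = \sigma^{-1}(i)$. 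The key observation is then that $X_S$ is cut out by the (closed) linear inequalities $x_q - x_p \ge v_{iq} - v_{ip}$ for all $i$ and all $q$, together with $x \in X$; and since each $v_i$ must "touch" $x$ at its designated coordinate $\sigma^{-1}(i)$, one can reconstruct a lot of structure.

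The main step, and the expected obstacle, is to rule out a second face $X_T$ of dimension $n$ with $T \neq S$. By the same reasoning $T_p = \{\tau(p)\}$ for a permutation $\tau$. The strategy is to show that the interiors (relative to $X$) of $X_S$ and $X_T$ must overlap, or more directly that $\sigma = \tau$. I would pick a point $x$ in the relative interior of $X_S$ — one whose type is \emph{exactly} $S$, which exists because $X_S$ has full dimension $n$ and the points of strictly larger type form a lower-dimensional subset. For such $x$, each $\lambda_i v_i$ agrees with $x$ in coordinate $\sigma^{-1}(i)$ and \emph{strictly} below $x$ in every other coordinate. Now perturb: move $x$ slightly in a generic direction staying inside $X$ (possible since $x$ is in an $n$-dimensional region). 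A small perturbation cannot create new type-membership, so the perturbed point still has type contained in $S$; but it lies in $X$, so its type equals $S$. This shows a full-dimensional neighbourhood of $x$ in $X$ lies in $X_S$, hence $X_S$ is (the closure of) a component of the top-dimensional locus of $X$. Running the same argument for $T$ and using connectedness — or simply that the union of the top faces covers an $n$-dimensional polytope that is itself connected in its interior — forces $X_S$ and $X_T$ to share an interior point, whence $S = T$. I expect the delicate point to be the clean justification that the relative interior of $X_S$ is non-empty and that a generic perturbation stays in $X$ while not enlarging the type; this is where one leans on the combinatorial description of faces from \cite{Develin04} and on compactness/finite generation of $X$.
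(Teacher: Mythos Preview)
Your setup is correct and matches the paper's: using Lemma~\ref{lemma_graphdimension} to deduce that any $n$-dimensional face $X_S$ must have $S$ consisting of singletons, hence $k=n$ and $S$ corresponds to a permutation. The paper does exactly this.

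However, your main step has a genuine gap. You argue that the relative interior of $X_S$ is open in $\ft^n$, that the same holds for $X_T$, and then invoke ``connectedness'' (of $X$, or of its top-dimensional locus) to force these interiors to meet. This does not follow. Connectedness of $X$ does \emph{not} imply that two full-dimensional cells share an interior point; a polyhedral complex built from two $n$-simplices glued along a facet is connected, has two top-dimensional cells, and those cells have disjoint interiors. Your argument uses nothing about tropical convexity beyond the existence of the type decomposition into cells, so if it worked it would prove that any connected polyhedral complex has at most one top-dimensional cell, which is false. The ``delicate point'' you flag (non-emptiness of the relative interior and stability under perturbation) is actually fine; the real obstacle is the connectedness step, which cannot be repaired along topological lines.

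The paper's proof is entirely different and algebraic. After normalising so that $S_p = \{p\}$ and $T_p = \{\sigma(p)\}$ for a nontrivial permutation $\sigma$, one picks witnesses $a \in X_S$, $b \in X_T$ with $\type(a)=S$, $\type(b)=T$, writes each as a maximal combination $a = \bigoplus \lambda_i v_i$, $b = \bigoplus \mu_i v_i$, and extracts from the type conditions the strict inequalities
\[
\lambda_i - \lambda_{\sigma(i)} \ > \ \mu_i - \mu_{\sigma(i)} \qquad \text{whenever } i \neq \sigma(i).
\]
Summing these along a nontrivial cycle of $\sigma$ makes both sides telescope to zero, giving $0 > 0$. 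This cycle-telescoping trick is the missing idea; it is what genuinely exploits the tropical (additive) structure of the type inequalities rather than just the cell decomposition.
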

\begin{proof}
Choose generators $v_1, \dots, v_n$ for $X$, and suppose for a contradiction
that $X_S$ and $X_T$ are distinct faces of dimension $n$.
By Lemma~\ref{lemma_graphdimension}, both $S$ and $T$
are $n$-tuples of singleton sets containing all the numbers from $1$ to $n$.
By reordering our chosen generating set if necessary we may thus assume that
$$S = (\lbrace 1 \rbrace, \lbrace 2 \rbrace, \dots, \lbrace n \rbrace)$$
while
$$T = (\lbrace \sigma(1) \rbrace, \lbrace \sigma(2) \rbrace, \dots, \lbrace \sigma(n) \rbrace)$$
for some permutation $\sigma$ of $\lbrace 1, \dots, n \rbrace$. Since $S$
and $T$ are distinct, $\sigma$ must be non-trivial.

Now choose points $a, b \in X$ with $\type(a) = S$ and $\type(b) = T$. Write
$$a = \bigoplus_{i=1}^n \lambda_i v_i \textrm{ and } b = \bigoplus_{i=1}^n \mu_i v_i$$
with the $\lambda_i$'s and $\mu_i$'s all maximal. By the definition of types,
for all $i$ we have
$$a_i = \lambda_i v_{ii} \geq \lambda_{\sigma(i)} v_{\sigma(i) i} \text{ and } \mu_i v_{ii} \leq \mu_{\sigma(i)} v_{\sigma(i) i} = b_i$$
and these inequalities are strict provided $i \neq \sigma(i)$. Rearranging
these, we get
$$\lambda_i - \lambda_{\sigma(i)} \geq v_{\sigma(i) i} - v_{ii} \geq \mu_i - \mu_{\sigma(i)}$$
and again, the inequalities are strict provided $i \neq \sigma(i)$.

Now since $\sigma$ is a non-trivial permutation of a finite set, it contains a non-trivial
cycle. In other words, there is a $p \in \lbrace 1, \dots, n \rbrace$ and
an integer $k \geq 2$ such
that $p \neq \sigma(p)$, but $p = \sigma^k(p)$. Note that, $\sigma^i(p) \neq \sigma^{i+1}(p)$ for
any $i$, so using our strict inequalities above we have
$$0 = \sum_{i = 1}^k (\lambda_{\sigma^i(p)} - \lambda_{\sigma^{i+1}(p)})
 > \sum_{i = 1}^k (\mu_{\sigma^i(p)} - \mu_{\sigma^{i+1}(p)}) = 0$$
giving the required contradiction.
\end{proof}

\begin{theorem}\label{thm_pureimpproj}
Suppose $X \subseteq \ft^n$ is a tropical polytope of generator dimension
$n$ and pure dimension $n$. Then $X$ is projective.
\end{theorem}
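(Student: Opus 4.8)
The plan is to read an idempotent matrix $M \in M_n(\ft)$ with $C_\ft(M) = X$ directly off the hypotheses, after which Theorem~\ref{thm_projchar2} gives projectivity at once. All of the work is combinatorial, in terms of Develin--Sturmfels types.

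First I would fix a generating set $v_1, \dots, v_n$ for $X$ (possible since $X$ has generator dimension $n$) and take types relative to it. Since $X$ has pure dimension $n$, every point of $X$ lies inside a closed face of tropical dimension $n$; and since $X$ has generator dimension $n$, Lemma~\ref{lemma_uniqueface} gives at most one face of dimension $n$. Hence there is a unique face $X_S$ of dimension $n$, and $X = X_S$. By Lemma~\ref{lemma_graphdimension} the graph $G_S$ has $n$ connected components, so, having only $n$ vertices, it has no edges at all; thus $S_p \cap S_q = \emptyset$ whenever $p \ne q$. As $S$ is the type of a point of $X$, all of its components are non-empty, and counting then forces each $S_p$ to be a singleton with $S_1, \dots, S_n$ partitioning $\{1, \dots, n\}$. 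Re-indexing the $v_i$ I may assume $S_p = \{p\}$ for all $p$, and since types are unchanged under rescaling of the generators I may additionally rescale each $v_i$ so that $v_{ii} = 0$.

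The key inequality is now immediate: because $X = X_S$, each $v_i$ has type containing $S$, i.e.\ $p \in \type(v_i)_p$ for every $p$, which (unwinding the definition of type and using $v_{pp} = 0$) is exactly the assertion that
$$v_{iq} \;\geq\; v_{ip} + v_{pq} \qquad \text{for all } i, p, q.$$
Let $M \in M_n(\ft)$ be the matrix whose $i$th column is $v_i$ (so $M_{ji} = v_{ij}$); then $C_\ft(M) = X$ by the choice of the $v_i$. To see that $M$ is idempotent, compute $(M^2)_{ji} = \max_k (v_{ik} + v_{kj})$: the term $k = j$ equals $v_{ij}$ because $v_{jj} = 0$, so $(M^2)_{ji} \geq M_{ji}$; and the displayed inequality with $(p,q) = (k,j)$ gives $v_{ik} + v_{kj} \leq v_{ij}$ for every $k$, so $(M^2)_{ji} \leq M_{ji}$. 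Thus $M^2 = M$, and $X = C_\ft(M)$ is projective by Theorem~\ref{thm_projchar2}.

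I expect the only genuinely delicate step to be the combinatorial reduction in the second paragraph: squeezing out of pure-dimensionality and the generator-dimension bound the rigidity that makes $X$ coincide with a single ``generic'' top-dimensional face $X_S$, $S$ being a transversal system of singleton sets. Everything afterwards --- the inequality and the check $M^2 = M$ --- is routine. (Alternatively one could deduce from the same inequality that $X$ is min-plus convex of dual dimension $n$ and then invoke Theorem~\ref{thm_order1}, but building the idempotent by hand is more economical and avoids re-establishing the dual-dimension bound.)
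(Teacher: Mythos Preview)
Your proposal is correct and follows essentially the same approach as the paper: use pure dimension together with Lemma~\ref{lemma_uniqueface} to locate a single top-dimensional face $X_S$ with $S$ a transversal system of singletons, then re-index and rescale the generators so that the resulting matrix is idempotent, and finish with Theorem~\ref{thm_projchar2}. The only cosmetic difference is that you observe $X = X_S$ outright, whereas the paper more modestly notes that each generator lies in $X_S$; either version yields the key inequality $v_{iq} \geq v_{ip} + v_{pq}$ and the idempotency check is then identical.
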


\begin{proof}
Let $u_1, \dots, u_n$ be a minimal generating set for $X$ (so that the
elements $u_i$ are unique representatives of the extremal points of $X$). Consider
the types of points in $X$ with respect to this generating set. Since $X$
has pure dimension, for each $i$, $u_i$ is contained in a closed face of
dimension $n$. It follows by Lemma~\ref{lemma_uniqueface} that all of the
$u_i$'s are contained in the \textbf{same} face of dimension $n$, say $X_S$
for some type $S$. Since $X_S$ is a face of $X$, the components of $S$ are
non-empty, so it follows by Lemma~\ref{lemma_graphdimension} that
$S$ consists of singletons and contains every $i$. By reordering the
$u_i$'s, we may assume without loss of generality that $S_{k} = \lbrace k \rbrace$
for all $k \in \lbrace 1, \dots, n \rbrace$.

Moreover, by scaling the $u_i$'s if necessary, we may assume that
$u_{ii} = 0$ for each $i$. Let $E \in M_n(\ft)$
be the matrix whose $i$th column is $u_i$. It is immediate that
$C_\ft(E) = X$, and from our rescaling of the $u_i$'s that the diagonal
entries of $E$ are $0$. We claim that $E$ is idempotent, that is,
$$(E^2)_{ij} = E_{ij}$$
for all $i, j \in \lbrace 1, \dots, n \rbrace$.

Fix $i, j \in \lbrace 1, \dots, n \rbrace$. From the definition of matrix
multiplication we have
$$(E^2)_{ij} = \bigoplus_{k=1}^n E_{ik} E_{kj}.$$
Since the diagonal entries of $E$ are $0$, we have
$$(E^2)_{ij} \ \geq \ E_{ii} E_{ij} \ = \ 0 E_{ij} \ = \ E_{ij}.$$

On the other hand, let
$k \in \lbrace 1, \dots, n \rbrace$. Recall that
$S_k = \lbrace k \rbrace$.
Since $u_j$
appears in the face $X_S$, by definition we have
$S \subseteq \type(u_j)$, and so $k \in \type(u_j)_k$. It follows
from the definition of types that
$$u_{jk} - u_{kk} \leq u_{ji} - u_{ki}.$$
But $u_{kk} = 0$ so rearranging yields $u_{ki} + u_{jk} \leq u_{ji}$
for all $k$. Thus
we have
$$(E^2)_{ij} \ = \ \bigoplus_{k=1}^n E_{ik} E_{kj} \ = \ \bigoplus_{k=1}^n u_{ki} + u_{jk} \ \leq \ \bigoplus_{k=1}^n u_{ji} \ = \ u_{ji} \ = \ E_{ij}.$$
as required to complete the proof of the claim that $E$ is idempotent.

Thus, $X$ is the column space of an idempotent matrix, so by
Theorem~\ref{thm_projchar2} we deduce that $X$ is projective.
\end{proof}

We are finally ready to prove Theorem~\ref{thm_geometric} from the introduction.

\begin{theorem11}
Let $X \subseteq \ft^n$ be a tropical polytope. Then $X$ is a projective $\ft$-module
if and only if it has pure dimension equal to its generator dimension and
its dual dimension.
\end{theorem11}
\begin{proof}
Suppose $X \subseteq \ft^n$ is a projective polytope. Then by Theorem~\ref{thm_equalrank},
there is a $k \leq n$ such that $X$ has generator dimension $k$ and dual dimension $k$.
Now by Theorem~\ref{thm_projchar}, $X$ is isomorphic to the column
space $C_\ft(E)$ of an idempotent matrix $E \in M_k(\ft)$.
It follows by Theorem~\ref{thm_projimppure} that $C_\ft(E)$ has pure dimension $k$.
Moreover, it is easy to see that a linear isomorphism of convex sets is a
homeomorphism with respect to the standard product topology inherited from
the real numbers. Indeed, an isomorphism is a bijection, and both it and its
inverse are continuous because addition and multiplication in $\ft$ are continuous. Since pure dimension is an
abstract topological property it follows that $X$ has pure dimension $k$.

Conversely, suppose $X$ has pure dimension, generator dimension and dual
dimension all equal to $k$. Then by Theorem~\ref{thm_dualrankembedding}, $X$ is isomorphic
to a convex set $Y \subseteq \ft^k$. Now $Y$ also has generator dimension $k$
and, by the same argument as above, pure dimension $k$ so by Theorem~\ref{thm_pureimpproj},
$Y$ is projective, and so $X$ is projective.
\end{proof}

\section{Examples}\label{sec_examples}

In this section we collect together some examples of tropical polytopes
in low dimension, and show how the concepts and results of this paper
apply to them.

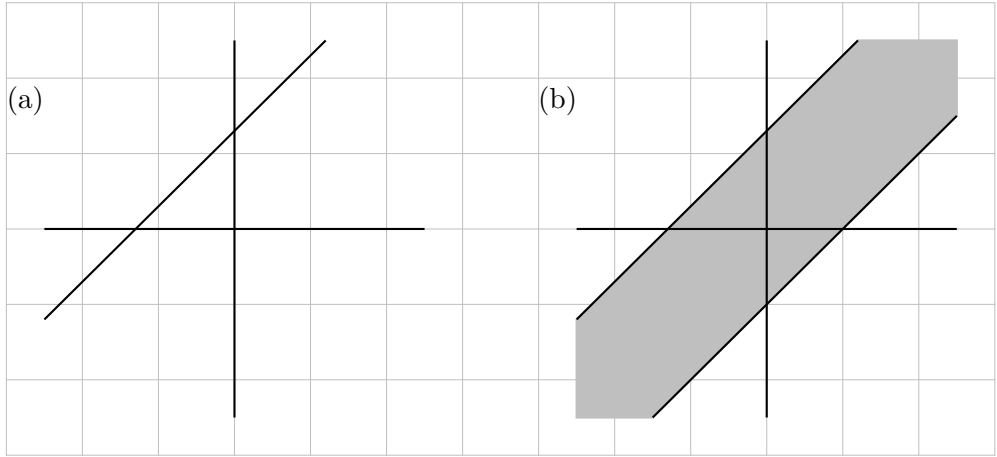
\begin{figure}[h]
\begin{pspicture}(13,6)
\psgrid[gridcolor=lightgray, gridwidth=0.25pt, gridlabels=0pt, subgridwidth=0.25pt, subgriddiv=1](0,0)(13,6)
\rput(0.25,4.7){(a)}
\qline(0.5,3)(5.5,3)
\qline(3,0.5)(3,5.5)
\qline(0.5,1.8)(4.2,5.5)
\rput(7.25,4.7){(b)}
\pspolygon[fillstyle=solid, fillcolor=lightgray, linecolor=lightgray](7.5,1.8)(7.5,0.5)(8.5,0.5)(12.5,4.5)(12.5,5.5)(11.2,5.5)
\qline(7.5,3)(12.5,3)
\qline(10,0.5)(10,5.5)
\qline(7.5,1.8)(11.2,5.5)
\qline(8.5,0.5)(12.5,4.5)
\end{pspicture}
\caption{Polytopes in $\ft^{2}$.}
\label{fig_twodimensions}
\end{figure}

We consider first the (somewhat degenerate) $2$-dimensional case.
It is well known and easily seen that every polytope in $\ft^2$ is
either (a) a line of gradient $1$, or (b) the closed region between two such
lines. Figure~\ref{fig_twodimensions} illustrates these possibilities. It
is readily verified that polytopes of type (a) have pure
dimension, generator dimension and dual dimension all equal to $1$,
while those of type (b) have pure dimension, generator dimension and
dual dimension all equal
to $2$. We deduce by Theorem~\ref{thm_geometric} that every tropical polytope in
$\ft^2$ is projective. By
Corollary~\ref{cor_regular}, we recover the fact (proved by explicit
computation in \cite{K_tropicalgreen}) that every $2 \times 2$ tropical
matrix is von Neumann regular, that is, that the semigroup of all such
matrices is a regular semigroup. It also follows by
Corollary~\ref{cor_equalrank} that the various notions of rank discussed
in the introduction all coincide for $2 \times 2$ matrices.

Recall that from affine tropical $n$-space we obtain projective tropical
$(n-1)$-space, denoted $\mathbb{PFT}^{(n-1)}$, by identifying two vectors
if one is a tropical multiple of the other by an element of $\mathbb{FT}$.
Thus we may identify $\pft^{n-1}$ with $\mathbb{R}^{n-1}$ via the map
\begin{equation}
(x_1, \dots, x_n) \mapsto (x_1 - x_n, x_2 - x_n, \dots, x_{n-1} - x_n).
\end{equation}
Each convex set $X \subseteq \ft^n$ induces a subset of the corresponding
projective space, termed the \emph{projectivisation} of $X$.

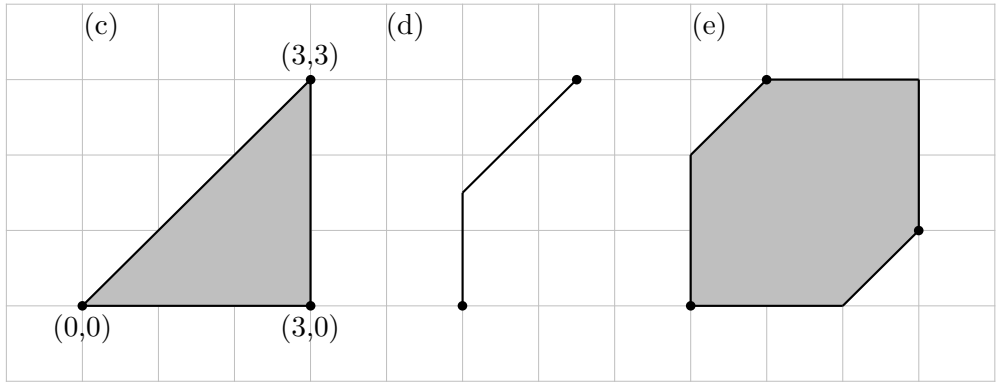
\begin{figure}[h]
\begin{pspicture}(13,5)
\psgrid[gridcolor=lightgray, gridwidth=0.25pt, gridlabels=0pt, subgridwidth=0.25pt, subgriddiv=1](0,0)(13,5)
\rput(1.25,4.7){(c)}
\pspolygon[fillstyle=solid, fillcolor=lightgray, linecolor=lightgray](1,1)(4,1)(4,4)
\qline(1,1)(4,1)
\qline(1,1)(4,4)
\qline(4,1)(4,4)
\psdots*(1,1)(4,1)(4,4)
\rput(1,0.7){(0,0)}
\rput(4,0.7){(3,0)}
\rput(4,4.3){(3,3)}
\rput(5.25,4.7){(d)}
\qline(6,2.5)(7.5,4)
\qline(6,2.5)(6,1)
\psdots*(7.5,4)(6,1)
\rput(9.25,4.7){(e)}
\pspolygon[fillstyle=solid, fillcolor=lightgray, linecolor=lightgray](9,3)(10,4)(12,4)(12,2)(11,1)(9,1)
\qline(9,1)(9,3)
\qline(9,3)(10,4)
\qline(10,4)(12,4)
\qline(12,4)(12,2)
\qline(12,2)(11,1)
\qline(11,1)(9,1)
\psdots*(10,4)(12,2)(9,1)
\end{pspicture}
\caption{Some projective tropical polytopes in $\pft^{2}$.}
\label{fig_projective}
\end{figure}

All three polytopes shown in Figure~\ref{fig_projective} have pure dimension.
Polytopes (c) and (e) have tropical dimension $3$, generator dimension
$3$ and dual dimension $3$, while polytope (d) has tropical dimension
$2$, generator dimension $2$ and dual dimension $2$, and so by
Theorem~\ref{thm_geometric} they are all projective. By Theorem~\ref{thm_order1}
polytopes (c) and (e) must be min-plus (as well as max-plus) convex, and
indeed this can be verified by inspection. Polytope (d) is not min-plus
convex, but by Theorem~\ref{thm_order2} it must be isomorphic to a polytope
in $\ft^2$ which \textit{is} min-plus convex; in fact it will be isomorphic
to something of the form shown in Figure~\ref{fig_twodimensions}(b).

By Corollary 1.2 we deduce that every matrix whose row space is one of
these polytopes must be von Neumann regular, and so there is at least
one idempotent matrix with each of these row spaces. In cases (c) and (e),
Theorem~\ref{thm_uniqueidpts} tells us that there is a unique such idempotent.
In case (d) Theorem~\ref{thm_uniqueidpts} does not apply (since the dimension is not maximal)
and in fact there are continuum-many such idempotents. The unique idempotent
in case (c) is
$$\left(\begin{array}{c c c}
0& -3& -3\\
0& 0&-3\\
0&0&0\end{array}\right).$$

\begin{figure}[h]
\begin{pspicture}(13,5)
\psgrid[gridcolor=lightgray, gridwidth=0.25pt, gridlabels=0pt, subgridwidth=0.25pt, subgriddiv=1](0,0)(13,5)
\rput(1.25,4.7){(f)}
\pspolygon[fillstyle=solid, fillcolor=lightgray, linecolor=lightgray](1,1)(3.5,1)(3.5,3.5)
\qline(1,1)(3.5,1)
\qline(1,1)(3.5,3.5)
\qline(3.5,0)(3.5,3.5)
\psdots*(1,1)(3.5,0)(3.5,3.5)
\rput(5.25,4.7){(g)}
\qline(6,2.5)(7.5,4)
\qline(6,2.5)(6,1)
\qline(5,2.5)(6,2.5)
\psdots*(5,2.5)(7.5,4)(6,1)
\rput(9.75,4.7){(h)}
\psframe[fillstyle=solid, fillcolor=lightgray, linecolor=lightgray](9,2)(12,4)
\psframe[fillstyle=solid, fillcolor=lightgray, linecolor=lightgray](10.5,1)(12,4)
\qline(9,2)(9,4)
\qline(9,4)(12,4)
\qline(12,4)(12,1)
\qline(12,1)(10.5,1)
\qline(10.5,1)(10.5,2)
\qline(10.5,2)(9,2)
\psdots*(9,2)(9,4)(10.5,1)(12,1)
\end{pspicture}
\caption{Some non-projective tropical polytopes in $\pft^{2}$.}
\label{fig_nonprojective}
\end{figure}
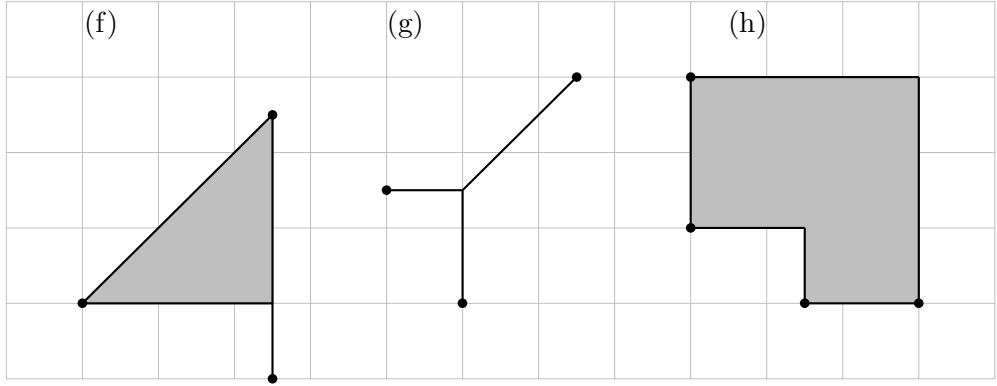

Figure~\ref{fig_nonprojective} illustrates three polytopes in $\ft^3$ which
fail to be projective for different reasons. Polytope (f) does not have
pure dimension, and so by Theorem~\ref{thm_geometric} cannot be projective.
Since the generator dimension and dual dimension are both equal to the
dimension of the ambient space, we may also deduce this from
Theorem~\ref{thm_order1} and the fact it is not min-plus convex.

Polytope (g), which is equivalent to \cite[Example
18]{Cohen06}, does have pure dimension, but its tropical dimension ($2$)
differs from its generator dimension and dual dimension (both $3$), and
hence by Theorem~\ref{thm_geometric} is not projective. Again, since
the generator dimension and dual dimension are both equal to the dimension
of the ambient space, non-projectivity also follows from
Theorem~\ref{thm_order1} and the lack of min-plus convexity.

Polytope (h) has pure dimension, but this time its tropical and dual
dimension ($3$) fail to agree with its generator dimension ($4$), so
again by Theorem~\ref{thm_geometric} it is not projective. In this case
Theorem~\ref{thm_order1} does not apply.
Note that if we choose a $4 \times 3$ matrix with row space polytope (h),
the \textit{column} space of this matrix will (by Proposition~\ref{prop_matrixdualrank})
yield an example of a polytope in $\ft^4$ with pure tropical dimension $3$,
generator dimension $3$ and dual dimension $4$.
\section*{Acknowledgement}
The authors thank the anonymous referee for their many useful comments, and in particular for drawing our attention to several related results in the literature.

\bibliographystyle{plain}

\end{document}